\tikzset{Rightarrow/.style={double equal sign distance,>={Implies},->},
triple/.style={-,preaction={draw,Rightarrow}},
quadruple/.style={preaction={draw,Rightarrow,shorten >=0pt},shorten >=1pt,-,double,double
distance=0.2pt}}
\newcolumntype{D}{>{\hfil$}p{1.2cm}<{$\hfil}}
\tikzset{%
    symbol/.style={%
        draw=none,
        every to/.append style={%
            edge node={node [sloped, allow upside down, auto=false]{$#1$}}}
    }
}
\tikzset{%
scalearrow/.style n args={3}{
  decoration={
    markings,
    mark=at position (1-#1)/2*\pgfdecoratedpathlength
      with {\coordinate (#2);},
    mark=at position (1+#1)/2*\pgfdecoratedpathlength
      with {\coordinate (#3);},
    },
  postaction=decorate,
  }
}
 \theoremstyle{plain}   % This is the default, anyway
\newtheorem{thm}{Theorem}[subsection] % numbered theorem
\let\c@thm\c@thm\makeatother
\let\c@cor\c@thm\makeatother
\newtheorem{lem}{Lemma}[subsection]
\let\c@lem\c@thm\makeatother
\newtheorem{prop}{Proposition}[subsection]
\let\c@prop\c@thm\makeatother
\let\c@claim\c@thm\makeatother
\newtheorem{conjecture}{Conjecture}[subsection]
\let\c@conjecture\c@thm\makeatother
\theoremstyle{definition}
\newtheorem{defn}{Definition}[subsection]
\let\c@defn\c@thm\makeatother
\newtheorem{const}{Construction}[subsection]
\let\c@const\c@thm\makeatother
\newtheorem{notn}{Notation}[subsection]
\let\c@notn\c@thm\makeatother
\let\c@convention\c@thm\makeatother
\newtheorem{question}{Question}[subsection]
\let\c@convention\c@thm\makeatother
\theoremstyle{remark}
\newtheorem{rmk}{Remark}[subsection]
\let\c@rmk\c@thm\makeatother
\newtheorem{ex}{Example}[subsection]
\let\c@ex\c@thm\makeatother
\let\c@observation\c@thm\makeatother
\let\c@warning\c@thm\makeatother
\newtheorem{digression}{Digression}[subsection]
\let\c@digression\c@thm\makeatother
\let\c@answ\c@thm\makeatother
\let\c@equation\c@thm
\numberwithin{equation}{subsection}
\crefname{lem}{Lemma}{Lemmas}
\crefname{thm}{Theorem}{Theorems}
\crefname{defn}{Definition}{Definitions}
\crefname{notn}{Notation}{Notations}
\crefname{const}{Construction}{Constructions}
\crefname{prop}{Proposition}{Propositions}
\crefname{rmk}{Remark}{Remarks}
\crefname{cor}{Corollary}{Corollaries}
\crefname{equation}{Display}{Displays}
\crefname{ex}{Example}{Examples}
\crefname{thmalph}{Theorem}{Theorems}
\crefname{answ}{Answer}{Answers}
\crefname{question}{Question}{Questions}
\newcommand{\cA}{\mathcal{A}}
\newcommand{\cB}{\mathcal{B}}
\newcommand{\cC}{\mathcal{C}}
\newcommand{\cD}{\mathcal{D}}
\newcommand{\cE}{\mathcal{E}}
\newcommand{\cI}{\mathcal{I}}
\newcommand{\cM}{\mathcal{M}}
\newcommand{\cP}{\mathcal{P}}
\newcommand{\cQ}{\mathcal{Q}}
\newcommand{\cS}{\mathcal{S}}
\newcommand{\cat}{\cC\!\mathit{at}}
\newcommand{\set}{\cS\!\mathit{et}}
\newcommand{\sset}{\mathit{s}\set}
\newcommand{\psh}[1]{\set^{#1^{\op}}}
\newcommand{\spsh}[1]{\sset^{#1^{\op}}}
\newcommand{\msset}{m\sset}
 \newcommand{\twocat}{2\cat}
 \newcommand{\omegacat}{\omega\cat}
 \newcommand{\comp}{\circ}
\DeclareMathOperator{\Hom}{Hom}
\DeclareMathOperator{\Map}{Map} 
\DeclareMathOperator{\Mor}{Mor}
\DeclareMathOperator{\Ob}{Ob}
\DeclareMathOperator{\core}{core}
\DeclareMathOperator{\colim}{colim}
\DeclareMathOperator{\id}{id}
\newcommand{\aamalg}[1]{\underset{#1}{{\amalg}}} %vo: for pushouts
\newcommand{\tttimes}[1]{\underset{{#1}}{\times}} %vo: for pullbacks
\newcommand{\ootimes}[1]{\overset{\infty}{\underset{{#1}}{\times}}} %vo: for infinity pullbacks
\DeclareMathOperator{\op}{op}
\DeclareMathOperator{\pr}{pr}
\newcommand{\chaos}{\mathrm{ch}}
\newcommand{\roughtrunc}[1]{\tau_{\leq #1}^{\text{b}}}
\newcommand{\inttrunc}[1]{\tau_{\leq #1}^{\text{i}}}
\newcommand{\term}{*}
\def\elseif{\elseif}\def\endif{\endif}
\def\ifnumcase#1#2{%
    \expandafter\ifx\expandafter\elseif\@car#2\@nil\expandafter\@firstoftwo
    \else
        \expandafter\ifx\expandafter\endif\@car#2\@nil\expandafter\expandafter\expandafter\@gobbletwo
        \else\expandafter\expandafter\expandafter\@secondoftwo
        \fi
    \fi
        \idto@endif
        {\if@eqin#2=\@nil{\if@dimwitheq{#1}#2\@nil}{\ifdim#1pt#2pt }
            \expandafter\@firstoftwo\else\expandafter\@secondoftwo\fi\exec@arg{\ifnumcase@i{#1}}%
        }%
    }
\def\ifnumcase@i#1#2{\ifnumcase{#1}}
\def\if@eqin#1=#2\@nil{%
    \ifx\@empty#2\@empty\expandafter\@secondoftwo
    \else
        \ifx\@empty#1\@empty\expandafter\expandafter\expandafter\@secondoftwo
        \else\expandafter\expandafter\expandafter\@firstoftwo
        \fi
    \fi}
\def\if@dimwitheq#1#2=#3\@nil{\unless\ifdim#1pt\if<#2>\else<\fi#3pt }
\def\exec@arg#1#2\endif{#1}
\def\idto@endif#1\endif{#1}
\def\gobto@endif#1\endif{}
\newcommand{\eq}[1]{
\IfInteger{#1}{
\ifnumcase{#1}
    {=1}{\mathcal{I}}
    {=2}{\mathcal{E}}
\elseif
  {{#1}\mathcal{E}}
  \endif
}
{
{{#1}\mathcal{E}}
}
}
\newcommand{\coheq}[1]{
\IfInteger{#1}{
\ifnumcase{#1}
    {=1}{\mathcal{I}}
    {=2}{\mathcal{E}^{\mathrm{adj}}}
\elseif
  {{#1}\mathcal{E}^{\mathrm{adj}}}
\endif
}{
{{#1}\mathcal{E}^{\mathrm{adj}}}
}
}
   \def\MR#1{}
\author{Viktoriya Ozornova}
\address{Max Planck Institute for Mathematics, Bonn, Germany}
\email{viktoriya.ozornova@mpim-bonn.mpg.de}
\author{Martina Rovelli}
\address{Department of Mathematics and Statistics,
University of Massachusetts, 
Amherst,
USA
}
\email{mrovelli@umass.edu}
\subjclass[2020]{18N10; 18N30; 18N60; 18N65; 18N40}
\title{What is an equivalence in a higher category?}
\begin{document}

\maketitle

\begin{abstract}
The purpose of this survey is to present in a uniform way the notion of equivalence between strict $n$-categories or $(\infty,n)$-categories, and inside a strict $(n+1)$-category or $(\infty,n+1)$-category.
\end{abstract}

\tableofcontents

\section*{Introduction}

Many theorems in mathematics are about identifying two types  of mathematical objects of interest that present themselves as seemingly different. While it is tempting to state such results by saying that two classes of mathematical objects are \emph{equal}, in practice -- rather than \emph{equality} -- the correct mathematical notion of sameness that encapsulates many such correspondences is that of a \emph{bijection}. For instance, here are some well-known examples of correspondences in math that are expressed by means of suitable bijections:
\begin{enumerate}[leftmargin=*]
   \item \emph{Galois correspondence}: Given a finite field extension $E/F$, there's a bijection between the set of intermediate field extensions of $E/F$ and the set of subgroups of the Galois group of $E/F$.
    \item \emph{Classification of covering spaces}: Given a nice pointed space $X$, there's a bijection between the set of isomorphism classes of pointed covering spaces of $X$ and the set of subgroups of the fundamental group of $X$.
    \item \emph{Stone duality}:
     There's a bijection between the large set of Boolean algebras and the large set of Boolean rings.
\end{enumerate}

We may notice at this point that each of the aforementioned examples is in fact the shadow of a stronger statement. For instance, in the Galois correspondence it also holds that every inclusion of intermediate field extensions of $E/F$ correspond to a (reverse) inclusion of the corresponding subgroups of the Galois group of $E/F$.

More generally, it if often the case that the mathematical objects of interest come with a relevant notion of morphisms between them, and naturally assemble into a \emph{category}. While a set only consists of elements, a category consists of elements called objects, as well as arrows or morphisms between objects, together with a unital and associative composition law.

It is then desirable to upgrade the correspondence statements to suitable notions of sameness for the corresponding categories. At a first glance, one could hope to generalize the notion of bijection of sets to that of \emph{isomorphism of categories}. Roughly speaking, an isomorphism of categories from $\cA$ to $\cB$ consists of functorial assignments $F\colon\cA\to\cB$ and $G\colon\cB\to\cA$, and is designed so that for all objects $a$ of $\cA$ one has that $a$ is \emph{equal} to $G(F(a))$, and similarly for $\cB$.

Here are some correspondences that can be expressed by means of isomorphisms of categories.

\begin{enumerate}[leftmargin=*]
\item \emph{Categorified Galois correspondence}: Given a finite field extension $E/F$, there's a contravariant isomorphism between the category of intermediate field extensions of $E/F$ and inclusions and the category of subgroups of the Galois group of $E/F$ and inclusions.
\item \emph{Universal property of group algebras}: Given a group $G$ and a field $K$, there is an isomorphism between the category of modules over the group algebra $K[G]$ and the category of representations of $G$. In a similar vein, there's an isomorphism between the category of abelian groups and the category of modules over $\mathbb Z$.
\end{enumerate}

While extremely natural and intuitive, the notion of isomorphism of categories is too strict, and fails at encompassing the majority of mathematical correspondences of interest. Roughly speaking, the issue is that in many situations one has functorial assignments $F\colon\cA\to\cB$ and $G\colon\cB\to\cA$, but 
given an object $a$ of $\cA$ one has that $a$ is not equal, but rather \emph{isomorphic} to $G(F(a))$ in the category $\cA$, and similarly for $\cB$. This is the defining property for an equivalence of categories, as opposed to that of an isomorphism of categories.

Here are some classical correspondences that can be expressed by means of equivalences of categories which are \emph{not} isomorphisms of categories.

\begin{enumerate}[leftmargin=*]
  \item \emph{Zariski duality} \cite[\textsection2]{GoertzWedhorn}:
  In algebraic geometry there is a contravariant equivalence between the category of affine schemes and the category of commutative rings.
      \item \emph{Categorified classification of covering spaces} \cite[\textsection3]{tomDieckAT}: 
      Given a nice 
      space $X$, there is an equivalence between the category of covering spaces over $X$ and the category of sets with an action of the fundamental groupoid of $X$.    
       \item \emph{Morita theory}:
       Given a commutative ring $R$,
       there is an equivalence between the category of modules over $R$ and the category of modules over $\mathrm{Mat}_{n\times n}R$. Also, given commutative rings $R$ and $S$, there is an equivalence between the product of the categories of modules over $R$ and $S$, and the category of modules over $R\times S$.
       %added
    \item \emph{Gelfand duality} \cite{Gelfand}: In functional analysis there is a contravariant equivalence between the category of commutative $C^*$-algebras and the category of compact Hausdorff topological spaces.
  
\item \emph{Dold--Kan correspondence} \cite[\textsection III]{GoerssJardine}:
There's an equivalence between the category of simplicial abelian groups and the category of non-negatively graded chain complexes.
     \item \emph{$1$-categorical straightening-unstraightening} \cite{LR}: Given a category $\cC$, there is an equivalence between the category of presheaves on $\cC$ and the category of discrete fibrations over $\cC$.
    \item \emph{Low dimensional cobordism hypothesis} \cite{Kock}: In mathematical physics, given a field $K$ there is an equivalence between the category of vector space valued $1$-dimensional topological quantum field theories over $K$ and the category of finite-dimensional $K$-vector spaces, and an equivalence between the category of $K$-vector space valued $2$-dimensional topological quantum field theories and the category of commutative Frobenius $K$-algebras.
\end{enumerate}

But, once again, too many phenomena of interest cannot be formalized via the notion of equivalence and even more fundamentally via the notion of category itself. 
Indeed, many mathematical objects of interest naturally assemble into something more complex than an ordinary category, such as an $n$-category, an $\infty$-category, or more generally an $(\infty,n)$-category.

The differences amongst these notions arise essentially from choosing different combinations of two parameters: whether there are non-invertible morphisms in dimension higher than $1$, and whether the axioms hold on the nose or up to a higher morphism. Roughly speaking, the differences amongst these types of higher categories can be summarized by the following table
\begin{center}
\begin{tabular}{|c|cc|}
\hline
     &strict axioms& weak axioms\\
     \hline
     invertibility of morphisms in dim $>0$& set & $\infty$-groupoid\\
 invertibility of morphisms in dim $>1$& category & $\infty$-category\\
   invertibility of morphisms in dim $>n$& $n$-category & $(\infty,n)$-category\\ 
    \hline
\end{tabular}
\end{center}
where the first and second row are special cases of the last one when taking $n=0,1$.

Notable examples of higher categories that cannot be modelled by ordinary categories include:
\begin{enumerate}[leftmargin=*]
   \item Versions of the $\infty$-category of spaces, of rational spaces, or of spectra;
    \item Versions of the $\infty$-category of chain complexes, and of DG-algebras;
 \item The $2$-category of categories and the $(\infty,2)$-category of $\infty$-categories;
    \item Versions of the $(\infty,n)$-category of cobordisms \cite{bd,luriecobordism,AyalaThesis,CalaqueScheimbauer, AFcobordism};
    \item Versions of the $(\infty,2)$-category of spans or correspondences of $\infty$-groupoids, or more generally inside a monoidal $(\infty,2)$-category \cite{luriecobordism,GaitsgoryRozenblyumVolI,HaugsengSpan,dk,HarpazSpan};
    \item Versions of the $(\infty,n+1)$-Morita category of $\mathbb E_n$-algebras \cite{JFSMorita,HaugsengMorita}.
\end{enumerate}

And here are several correspondences that need to be expressed as $(\infty,n)$-equivalences of $(\infty,n)$-categories which are \emph{not} equivalences of ordinary categories.

\begin{enumerate}[leftmargin=*]

 \item \emph{Algebraic models in rational and $p$-adic homotopy theory} \cite{QuillenRational, Sullivan, Mandell}: There's an $(\infty,1)$-equivalence between the $\infty$-category of rational spaces and the $\infty$-category of reduced
differential graded Lie algebras over $\mathbb Q$ and also to the homotopy theory of
$2$-reduced differential graded cocommutative coalgebras over $\mathbb Q$. Sullivan's algebraic model defines an $(\infty,1)$-equivalence between the $\infty$-category of rational spaces and the $\infty$-category of Sullivan algebras. In a similar vein, given any prime number $p$, there's an $(\infty,1)$-equivalence between the $\infty$-category of $p$-adic spaces and an appropriate $\infty$-category of $\mathbb E_{\infty}$-algebras.

\item \emph{$2$- and $(\infty,1)$-categorical straightening-unstraightening} \cite{Grothendieck,htt,NuitenStraightening}: Given a $2$-category $\cC$, there is a $2$-equivalence of $2$-categories between the $2$-category of categorical pseudo-presheaves over $\cC$ and the $2$-category of Grothendieck fibrations over $\cC$. Also, given a $\infty$- (or more generally $(\infty,n)$-)category $\cC$, there is an $(\infty,1)$-equivalence between the $\infty$-category of $\infty$- (or more generally $(\infty,n)$-)categorical presheaves and the $\infty$-category of cartesian fibrations over $\cC$.

\item \emph{Fully extended cobordism hypothesis} \cite{bd,luriecobordism,FreedCobSurvey,AFcobordism}: Given a monoidal $(\infty,n)$-category $\cC$, evaluation at a point defines an $(\infty,0)$-equivalence between the $\infty$-groupoid of fully extended $n$-dimensional topological quantum field theories valued in $\cC$ and the $\infty$-groupoid of fully dualizable objects in $\cC$.

\item \emph{Homological mirror symmetry} \cite{KontsevichMirror,KontsevichSoibelmanTorus,GanatraPardonShende} Given a symplectic manifold $X$, there is a conjectural equivalence of $A_\infty$-categories -- which are certain stable $\infty$-categories -- between an appropriate Fukaya category of $X$ and the 
derived category of coherent sheaves on the mirror variety of $X$.

\item \emph{Stable homotopy hypothesis} \cite{JO,GJO,GH,MOPSV}: There is an $(\infty,1)$-equivalence between the $(\infty,1)$-category of Picard $n$-categories and the $(\infty,1)$-category of stable $n$-types.

 \item \emph{$\mathbb E_n$-algebras and homology theories} \cite{AF1}:
Factorization homology defines an $(\infty,1)$-equivalence between an appropriate $\infty$-category of $\mathbb E_n$-algebras and an appropriate $\infty$-category of excisive homology theories.

\item \emph{Derived categories of quasi-coherent sheaves} \cite{MathewMeierAffineness}: Given a derived stack $X$ satisfying a suitable quasi-affineness condition, there is a monoidal $\infty$-equivalence (hence an $(\infty,2)$-equivalence) between the monoidal $\infty$-category of quasi-coherent sheaves and the monoidal $\infty$-category of modules over the global sections of $X$.

\item \emph{$\infty$-categorical Dold--Kan correspondence} \cite{DyckerhoffDK}: There's an $(\infty,1)$-equivalence between an appropriate $\infty$-category of simplicial stable $\infty$-categories and an appropriate $\infty$-category of connective chain complexes of stable $\infty$-categories.

\item \emph{Local geometric Langlands duality} \cite{ArinkinGaitsgory}: 
Given a connected reductive group $G$, there is conjecturally an $(\infty,1)$-equivalence between an appropriate $\infty$-category of $D$-modules on the moduli stack of $G$-bundles and an appropriate $\infty$-category of ind-coherent sheaves on a DG-stack of local system on the dual $\breve{G}$.
\end{enumerate}

So far we have only mentioned the \emph{external viewpoint} on equivalences, meaning the notion of $(\infty,n)$-equivalence \emph{between} $(\infty,n)$-categories. But there is also an \emph{internal viewpoint} on equivalences, which is about understanding the notion of an $(\infty,n)$-equivalence of objects \emph{inside} an $(\infty,n+1)$-category.
For instance, bijections of sets can be seen as isomorphisms in the category of sets and equivalences of categories can 
be understood as equivalences in the $2$-category of categories, and one can define more generally isomorphisms in any category and equivalences in any $2$-category. Even more generally, one can define $n$-equivalences inside an $(n+1)$-category and $(\infty,n)$-equivalences inside an $(\infty,n+1)$-category. This is the correct notion of sameness for objects inside a generic $(\infty,n+1)$-category.

The purpose of this paper is to survey the various notions of equivalence between and inside the various higher categorical structures: sets, categories, $n$-categories and $(\infty,n)$-categories, treating both the internal and external viewpoint each time. We also discuss some model categorical tools to help interpret an $(\infty,n+1)$ presented by a model at the model categorical level.

Note: This paper targets someone who is familiar with the ordinary category theory language. It starts by recalling elementary notions, such as the notion of a category, or isomorphism in a category, or even the notion of a bijection of sets. This is not done thinking that the reader is learning this for the first time, but instead aiming to stress one or more viewpoints that are suitable for generalizations for more complicated contexts.

\addtocontents{toc}{\protect\setcounter{tocdepth}{1}}
\subsection*{Acknowledgements}
We are thankful for insightful discussions with Dimitri Ara, Nick Gurski, Simon Henry, F\'elix Loubaton,  Lennart Meier,  Nima Rasekh, and Alex Rice. The second author is grateful for support from the National Science Foundation under Grant No. DMS-2203915.

\addtocontents{toc}{\protect\setcounter{tocdepth}{2}}

\section{Equivalences of and inside strict higher categories}
\label{EquiStrict}

%%% 0 %%%

\subsection{Isomorphism of sets and inside a category}

\subsubsection{Bijections of sets}

We recall the notion of bijection of sets, stressing a (perhaps odd) viewpoint that helps prepare the ground for the viewpoint of sets being $0$-categories. In virtue of this, we introduce the following auxiliary notation:

\begin{notn}
For a set $\cA$ and $a,a' \in \cA$, we denote
    \[\cA(a,a')=\left\{\begin{array}{lll}
         \{\term\} & \text{ if }a=a' \\
         \varnothing & \text{ if }a\neq a'.
    \end{array}\right.\]
    \end{notn}

One way to define bijections is in terms of injectivity and surjectivity:

\begin{defn}[Bijection of sets]
\label{defbijsets}
A function $F\colon \cA\to \cB$ between sets $\cA$ and $\cB$ is a \emph{bijection}, and we write $F\colon \cA\cong \cB$, if 
\begin{enumerate}[leftmargin=*]
    \item $F$ is \emph{surjective}, meaning that for every element $b\in\cB$ there exists an element $a\text{ in } \cA$ and equalities of elements
    \[b=Fa\text{ in } \cB;\]
    \item $F$ is \emph{injective}, meaning that for every elements  $a,a'\text{ in } \cA$ there are equalities
    \[\cA(a,a')=\cB(Fa,Fa')\in\left\{ \{\term\} ,\varnothing\right\}.\]
\end{enumerate}
Two sets $\cA$ and $\cB$ are \emph{in bijection}, and we write $\cA\cong \cB$, if there is a bijection between them.
\end{defn}

There is a characterization of bijections in terms the existence of an inverse function.

\begin{prop}[Isomorphism in the category $\set$]
A function $F\colon \cA\to \cB$ between sets is a bijection if and only if it is \emph{invertible}; i.e., if there exists a function $G\colon \cB\to \cA$ and equalities
    \[G\circ F=\id_\cA \text{ in } \set(\cA,\cA)\text{ and }F\circ G=\id_\cB\text{ in } \set(\cB,\cB).\]
\end{prop}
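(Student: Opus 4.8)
The plan is to prove both implications directly, translating the definition of bijection (\cref{defbijsets}) into the existence of a two-sided inverse.

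\emph{From bijection to invertibility.} Suppose $F\colon\cA\to\cB$ is a bijection. I would construct the candidate inverse $G\colon\cB\to\cA$ pointwise: given $b\in\cB$, surjectivity provides some $a\in\cA$ with $b=Fa$, and I set $Gb:=a$. The first thing to check is that this is well-defined, i.e.\ independent of the chosen preimage; this is exactly where injectivity enters. If $Fa=Fa'=b$, then $\cB(Fa,Fa')=\{\term\}$, so by the injectivity condition $\cA(a,a')=\{\term\}$, which by the definition of the notation $\cA(-,-)$ forces $a=a'$. Hence $G$ is a genuine function. It then remains to verify the two equalities in $\set(\cA,\cA)$ and $\set(\cB,\cB)$: for $a\in\cA$, we have $FGF(a)=F(a)$, and since $F$ is injective (applied to $GFa$ and $a$, whose images agree) we get $GFa=a$, so $G\circ F=\id_\cA$; and for $b\in\cB$, $FGb=b$ holds by the very definition of $Gb$, so $F\circ G=\id_\cB$.

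\emph{From invertibility to bijection.} Conversely, suppose $G\colon\cB\to\cA$ satisfies $G\circ F=\id_\cA$ and $F\circ G=\id_\cB$. For surjectivity: given $b\in\cB$, the element $a:=Gb$ satisfies $Fa=FGb=b$. For injectivity: given $a,a'\in\cA$, I must show $\cA(a,a')=\cB(Fa,Fa')$, i.e.\ that both sides are $\{\term\}$ simultaneously or both $\varnothing$. If $a=a'$ then trivially $Fa=Fa'$ and both sides are $\{\term\}$; conversely if $Fa=Fa'$ then applying $G$ gives $a=GFa=GFa'=a'$, so again both sides are $\{\term\}$. The remaining case ($a\neq a'$, forcing $Fa\neq Fa'$ by the contrapositive just shown) gives $\varnothing=\varnothing$. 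In all cases the equality of sets holds.

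I do not anticipate a genuine obstacle here: the statement is elementary and the only subtlety is the well-definedness of $G$ in the forward direction, which is precisely why the injectivity hypothesis is phrased the way it is. The point of writing it out carefully is stylistic — to keep everything phrased in the ``$0$-categorical'' language of hom-sets $\cA(a,a')\in\{\{\term\},\varnothing\}$ and equalities therein, so that the argument visibly prefigures the analogous characterization of equivalences in a category, a $2$-category, and eventually an $(\infty,n+1)$-category. In particular I would deliberately phrase injectivity checks as equalities $\cA(a,a')=\cB(Fa,Fa')$ rather than as implications about elements, mirroring the later fully faithfulness conditions.
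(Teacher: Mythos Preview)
Your proof is correct. The paper does not actually supply a proof of this proposition; it is stated as an elementary fact and immediately used to motivate the general notion of isomorphism in a category. Your write-up is entirely in keeping with the paper's intent: you phrase injectivity via the equalities $\cA(a,a')=\cB(Fa,Fa')$ in $\{\{\term\},\varnothing\}$, which is exactly the ``$0$-categorical'' framing the paper sets up in \cref{defbijsets} to foreshadow the fully-faithfulness conditions appearing later for equivalences of categories, $2$-categories, and $(\infty,n)$-categories.
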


This viewpoint will be taken to be the notion of isomorphism in the category of sets, and is generalizable to define isomorphisms in any category $\cC$.

\subsubsection{Categories}

Recall from e.g.~\cite[\textsection I.2]{MacLane}
that a (\emph{small}) \emph{category}
$\cC$ consists of
\begin{itemize}[leftmargin=*]
    \item a set of \emph{objects} $\Ob\cC$,
    \item for every $c,c'\in\Ob\cC$ a \emph{hom-set} $\cC(c,c')$,
    \item an identity operator function for every
$c\in\Ob\cC$ 
\[\id_c\colon\{\term\}\to\cC(c,c)\quad\quad\term \mapsto \id_c\]
\item a \emph{composition operator} function for every $c,c',c''\in\Ob\cC$
\[\circ\colon\cC(c,c')\times\cC(c',c'')\to\cC(c,c'')\quad\quad(f,g) \mapsto g\circ f\]
\end{itemize}
satisfying for every $c,c',c'',c'''\in\Ob\cC$ the \emph{associativity axiom}, given by the commutativity of the diagram of sets and functions 
\begin{equation}
\label{associativity}
\begin{tikzcd}[column sep=3cm]
\cC(c,c')\times\cC(c',c'')\times\cC(c'',c''')\arrow[r,"{\id_{\cC(c,c')}\times\circ}"]\arrow[d, "{\circ\times\id_{\cC(c'',c''')}}" swap]&\cC(c,c')\times\cC(c',c''')\arrow[d,"\circ"]\\
\cC(c,c'')\times\cC(c'',c''')\arrow[r,"\circ" swap]&\cC(c,c''')
\end{tikzcd}
\end{equation}
for every $c,c'\in\Ob\cC$ the \emph{unitality axiom}, given by the commutativity of the diagram of sets and functions
\begin{equation}
\label{unitality}
\begin{tikzcd}
\{*\}\times\cC(c,c')\arrow[dr, "\pr_2"] \arrow[d, "{\id_c \times \id_{\cC(c,c')}}"swap]&\\
\cC(c,c)\times\cC(c,c')\arrow[r,"\circ" swap]&\cC(c,c')
\end{tikzcd}
\mbox{ and }\quad
\begin{tikzcd}
\cC(c,c')\times \{*\}\arrow[dr, "\pr_1"]\arrow[d, "{\id_{\cC(c,c')}\times \id_{c'}}" swap]&\\
\cC(c,c')\times\cC(c',c')\arrow[r,"\circ" swap]&\cC(c,c')
\end{tikzcd}
\end{equation}

In particular, the data of a category
$\cC$ also determines a set $\Mor\cC$ of \emph{morphisms}, given by the union of all hom-sets
\[\Mor\cC\coloneqq\coprod_{c,c'\in\Ob\cC}\cC(c,c'),\]
as well as several operators:
\begin{itemize}[leftmargin=*]
    \item \emph{source} and \emph{target} operators $s,t\colon \Mor\cC \to \Ob\cC$,
    \item \emph{identity} operator
    $\id \colon \Ob\cC \to \Mor\cC$
    \item \emph{composition} operators
    $\comp \colon \Mor\cC \times_{\Ob\cC} \Mor\cC \to \Mor\cC$, defined all pairs of morphisms $(f,g)$ such that $s(g) = t(f)$.
\end{itemize}

Given categories $\cC$ and $\cD$, a \emph{functor} $F\colon\cC\to\cD$ consists of
\begin{itemize}[leftmargin=*]
    \item a function on objects
\[F\colon\Ob\cC\to\Ob\cD\]
\item and a function on hom-sets for every $c,c'\in\Ob\cC$
\[F_{c,c'}\colon\cC(c,c')\to\cD(Fc,Fc'),\]
\end{itemize}
satisfying the \emph{functorial properties}; namely, the compatibility with composition, given by the commutativity for every $c,c',c''\in\Ob\cC$ of the diagram
\begin{equation}
    \label{functorial1}
\begin{tikzcd}
\cC(c,c')\times \cC(c',c'')\arrow[r]\arrow[d, "\circ"']&\cD(Fc,Fc')\times \cD(Fc',Fc'')\arrow[d, "\circ"]\\
\cC(c,c'')\arrow[r]&\cD(Fc,Fc'')
\end{tikzcd}
\end{equation}
and the compatibility with identity, given by the commutativity for every $c\in\Ob\cC$ of the diagram
\begin{equation}
    \label{functorial2}
\begin{tikzcd}
\{\term\}\arrow[r]\arrow[d]&\{\term\}\arrow[d]\\
\cC(c,c)\arrow[r]&\cD(Fc,Fc)
\end{tikzcd}
\end{equation}

For instance, $\set$ is the large\footnote{A \emph{large category} $\cC$ is defined analogously to a small category, with the difference that $\Ob\cC$ is allowed to be a class, as opposed to a set. The formalization of categories of different sizes can be made precise by making use of \emph{Grothendieck universes} (cf.~\cite[Expos\'e I, Appendice II]{SGA4}).} category of sets and functions, with usual composition of functions and identity functions of sets.

\subsubsection{Isomorphisms in a category}
The following definition recovers that of bijection of sets when read in the case of the category $\cC=\set$.

\begin{defn}[Isomorphism in a category $\cC$]
\label{defisocat}
A morphism $F\colon A\to B$ between objects $A$ and $B$ in a category $\cC$ is an \emph{isomorphism}, and we write $F\colon A\cong B$, if there exists a morphism $G\colon B\to A$ in $\cC$ and equalities
    \[G\circ F=\id_A \text{ in } \cC(A,A)\text{ and }F\circ G=\id_{B}\text{ in } \cC(B,B)\]
Two objects $A$ and $B$ of a $1$-category $\cC$ are \emph{isomorphic}, and we write $A\cong B$, if there is an isomorphism between them in $\cC$.
\end{defn}

Isomorphisms can also be described in terms of the existence of inverses on each side (a viewpoint that will inspire \cref{constructionJ}):

\begin{rmk}
\label{IsoTwoInverses}
A morphism $F\colon A\to B$ in a category $\cC$ is an isomorphism if and only if there exist morphisms $G,G'\colon B\to A$ such that
\[F\circ G=\id_B\quad\text{ and }\quad G'\circ F=\id_A,\]
and this formulation was taken as one the first definitions of isomorphism in a category in \cite[\textsection I.1]{EilenbergMacLaneNatEqui}.
\end{rmk}

It is easily verified that the relation of being isomorphic for sets or more generally objects in a category $\cC$ is an equivalence relation.

\subsubsection{The walking isomorphism}

One can identify the universal shape $\cI$ that encodes the notion of an isomorphism, as we briefly recall after introduce the auxiliary category $\cP$.

\begin{const}
\label{constP}
We denote by $\cP$ the \emph{free category on two morphisms with opposite directions} $f\colon a\to b$ and $g\colon  b\to a$. This is obtained by gluing $f$ and $g$ ``head-to-tail'', and generating all possible compositions.
Explicitly, the set of objects is $\Ob\cP=\{a, b\}$ and the hom-sets are
\[\cP(a,a)=\{(g\circ f)^n\ |\ n\ge0\}\quad\cP(a, b)=\{f\circ(g\circ f)^n\ |\ n\ge0\}\]
\[\cP( b, a)=\{g\circ(f\circ g)^n\ |\ n\ge0\}\quad\cP( b, b)=\{(f\circ g)^n\ |\ n\ge0\}.\]
The category $\cP$ can be understood as the pushout of categories 
\[
\begin{tikzcd}[column sep=3.15cm]
\partial\cC_1\amalg\partial\cC_1^{\op}\arrow[r,""]\arrow[d,hook] \arrow[dr, phantom, "\pushout", very near end, yshift=0.2cm, xshift=0.1cm]&\cC_0\amalg\cC_0\arrow[d]\\
\cC_1\amalg\cC_1^{\op}\arrow[r]&\cP
\end{tikzcd}
\]
Here,
\begin{itemize}[leftmargin=*]
    \item $\cC_0$ denotes the walking object category, which consists of one object and no non-identity morphisms;
    \item $\cC_1$ and $\cC_1^{\op}$ denote the walking morphism category and its opposite, which consist of one morphism -- $f\colon a\to b$ and $g\colon b\to a$ respectively -- between different objects and no other non-identity morphisms;
    \item $\partial\cC_1=\partial\cC_1^{\op}$ denotes the boundary of $\cC_1$, consisting of two objects $a$ and $b$ and no non-identity morphisms.
\end{itemize} 
The vertical map is the canonical inclusion and the two cocomponents of the top horizontal map are the two distinct isomorphisms $\partial\cC_1\cong\cC_0\amalg\cC_0$.
\end{const}

\begin{const}[The walking isomorphism]
\label{constI}
We denote by $\cI$ the \emph{walking isomorphism} category, which is obtained from the category $\cP$ by imposing the relations
\[f\circ g=\id_{ b}\quad\text{ and }\quad g\circ f=\id_{a}.\]
The set of objects is $\Ob\cI=\{a, b\}$ and the hom-sets are
\[\cI(a,a)=\{\id_a\}\quad\cI( a, b)=\{f\}\quad\cI( b, b)=\{\id_b\}\quad\cI( b, a)=\{g\}.\]
The category $\cI$ can be understood as the pushout of categories
\[
\begin{tikzcd}[column sep=3.15cm]
\partial\cC_2\amalg\partial\cC_2\arrow[r,"{[(f\circ g, \id_ b), (g\circ f, \id_{a})]}"]\arrow[d,"" swap]
\arrow[dr, phantom, "\pushout", very near end, yshift=0.2cm, xshift=0.1cm]&\cP\arrow[d]\\
\cC_1\amalg\cC_1\arrow[r]&\cI
\end{tikzcd}
\]
where $\partial\cC_2$ denotes the category consisting of two parallel morphisms and both the cocomponents of the vertical map are the canonical \emph{folding} map $\partial\cC_2\to\cC_1$ that identifies the two morphisms.
\end{const}

By design, the walking isomorphism $\cI$ detects isomorphisms in the following sense:

\begin{rmk}
\label{Idetects}
A morphism $F\colon A\to B$ in a category $\cC$ is an \emph{isomorphism} if and only if the functor $ F\colon\cC_1\to\cC$ determined by $F$ extends to a functor $\cI\to\cC$ along the inclusion $\cC_1\hookrightarrow \cI$, i.e., if
there is a solution to the lifting problem of categories
\[\begin{tikzcd}
{\cC_1}\arrow[r," F"]\arrow[d,hook, "f" swap]&\cC\\
\cI\arrow[ru,dashed]
\end{tikzcd}\]
\end{rmk}

\subsubsection{The fundamental set of a category}

The isomorphism relation for objects in a category can also be tested as equality relation in a suitable quotient set.

\begin{defn}
Given a category $\cC$, the \emph{fundamental set} of $\cC$ is the set
\[\Pi_0\cC\coloneqq\Ob\cC/\cong\]
of the isomorphism classes of objects in $\cC$.
This defines a product-preserving functor $\Pi_0\colon\cat\to\set$.
\end{defn}

Essentially by definition, we get:

\begin{rmk}
\label{FundCatEqui1cat}
Given a category $\cC$ and $A,B\in\Ob\cC$ the following are equivalent.
\begin{enumerate}[leftmargin=*]
    \item There is an isomorphism
    \[A\cong B\text{ in the category }\cC.\]
    \item There is an equality
    \[A= B\text{ in the fundamental set }\Pi_0\cC.\]
    \end{enumerate}
\end{rmk}

\subsection{Equivalence of categories and inside a $2$-category}

\subsubsection{Equivalences of categories}

The notion of equivalence of categories is a classical one, occurring at least as early as \cite[\textsection I.8]{GabrielAbeliennes} and discussed e.g.\ in \cite[\textsection IV.4]{MacLane}.

\begin{defn}[Equivalence of categories]
\label{defequiofcat}
A functor $F\colon \cA\to \cB$ between categories is an \emph{equivalence}, and we write $F\colon \cA\simeq \cB$, if
\begin{enumerate}[leftmargin=*] 
    \item the functor $F$ is \emph{surjective on objects up to isomorphism}, meaning that for every object $b\in\Ob\cB$ there exists an object $a\in\Ob\cA$ and an isomorphism
    \[b\cong Fa\text{ in the category } \cB.\] 
    \item the functor $F$ is \emph{fully faithful} or a \emph{hom-wise isomorphism}, meaning that for all objects $a,a'\text{ in } \cA$, the functor $F$ induces an isomorphism
    \[F_{a,a'}\colon \cA(a,a')\cong \cB(Fa,Fa')\text{ of sets}.\]
\end{enumerate}
Two categories $\cA$ and $\cB$ are \emph{equivalent}, and we write $\cA\simeq \cB$, if there exists an equivalence of categories between them.
\end{defn}

\begin{prop}
\label{Icontractible}
The walking isomorphism category $\cI$ is \emph{contractible}, meaning that there is an equivalence of categories
\[\cI\simeq\cC_0.\]
\end{prop}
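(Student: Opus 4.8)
The plan is to exhibit an explicit functor and check directly that it satisfies the two conditions in \cref{defequiofcat}. The most economical choice would be the unique functor $P\colon\cI\to\cC_0$ to the terminal category, but it is more instructive to go the other way: I would consider the functor $\iota\colon\cC_0\to\cI$ sending the unique object of $\cC_0$ to the object $a$ of $\cI$ and the unique (identity) morphism to $\id_a$, which is manifestly functorial, and then show that $\iota$ is an equivalence of categories.

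To verify condition (1) of \cref{defequiofcat}, I would check that every object of $\cI$ is isomorphic to $\iota(\term)=a$. This is immediate for $a$ itself, and for $b$ it is precisely the content of the defining relations $f\circ g=\id_b$ and $g\circ f=\id_a$ imposed in \cref{constI}, which exhibit $g\colon b\to a$ as an isomorphism with inverse $f$; hence $b\cong a$ in $\cI$ and $\iota$ is surjective on objects up to isomorphism.

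To verify condition (2), I would check that $\iota$ is fully faithful, i.e.\ that the induced map $\iota_{\term,\term}\colon\cC_0(\term,\term)\to\cI(a,a)$ is a bijection of sets. Using the explicit description of the hom-sets of $\cI$ recorded in \cref{constI}, both $\cC_0(\term,\term)=\{\id_\term\}$ and $\cI(a,a)=\{\id_a\}$ are singletons, and $\iota_{\term,\term}$ sends $\id_\term$ to $\id_a$, so it is a bijection. Combining the two conditions yields $\iota\colon\cC_0\simeq\cI$, and hence $\cI\simeq\cC_0$.

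I do not expect a genuine obstacle here: once the hom-sets of $\cI$ are in hand, both conditions are immediate, and the only point worth isolating is that essential surjectivity is nothing but a restatement of the defining relations of $\cI$. As a sanity check I would also note that $P\circ\iota=\id_{\cC_0}$ and that $\iota\circ P$ is naturally isomorphic to $\id_{\cI}$ via the components $\id_a$ at $a$ and $g$ at $b$ (naturality against $f$ and $g$ being exactly the relations of \cref{constI}), so that $\iota$ and $P$ form an adjoint equivalence; this recovers the same conclusion from either direction.
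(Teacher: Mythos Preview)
Your argument is correct. The paper proceeds in the opposite direction: it takes the unique functor $\varphi\colon\cI\to\cC_0$ (your $P$) and verifies \cref{defequiofcat} for that functor. In that direction essential surjectivity is automatic (there is only one object in $\cC_0$, and it equals $\varphi(a)$), whereas full faithfulness requires checking all four hom-sets $\cI(a,a)$, $\cI(a,b)$, $\cI(b,a)$, $\cI(b,b)$ against the singleton $\cC_0(\term,\term)$. Your choice of $\iota\colon\cC_0\to\cI$ trades these: full faithfulness collapses to a single hom-set, while essential surjectivity now carries the content (namely $b\cong a$, which is exactly the defining relation of $\cI$). Both routes are equally elementary; yours has the advantage of isolating the defining relations as the substantive step, and your closing remark that $\iota$ and $P$ assemble into an explicit (adjoint) equivalence is a nice complement. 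One cosmetic point in that sanity check: with components $\id_a$ and $g$, the natural isomorphism runs $\id_{\cI}\Rightarrow\iota\circ P$ rather than the other way; if you want $\iota\circ P\Rightarrow\id_{\cI}$, the component at $b$ should be $f$.
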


\begin{proof}
Consider the unique functor $\varphi\colon\cI\to\cC_0$. There is an isomorphism
(in fact an equality)
\[\term\cong \varphi(a)\text{ in }\cC_0,\]
and furthermore $\varphi$ induces an isomorphism
\[\varphi_{a,a}\colon\cI(a,a)=\{\id_a\}\cong\{\id_*\}
=\cC_0(*,*)=\cC_0(\varphi(a),\varphi(a))\text{ in }\set.\]
Similarly, the functions $\varphi_{b,b}$, $\varphi_{a,b}$ and $\varphi_{b,a}$ are isomorphisms. So there's an equivalence $\varphi\colon\cI\simeq\cC_0$, as desired.
\end{proof}

\begin{rmk}
In the context of the canonical model structure on the category $\cat$ (see e.g.~\cite{RezkCat} or \cite{JoyalVolumeII} for a description), the object $\cI$ is a cylinder for the terminal object in the sense of \cite[Definition 1.2.4]{hovey}. Meaning, it comes with a factorization
\[\cC_0\amalg\cC_0\hookrightarrow\cI\stackrel\simeq\longrightarrow\cC_0\]
of the folding map of $\cC_0$ into a cofibration followed by a weak equivalence (in fact acyclic fibration). Note that $\cI$ is not the unique cylinder, and shouldn't be expected to be. In fact, for every $n\ge0$ the connected groupoid on $n$ elements, obtained as the iterated pushout $\cI\amalg_{\cC_0}\dots\amalg_{\cC_0}\cI$ of $n$-copies of $\cI$ along objects,
is also a  cylinder on the terminal object in the canonical model structure on $\cat$.
Every other cylinder object would be qualified to detect isomorphisms in a category as in \cref{Idetects}.
\end{rmk}

There is a characterization of equivalences between categories
in terms the existence of a ``weak'' inverse. See e.g.~\cite[Proposition 12]{GabrielAbeliennes} or \cite[\textsection IV.4]{MacLane} for a proof.

\begin{prop}
A functor $F\colon \cA\to \cB$ between categories is an \emph{equivalence} if
and only if there exists a functor $G\colon \cB\to \cA$ and isomorphisms
    \[G\circ F\cong \id_\cA\text{ in } \cat(\cA,\cA)\text{ and }  F\circ G\cong\id_\cB\text{ in } \cat(\cB,\cB).\]
\end{prop}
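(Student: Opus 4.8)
The plan is to prove both implications of the biconditional, treating the ``only if'' direction as the substantive one. Throughout I will use \cref{defequiofcat} for the characterization of equivalences via surjectivity up to isomorphism and full faithfulness, and the standard fact (recalled just above the statement, e.g.\ \cref{IsoTwoInverses}) that an isomorphism in a hom-category $\cat(\cA,\cA)$ is a natural isomorphism.

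For the ``if'' direction, suppose $G\colon\cB\to\cA$ and natural isomorphisms $\eta\colon G\circ F\cong\id_\cA$ and $\varepsilon\colon F\circ G\cong\id_\cB$ are given. I would check the two conditions of \cref{defequiofcat} directly. For essential surjectivity: given $b\in\Ob\cB$, the component $\varepsilon_b\colon FG(b)\cong b$ exhibits $b$ as isomorphic to $F(Gb)$, so $F$ is surjective on objects up to isomorphism. For full faithfulness: fix $a,a'\in\Ob\cA$; I must show $F_{a,a'}\colon\cA(a,a')\to\cB(Fa,Fa')$ is a bijection. Injectivity follows because if $F(\alpha)=F(\beta)$ for $\alpha,\beta\colon a\to a'$, then $GF(\alpha)=GF(\beta)$, and naturality of $\eta$ gives $\alpha = \eta_{a'}\circ GF(\alpha)\circ\eta_a^{-1} = \eta_{a'}\circ GF(\beta)\circ\eta_a^{-1} = \beta$. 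Surjectivity: given $\gamma\colon Fa\to Fa'$ in $\cB$, set $\alpha \coloneqq \eta_{a'}\circ G(\gamma)\circ\eta_a^{-1}\colon a\to a'$; using naturality of $\eta$ and of $\varepsilon$ one computes that $F(\alpha)=\gamma$ (this is a short diagram chase, relying on the fact that $F$ is full with respect to the image of $G$, which in turn follows from $\varepsilon$ being a natural iso — concretely $F(\alpha) = \varepsilon_{Fa'}\circ FGF(\alpha)\circ\varepsilon_{Fa}^{-1} = \varepsilon_{Fa'}\circ FG(\gamma)\circ\varepsilon_{Fa}^{-1} = \gamma$). So $F$ is an equivalence.

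For the ``only if'' direction, assume $F\colon\cA\to\cB$ is an equivalence in the sense of \cref{defequiofcat}. The construction of the weak inverse uses the axiom of choice. For each $b\in\Ob\cB$, by essential surjectivity choose an object $Gb\in\Ob\cA$ together with an isomorphism $\varepsilon_b\colon F(Gb)\cong b$ in $\cB$. To define $G$ on morphisms: given $\beta\colon b\to b'$ in $\cB$, the composite $\varepsilon_{b'}^{-1}\circ\beta\circ\varepsilon_b\colon F(Gb)\to F(Gb')$ lies in $\cB(F(Gb),F(Gb'))$, and since $F_{Gb,Gb'}$ is a bijection by full faithfulness, there is a unique morphism $G(\beta)\colon Gb\to Gb'$ in $\cA$ with $F(G\beta)=\varepsilon_{b'}^{-1}\circ\beta\circ\varepsilon_b$. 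Functoriality of $G$ (preservation of identities and composition) then follows from the corresponding properties of $F$ together with the uniqueness clause — this is where most of the routine verification lives, and it is straightforward but should be stated. By construction the $\varepsilon_b$ assemble into a natural isomorphism $\varepsilon\colon F\circ G\cong\id_\cB$: naturality $\varepsilon_{b'}\circ FG(\beta) = \beta\circ\varepsilon_b$ is exactly the defining equation for $G(\beta)$.

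It remains to produce a natural isomorphism $\eta\colon G\circ F\cong\id_\cA$. For each $a\in\Ob\cA$, the morphism $\varepsilon_{Fa}\colon FG(Fa)\cong Fa$ is an isomorphism in $\cB$ between objects in the image of $F$; since $F_{GFa,a}$ is a bijection, there is a unique $\eta_a\colon GF(a)\to a$ in $\cA$ with $F(\eta_a)=\varepsilon_{Fa}$, and $\eta_a$ is an isomorphism because $F$ reflects isomorphisms (being fully faithful). Naturality of $\eta$ — i.e.\ $\eta_{a'}\circ GF(\alpha) = \alpha\circ\eta_a$ for $\alpha\colon a\to a'$ — is checked by applying the faithful functor $F$ to both sides and using the naturality of $\varepsilon$ at $F(\alpha)$. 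This completes the construction. The main obstacle, such as it is, is organizational rather than conceptual: one must set up $G$ carefully via the bijections $F_{-,-}$ so that functoriality and both naturality squares fall out of uniqueness, and one must invoke the axiom of choice transparently for the object-level assignment $b\mapsto(Gb,\varepsilon_b)$. Since the excerpt cites \cite[Proposition 12]{GabrielAbeliennes} and \cite[\textsection IV.4]{MacLane}, I would keep the write-up brief and refer there for the full bookkeeping.
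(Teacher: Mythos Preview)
Your proof is correct and is precisely the standard argument; the paper itself does not supply a proof for this proposition but simply refers the reader to \cite[Proposition 12]{GabrielAbeliennes} and \cite[\textsection IV.4]{MacLane}, whose arguments are the ones you have reproduced. One small remark: in your surjectivity step for the ``if'' direction, the equality $FGF(\alpha)=FG(\gamma)$ is the place where naturality of $\eta$ is actually used (it gives $GF(\alpha)=G(\gamma)$), and the aside about ``$F$ being full with respect to the image of $G$'' is unnecessary and slightly distracting---you can drop it.
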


This viewpoint can be taken to be the notion of equivalences in the $2$-category of categories, and is generalizable to define equivalences in any $2$-category $\cC$.

\subsubsection{$2$-categories}

Recall from e.g.~\cite[\textsection XII.3]{MacLane}
that a \emph{$2$-category}\footnote{The notion of $2$-category also occurs in \cite[\textsection I.5]{EilenbergKelly} (under the name of \emph{hypercategory}), based on ideas from \cite{EhresmannDouble,BenabouRelative}.} $\cC$ consists of \begin{itemize}[leftmargin=*]
    \item a \emph{set of objects} $\Ob\cC$;
    \item for every $c,c'\in\Ob\cC$, a \emph{hom-category} $\cC(c,c')$;
    \item for every $c\in\Ob\cC$, an \emph{identity} functor
\[\id_c\colon\{*\}\to\cC(c,c)\quad\quad* \mapsto \id_c\]
    \item for every $c,c',c''\in\Ob\cC$ a \emph{composition} functor 
\[\circ\colon\cC(c,c')\times\cC(c',c'')\to\cC(c,c'')\quad\quad(f,g) \mapsto g\circ f;\]
\end{itemize}
satisfying the associativity axiom \eqref{associativity} and the unitality axiom \eqref{unitality} read as diagrams of categories and functors, as opposed to sets and functions.

Given categories $\cC$ and $\cD$, a \emph{$2$-functor} $F\colon\cC\to\cD$ consists of \begin{itemize}[leftmargin=*]
    \item a function on objects
\[F\colon\Ob\cC\to\Ob\cD\]
\item for every $c,c'\in\Ob\cC$, a functor on hom-categories
\[F_{c,c'}\colon\cC(c,c')\to\cD(Fc,Fc');\]
\end{itemize}
satisfying the \emph{functorial properties} as \eqref{functorial1} and \eqref{functorial2} read as diagrams of categories and functors, as opposed to sets and functions.

In particular, the data of a $2$-category
$\cC$ determines a set $1\Mor\cC$ of \emph{$1$-morphisms} and a set $2\Mor\cC$ of \emph{$2$-morphisms} given by
\[1\Mor\cC\coloneqq\coprod_{c,c'\in\Ob\cC}\Ob(\cC(c,c'))\quad\text{ and }\quad2\Mor\cC\coloneqq\coprod_{c,c'\in\Ob\cC}\Mor(\cC(c,c')),\]
as well as several operators:
\begin{itemize}[leftmargin=*]
    \item \emph{source} and \emph{target} operators \[s_0, t_0 \colon \Mor\cC \to \Ob\cC,\quad s_1, t_1 \colon 2\Mor\cC \to \Mor\cC,\quad s_0, t_0 \colon 2\Mor\cC \to \Ob\cC\]
    \item \emph{identity} operators
    \[\id_1 \colon \Ob\cC \to \Mor\cC,\quad\id_2 \colon \Ob\cC \to 2\Mor\cC,\quad\id_2 \colon \Ob\cC \to 1\Mor\cC\]
    \item \emph{composition} operators\footnote{For simplicity of exposition, we'll allow abuses of notation when the context is clear. For instance, we may omit the composition symbol, or omit the subscript of composition operators, or shorten expressions such as $\alpha'\comp_0 \id_f$ to $\alpha'\comp_0f$ or $\alpha'f$.}
    \[\comp_0 \colon \Mor\cC \tttimes{\Ob\cC} \Mor\cC \to \Mor\cC,\quad\comp_0 \colon 2\Mor\cC \tttimes{\Ob\cC} \Mor\cC \to \Mor\cC,\]
    \[\text{and }\comp_1 \colon 2\Mor\cC \tttimes{\Mor\cC} 2\Mor\cC \to 2\Mor\cC\]
    defined for all pairs of ($2$-)morphisms $(f, g)$ such that the (dimension appropriate) source of the $g$ equals the target of $f$.
\end{itemize}

For instance, the large category $\cat$ of categories and functors can also be regarded as a $2$-category: the large $2$-category of categories, functors and natural transformations, with usual composition of functors and natural transformations.

%%%

\subsubsection{Equivalences in a $2$-category}

We can now define the notion of an equivalence in an arbitrary $2$-category. It is used e.g.~in \cite[\textsection 1.5]{StreetFibBicat}, \cite[\textsection 2]{lack1}, \cite[Definition~1.6]{GurskiBieq}, \cite[Definition~1.4.6]{RiehlVerityBook}, and it also recovers the notion of equivalence between categories when read in the case of the large $2$-category $\cC=\cat$.

\begin{defn}[Equivalence and adjoint equivalences in a $2$-category $\cC$]
\label{defequiin2cat}
A $1$-morphism $F\colon A\to B$ in a $2$-category $\cC$ is an \emph{equivalence}, and we write $F\colon A\simeq B$, if
there exists a $1$-morphism $G\colon B\to A$ in $\cC$ and isomorphisms
    \[\id_A\cong G\circ F\text{ in }\cC(A,A)\quad\text{ and }\quad F\circ G\cong \id_B\text{ in } \cC(B,B).\]
Two objects $A$ and $B$ in a $2$-category $\cC$ are \emph{equivalent}, and we write $A\simeq B$, if there exists an equivalence between them in $\cC$.
\end{defn}

It is easily verified that the relation $\simeq$ of being equivalent for categories or more generally for objects in a $2$-category $\cC$ is an equivalence relation.

There is a characterization of equivalences in a $2$-category in terms of \emph{adjoint equivalences}, a notion that we briefly recall. See e.g.~\cite[\textsection 1.5]{StreetFibBicat}, \cite[\textsection IV.4]{MacLane}, \cite[\textsection 3]{lack2}, and \cite[Theorem~1.9]{GurskiBieq} for more details.

\begin{defn}[Adjoint equivalence in a $2$-category $\cC$]
A $1$-morphism $F\colon A\to B$ in a $2$-category $\cC$ is an \emph{adjoint equivalence} in $\cC$ if
there exist a $1$-morphism $G\colon B\to A$ in $\cC$ and isomorphisms
    \[\eta\colon\id_A\cong G\circ F\text{ in }\cC(A,A)\quad\text{ and }\quad\varepsilon\colon F\circ G\cong \id_B\text{ in }\cC(B,B)\]
    and equalities
    \[\varepsilon F\circ F\eta=\id_F\text{ in }\cC(A,B)(F,F)\quad\text{ and }\quad G\varepsilon\circ\eta G=\id_G\text{ in }\cC(B,A)(G,G).\]
\end{defn}

The following result is classical. See
e.g.~\cite[Proposition I.12]{GabrielAbeliennes}, \cite[\textsection 1.5]{StreetFibBicat}, \cite[\textsection IV.4]{MacLane}, \cite[Lemma 5]{lack2}, and \cite[Theorem~1.9]{GurskiBieq} for a proof. Standard proofs of this fact use string calculus for $2$-morphisms in a $2$-category, clarifying the algebraic formulas which can be used instead.

\begin{thm}
\label{thmadjointeq}
A $1$-morphism $F\colon A\to B$ in a $2$-category $\cC$ is an equivalence if and only if it is an adjoint equivalence in $\cC$.
\end{thm}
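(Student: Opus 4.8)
The plan is to prove the two directions separately. The ``only if'' direction is the substantive one: starting from a plain equivalence $F\colon A\to B$ with chosen $G$, $\eta\colon\id_A\cong G\circ F$ and $\varepsilon\colon F\circ G\cong\id_B$ (all invertible $2$-morphisms), we must produce new $2$-isomorphisms satisfying the two triangle identities. This is the $2$-categorical analogue of upgrading an adjunction-like datum to an honest adjunction, and the classical trick is to keep $G$ and $\eta$ and replace $\varepsilon$ by a corrected $2$-isomorphism $\varepsilon'$. The ``if'' direction is essentially immediate: an adjoint equivalence is by definition in particular an equivalence, since one forgets the triangle identities. So almost all the work is in the forward direction.

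For the forward direction I would first recall that, since $\eta$ and $\varepsilon$ are invertible, one has four invertible $2$-morphisms available ($\eta$, $\eta^{-1}$, $\varepsilon$, $\varepsilon^{-1}$), and I would write down the ``swallowtail'' correction: set
\[
\varepsilon'\coloneqq \varepsilon\circ\bigl(F\eta^{-1} G\bigr)\circ\bigl(\varepsilon^{-1} F G\bigr)\circ\bigl(F G\varepsilon\bigr)
\]
(up to the precise bracketing and whiskering conventions of the paper's composition operators $\comp_0,\comp_1$), or equivalently the standard formula $\varepsilon' = \varepsilon\circ (\varepsilon F G)^{-1}\circ (F\eta^{-1}G)\circ\dots$; the point is that $\varepsilon'$ is a composite of invertible $2$-morphisms, hence invertible, so $(F,G,\eta,\varepsilon')$ is still an equivalence datum. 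Then I would verify the first triangle identity $\varepsilon' F\circ F\eta=\id_F$ by a direct string-diagram / interchange-law computation, exactly the kind of ``zig-zag'' cancellation that the paper alludes to when it says standard proofs use string calculus. The key ingredients are the interchange law (middle-four exchange) for $\comp_0$ and $\comp_1$ in a $2$-category, functoriality of whiskering, and the fact that $\eta,\varepsilon$ and their inverses cancel.

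The nontrivial classical input — and the step I expect to be the main obstacle to write out cleanly — is that once the \emph{first} triangle identity holds, the \emph{second} one, $G\varepsilon'\circ\eta G=\id_G$, follows automatically; this is special to the case where $\eta$ and $\varepsilon$ are isomorphisms (for a general adjunction one would have to correct both triangle identities and they are independent). The argument is: whisker and paste the first identity appropriately, use that $\eta$ is invertible to cancel, and conclude. I would cite this as the standard lemma (e.g.\ from \cite[\textsection IV.4]{MacLane} or \cite[Theorem~1.9]{GurskiBieq}) rather than reprove it, and I would present the correction formula for $\varepsilon'$ and the verification of the first triangle identity as the heart of the proof, deferring the routine interchange-law bookkeeping to a remark that it is most transparent in string-diagram notation. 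For the converse direction I would simply observe that the data of an adjoint equivalence forgets to the data of an equivalence, so there is nothing to prove.
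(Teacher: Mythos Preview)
Your proposal is correct and matches the paper's approach exactly: the paper does not give a detailed proof but only the one-sentence sketch ``given an equivalence $f$ witnessed by the data of $g$, $\eta$ and $\varepsilon$, one could tweak $\varepsilon$ to obtain $\widetilde\varepsilon$ such that $g$, $\eta$ and $\widetilde\varepsilon$ witness that $f$ is an adjoint equivalence,'' deferring the verification to the cited references and to string calculus. Your explicit correction formula for $\varepsilon'$ is a bit garbled as written (and you acknowledge this), but the strategy---keep $G$ and $\eta$, replace $\varepsilon$ by an invertible composite, verify one triangle identity by interchange, and deduce the second from invertibility of $\eta$---is the standard one and is precisely what the paper intends.
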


The idea is that, by definition, every adjoint equivalence is an equivalence. Vice versa given an equivalence $f$ witnessed by the data of $g$, $\eta$ and $\varepsilon$, one could tweak $\varepsilon$ to obtain $\widetilde\varepsilon$ such that $g$, $\eta$ and $\widetilde\varepsilon$ witness that $f$ is an adjoint equivalence.

\subsubsection{The walking equivalence and adjoint equivalence}

We seek an indexing shape -- which we will denote by $\cE$ and features e.g.~in \cite[\textsection 3]{lack1} as $E$ -- parametrizing equivalences in a $2$-category. The construction of $\cE$ relies on the construction of the auxiliary category $\cP$, which in turns relies on the notion of suspension. Given a category $\cD$, we denote by $\Sigma\cD$ the \emph{suspension $2$-category}, which consists of two objects and one non-trivial hom-category given by $\cD$ (see e.g.~\cite[Definition 1.2]{NerveSuspension} for more details). This defines a functor $\Sigma\colon\cat\to\twocat$.

\begin{const}[The walking equivalence]
\label{constE}
We denote by $\cE$ the \emph{walking equivalence}, which is obtained from $\cP$ by freely adding
$2$-isomorphisms
    \[\id_a\cong g\circ f \quad\text{ and }\quad f\circ g\cong \id_b.\]
    The $2$-category $\cE$ can be understood as the pushout of $2$-categories
\[
\begin{tikzcd}[column sep=3.15cm]
\partial\cC_2\amalg\partial\cC_2\arrow[r,"{[(f\circ g, \id_b), (\id_a, g\circ f)]}"]\arrow[d, "\cong" swap]
\arrow[dddr, phantom, "\pushout", very near end, yshift=0.2cm, xshift=0.1cm]&\cP\arrow[ddd]\\
\Sigma\partial\cC_1\amalg\Sigma\partial\cC_1\arrow[d]&\\
\Sigma\cP\amalg\Sigma\cP\arrow[d]&\\
\Sigma\cI\amalg\Sigma\cI\arrow[r]&\cE
\end{tikzcd}
\]
Here, $\cC_2$ denotes the walking $2$-morphism, which consists of a $2$-morphism between two parallel $1$-morphisms, and the left vertical maps are suspensions of canonical maps that define $\cP$ and $\cI$ as pushouts in \cref{constP,constI}.
We see that $\Ob\cE\cong\{a, b\}$, but the hom-categories of $\cE$ can't be easily described.
\end{const}

By design, walking equivalence $\cE$ detects equivalences in the following sense.

\begin{rmk}
A $1$-morphism $F\colon A\to B$ in a $2$-category $\cC$ is an equivalence if and only if the functor $F\colon\cC_1\to\cC$ extends to a functor $\cE\to\cC$ along the inclusion $\cC_1\hookrightarrow \cE$. Meaning, if
there is a solution to the lifting problem of $2$-categories
\[\begin{tikzcd}
{\cC_1}\arrow[r,"F"]\arrow[d,hook, "f" swap]&\cC\\
\cE\arrow[ru,dashed]
\end{tikzcd}\]
\end{rmk}

By contrast, we can also determine the indexing shape -- which we will denote by $\cE^{\mathrm{adj}}$ and features e.g.~in \cite[\textsection 6]{lack2} as $E'$ -- parametrizing adjoint equivalences in a $2$-category.

\begin{const}[The walking adjoint equivalence]
\label{defEadj}
We denote by $\cE^{\mathrm{adj}}$ the \emph{walking adjoint equivalence}, which is obtained from $\cE$ by imposing the relations
\[\varepsilon f\circ f\eta=\id_f\quad\text{ and }\quad g\varepsilon\circ\eta g=\id_g.\] 
 The $2$-category $\cE^{\mathrm{adj}}$ can be understood as the pushout of $2$-categories
\[\begin{tikzcd}[column sep=3.15cm]
\partial \cC_3\amalg\partial\cC_3\arrow[r,"{[[\varepsilon f\circ f\eta, \id_f],[\id_g, g\varepsilon\circ\eta g]]}"]\arrow[d,"" swap]
\arrow[dr, phantom, "\pushout", very near end, yshift=0.2cm, xshift=0.1cm]&\cE\arrow[d]\\
\cC_2\amalg\cC_2\arrow[r]&\cE^{\mathrm{adj}}
\end{tikzcd}\]
Here $\partial\cC_3$ denotes the $2$-category consisting of two parallel $2$-morphisms the vertical map is a coproduct of two copies of the canonical \emph{folding} map $\partial\cC_3\to\cC_2$.
The set of objects of $\cE^{\mathrm{adj}}$ is $\Ob\cE^{\mathrm{adj}}:=\{a, b\}$ and the hom-categories
\[\cE^{\mathrm{adj}}(x,x)\cong\cE^{\mathrm{adj}}(x, y)\cong\cE^{\mathrm{adj}}( y, y)\cong\cE^{\mathrm{adj}}( y, x)\cong \chaos\mathbb N.\]
are all isomorphic to $\chaos\mathbb{N}$, the category with countably many objects and a unique morphism between any two objects.
\end{const}

By design, walking adjoint equivalence $\cE^{\mathrm{adj}}$ detects equivalences in the following sense.

\begin{rmk}
\label{Eadjcorepresents}
A $1$-morphism $F\colon A\to B$ in a $2$-category $\cC$ is an adjoint equivalence if and only if the functor $F\colon\cC_1\to\cC$ extends to a functor $\cE^{\mathrm{adj}}\to\cC$ along the inclusion $\cC_1\hookrightarrow \cE^{\mathrm{adj}}$. Meaning, if
there is a solution to the lifting problem of $2$-categories
\[\begin{tikzcd}
{\cC_1}\arrow[r,"F"]\arrow[d,hook,"f" swap]&\cC\\
\cE^{\mathrm{adj}}\arrow[ru,dashed]
\end{tikzcd}\]
\end{rmk}

Although we know by \cref{thmadjointeq} that equivalences and adjoint equivalences define the same notion, there is a sense in which the notion of adjoint equivalence is more \emph{coherent} than the basic equivalence. This is manifested at the level of indexing shapes $\cE$ and $\cE^{\mathrm{adj}}$: in $\cE$ the morphisms $\eta$ and $\varepsilon$ that have been added from $\cP$ are unrelated, and the extra coherence relation is then added in $\cE^{\mathrm{adj}}$. A precise formulation of this fact will be given in \cref{Enoncoh,Eadjcoh}, where we will discuss that $\cE^{\mathrm{adj}}$ is a \emph{contractible} $2$-category while $\cE$ is not. Note that, although perhaps counterintuitively, this is \emph{not} a contradiction to \cref{thmadjointeq}.

\begin{digression}
There is a notion of \emph{adjunction} in a $2$-category, meaning a generalization of the idea of an (adjoint) equivalence for which the $2$-morphisms $\eta$ and $\varepsilon$ are not necessarily invertible. This goes back to \cite[\textsection 2]{KellyAdjEnriched} and \cite[\textsection 2]{KellyStreet}. The corresponding corepresenting object, meaning what could be referred to as the \emph{walking adjunction}, is studied in \cite{SchanuelStreet} and also reused in \cite[Remark 3.3.8]{RiehlVerityMonads}. An alternative construction of $\cE^{\mathrm{adj}}$ from \cref{defEadj} is to impose the invertibility of $2$-morphisms corepresenting the unit and counit in the walking adjunction.
\end{digression}

\subsubsection{The fundamental category of a $2$-category}

The equivalence relation for objects in a $2$-category can also be tested as isomorphism relation in a suitable category or equality in a suitable set.

\begin{defn}
Given a $2$-category $\cC$, the \emph{fundamental category} of $\cC$ is the category \[\Pi_1\cC=((\Pi_0)_*)\cC\]
obtained by base change along the product-preserving functor $\Pi_0\colon\cat\to\set$. More explicitly,
its set of objects is
\[\Ob(\Pi_1\cC)=\Ob\cC\]
and its hom-sets are
\[(\Pi_1\cC)(A,B)=\Pi_0(\cC(A,B)).\]
This defines a product-preserving functor $\Pi_1\colon2\cat\to\cat$. 
\end{defn}

\begin{rmk}
\label{FundCatEqui2cat}
Given a $2$-category $\cC$ and $A,B\in\Ob\cC$,
the following are equivalent.
\begin{enumerate}[leftmargin=*]
\setcounter{enumi}{-1}
\item There is an equality
    \[A=B\text{ in the fundamental set }\Pi_0\Pi_1\cC.\] 
        \item There is an isomorphism
    \[A\cong B\text{ in the fundamental category }\Pi_1\cC.\]
    \item There is an equivalence
    \[A\simeq B\text{ in the $2$-category }\cC.\]
    \end{enumerate}
\end{rmk}

\subsection{Equivalence of $2$-categories and inside a $3$-category}

\subsubsection{Biequivalences of $2$-categories}
The notion of biequivalence of $2$-categories is classical and appears e.g.\ as \cite[\textsection 1.33]{StreetFibBicat} and \cite[\textsection 3]{lack1}.
\begin{defn}[Biequivalence of $2$-categories]
\label{defbiequi}
A $2$-functor $F\colon \cA\to \cB$ is a \emph{biequivalence of $2$-categories}, and we write  $F\colon \cA\simeq_2 \cB$, if 
\begin{enumerate}[leftmargin=*]
    \item the $2$-functor $F$ is \emph{surjective on objects up to equivalence}, meaning that for every object $b\in\Ob\cB$ there exists an object $a\in\Ob\cA$ and an equivalence 
    \[b\simeq Fa\text{ in the $2$-category } \cB;\]
    \item and the $2$-functor $F$ is a \emph{hom-wise equivalence},
    meaning that for all objects $a,a'\in\Ob \cA$
    the $2$-functor $F$ induces equivalences
    \[F_{a,a'}\colon \cA(a,a')\simeq \cB(Fa,Fa')\text{ of categories}.\]
    Two $2$-categories are \emph{biequivalent}, and we write $\cA\simeq_2 \cB$, if there exists a biequivalence between them.
\end{enumerate} 
\end{defn}

We immediately make use and explore the meaning of biequivalence of $2$-categories with the following two propositions, which feature in \cite{lack2} and are there attributed to Joyal. They formalize the idea that $\cE$ is not \emph{coherent}, while $\cE^{\mathrm{adj}}$ is.
\begin{prop}
\label{Enoncoh}
The $2$-category $\cE$ is not \emph{contractible}, namely
\[\cE\not\simeq_2\cC_0.\]
\end{prop}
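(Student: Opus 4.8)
The plan is to show that $\cE$ admits no biequivalence to the terminal $2$-category $\cC_0$ by exhibiting an invariant of biequivalence that distinguishes them. The cleanest such invariant is the hom-category $\cE(a,a)$ (or equivalently any hom-category of $\cE$, since $a$ and $b$ are equivalent in $\cE$): if there were a biequivalence $\cE \simeq_2 \cC_0$, then by the hom-wise equivalence condition in \cref{defbiequi} the category $\cE(a,a)$ would be equivalent to $\cC_0(*,*) = \{*\}$, i.e.\ $\cE(a,a)$ would be a contractible category (equivalent to the terminal category). So it suffices to show that $\cE(a,a)$ is \emph{not} equivalent to the terminal category; concretely, it suffices to exhibit two objects of $\cE(a,a)$ that are not isomorphic in $\cE(a,a)$, since a category equivalent to $\{*\}$ has all objects isomorphic (its fundamental set is a point, cf.\ \cref{FundCatEqui1cat}).

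First I would pin down enough of the structure of $\cE(a,a)$ to name such objects. From \cref{constE}, $\cE$ is built from $\cP$ by freely adjoining $2$-isomorphisms $\eta\colon \id_a \cong g\circ f$ and $\varepsilon\colon f\circ g\cong \id_b$ and nothing else — in particular \emph{no} triangle identities are imposed (that is exactly the difference between $\cE$ and $\cE^{\mathrm{adj}}$). Inside $\cE(a,a)$ we then have at least the two objects $\id_a$ and $g\circ f$, together with the isomorphism $\eta$ between them, as well as further objects like $(g\circ f)^{\circ 2}$, and more subtly, the composite endo-$2$-morphism of $\id_a$ given by $g\varepsilon f \circ \eta$ (using $\eta$ to pass $\id_a \to gf = g\id_b f \to g(fg)f = (gf)(gf)$... ) — the point being that without the triangle identity this $2$-morphism need not be the identity. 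The honest way to make this rigorous is to produce a $2$-category $\cC$ together with a $1$-morphism $F\colon A \to B$ that is an equivalence but whose chosen witnessing data $(G,\eta,\varepsilon)$ fails the triangle identity; then by the lifting property characterizing $\cE$ there is a $2$-functor $\cE \to \cC$, and applying it to the nontrivial endo-$2$-morphism of $\id_a$ in $\cE$ shows that endo-$2$-morphism is not the identity, hence $\id_a$ has a nontrivial automorphism in $\cE(a,a)$ while the terminal category's objects have only the identity automorphism. A standard source of such a $\cC$: take a group $\Gamma$ with a nontrivial element $\gamma$, regard the one-object groupoid $B\Gamma$ as a hom-category inside a suitable $2$-category of ``$\Gamma$-torsors'' or just inside $\Sigma B\Gamma$ suspended appropriately, and choose the witnessing $2$-isomorphisms twisted by $\gamma$ so the triangle identity fails by exactly $\gamma$.

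Assembling the argument: suppose for contradiction $\varphi\colon \cE \simeq_2 \cC_0$. By \cref{defbiequi}(2), $\varphi_{a,a}\colon \cE(a,a) \simeq \cC_0(\varphi a,\varphi a) = \{*\}$ is an equivalence of categories, so by \cref{defequiofcat}(2) it is fully faithful, whence $\cE(a,a)(\id_a,\id_a) \cong \{*\}(*,*) = \{*\}$, i.e.\ $\id_a$ has no non-identity endo-$2$-morphism in $\cE$. But the previous paragraph exhibits one. Contradiction; hence $\cE \not\simeq_2 \cC_0$.

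The main obstacle is the second paragraph: producing a \emph{specific, verifiably nontrivial} endo-$2$-morphism of an identity $1$-morphism in $\cE$. This is genuinely the crux because $\cE$ is defined by a pushout whose hom-categories "can't be easily described" (as the excerpt itself notes), so one cannot just read off $\cE(a,a)$ — one must detect the nontriviality \emph{externally}, via a cleverly chosen target $2$-category $\cC$ with an equivalence whose canonical witnesses violate the triangle identity. Getting the bookkeeping of $\eta$, $\varepsilon$ and whiskerings right so that the composite in question maps to the nontrivial group element (and not accidentally to the identity) is the delicate part; everything else — the contradiction with fully faithfulness, the reduction to a single hom-category — is formal.
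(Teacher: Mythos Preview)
Your approach is correct and takes a route different from both of the paper's proofs. The paper argues either model-categorically --- the folding map $\partial\cC_3\to\cC_2$ is a cofibration in Lack's model structure, and the pair $[\varepsilon f\circ f\eta,\id_f]\colon\partial\cC_3\to\cE$ does not extend over it, so $\cE\to\cC_0$ cannot be an acyclic fibration --- or via polygraphic homology, computing $H_2(\lambda\cE)\cong\mathbb Z\neq 0$. Your argument in fact unpacks what the first proof leaves implicit: the bare assertion that the lift fails is exactly the claim that $\varepsilon f\circ f\eta\neq\id_f$ in $\cE$, and the honest way to certify this is precisely your idea of mapping to a target $2$-category where the triangle identity visibly fails.

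Two simplifications make the execution cleaner. First, work in $\cE(a,b)$ rather than $\cE(a,a)$: the candidate nontrivial $2$-cell is the triangulator $\varepsilon f\circ f\eta\colon f\Rightarrow f$, which sits there without any whiskering gymnastics, and your contradiction goes through verbatim with $\cE(a,b)(f,f)$ in place of $\cE(a,a)(\id_a,\id_a)$. Second, for the target take simply $\cC=B^2\mathbb Z$ (one object, one $1$-morphism, $2$-morphisms the integers under addition); the universal property of $\cE$ gives a $2$-functor sending $f,g\mapsto\id$, $\eta\mapsto 1$, $\varepsilon\mapsto 0$, under which $\varepsilon f\circ f\eta\mapsto 1\neq 0$. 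This is morally the same obstruction the paper's second proof detects as $H_2$. What your approach buys is an elementary, model-structure-free argument; what the paper's proofs buy is brevity (the first) and a reusable numerical invariant (the second).
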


We give two independent proofs of this fact.

\begin{proof}[Model categorical proof]
The lifting problem in $2\cat$
\[\begin{tikzcd}[column sep=3.15cm]
\partial \cC_3\arrow[d, "" swap]
\arrow[r,"{[\varepsilon f\circ f\eta,\id_f]}"]&\cE\\
\cC_2\arrow[ru,dashed, "?"]&,
\end{tikzcd}\]
where the left vertical map is the folding map,
does not admit a lift. Given that the left vertical map is a cofibration in the canonical model category $2\cat$ (see \cite{lack1, lack2} for an explicit description), the map $\cE\to\cC_0$ cannot be an acyclic fibration. Since it is a fibration, it means it cannot be a weak equivalence.
\end{proof}

\begin{proof}[Polygraphic homology proof]
The chain complex $\lambda\cE$ associated to the $2$-category $\cE$ as described in \cite[\textsection 4.2]{GuettaThesis} (based on ideas from \cite[\textsection 7]{BournTour})
is of the form
\[\lambda\cE=[\quad\mathbb Z[a,b]\xleftarrow{\partial_0}\mathbb Z[f,g]\xleftarrow{\partial_1}\mathbb Z[\eta,\varepsilon]\leftarrow 0\quad].\]
Given that
\begin{align*}
    \partial_1(\varepsilon)=-f-g\quad{ and }\quad\partial_1(\eta)=f+g,
\end{align*}
the second homology of $\lambda\cE$, which is the polygraphic homology of $\cE$, is given by
\[H_2(\lambda\cE)\cong\ker[\ \partial_1\colon\mathbb Z[\eta,\varepsilon]\to\mathbb Z[f,g]\ ]\cong\mathbb Z[\eta+\varepsilon]\cong\mathbb Z \not\cong0.\]
By \cite[Proposition 4.3.3]{GuettaThesis},
we obtain $\cE\not\simeq\cC_0$, as desired. 
\end{proof}

The following is discussed in \cite[\textsection 6]{lack2}.

\begin{prop}
\label{Eadjcoh}
The $2$-category $\cE^{\mathrm{adj}}$ is \emph{contractible}, namely
\[\cE^{\mathrm{adj}}\simeq_2\cC_0.\]
\end{prop}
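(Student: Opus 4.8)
The plan is to exhibit the unique $2$-functor $\varphi\colon\cE^{\mathrm{adj}}\to\cC_0$ as a biequivalence by checking the two conditions of \cref{defbiequi} directly, using the explicit description of the hom-categories of $\cE^{\mathrm{adj}}$ given in \cref{defEadj}. Surjectivity on objects up to equivalence is immediate: $\cC_0$ has a single object, and it is hit by (the image of) either $a$ or $b$. So the content is the hom-wise condition: one must show that for each pair of objects $x,y\in\{a,b\}$ the functor $\varphi_{x,y}\colon\cE^{\mathrm{adj}}(x,y)\to\cC_0(\varphi x,\varphi y)=\cC_0(\term,\term)=\{\term\}$ is an equivalence of categories. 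By \cref{defEadj} each hom-category $\cE^{\mathrm{adj}}(x,y)$ is isomorphic to $\chaos\mathbb N$, the chaotic (indiscrete) category on countably many objects, so it suffices to observe that $\chaos\mathbb N\simeq\cC_0$: the unique functor $\chaos\mathbb N\to\cC_0$ is fully faithful, since both $\chaos\mathbb N(i,j)$ and $\cC_0(\term,\term)$ are singletons, and surjective on objects up to isomorphism, since $\chaos\mathbb N$ is connected and hence all its objects are isomorphic. This makes $\varphi_{x,y}$ an equivalence for every $x,y$, so $\varphi$ is a biequivalence and $\cE^{\mathrm{adj}}\simeq_2\cC_0$.

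First I would recall (or cite from \cite[\textsection 6]{lack2}) the computation that identifies every hom-category of $\cE^{\mathrm{adj}}$ with $\chaos\mathbb N$; this is the only input from the structure of $\cE^{\mathrm{adj}}$ that is needed, and it is already asserted in \cref{defEadj}. Then I would prove the small lemma that $\chaos\mathbb N$ is a contractible category, i.e. $\chaos\mathbb N\simeq\cC_0$, by the fully-faithful-plus-essentially-surjective criterion of \cref{defequiofcat}. Finally I would assemble these facts: since $\varphi$ is bijective-up-to-equivalence on objects and induces equivalences on all hom-categories, it is a biequivalence by \cref{defbiequi}.

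The main obstacle — really the only nontrivial point — is making sure the identification $\cE^{\mathrm{adj}}(x,y)\cong\chaos\mathbb N$ is correct, in particular that imposing the triangle identities $\varepsilon f\circ f\eta=\id_f$ and $g\varepsilon\circ\eta g=\id_g$ on top of $\cE$ really does collapse each hom-category of $\cE$ (whose structure "can't be easily described", per \cref{constE}) down to the chaotic category, with no surviving higher relations or extra endomorphisms. One way to see this cleanly is to use \cref{Eadjcorepresents}: a functor $\cE^{\mathrm{adj}}\to\cC$ is exactly an adjoint equivalence in $\cC$, and in the terminal-hom situation of $\chaos\mathbb N$ one can check that every $1$-morphism is canonically an adjoint equivalence, pinning down the hom-categories; alternatively one appeals directly to the computation in \cite[\textsection 6]{lack2}, which is the route I would take to keep the argument short. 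Once that identification is in hand, the rest is the routine verification sketched above.
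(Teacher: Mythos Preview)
Your proposal is correct and follows essentially the same approach as the paper: verify that the unique $2$-functor $\varphi\colon\cE^{\mathrm{adj}}\to\cC_0$ is a biequivalence by noting that surjectivity on objects up to equivalence is trivial and that each $\varphi_{x,y}$ is an equivalence because $\cE^{\mathrm{adj}}(x,y)\cong\chaos\mathbb N\simeq\cC_0$. The paper's proof is simply a terser version of exactly this argument, taking the identification of the hom-categories with $\chaos\mathbb N$ as given from \cref{defEadj}.
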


\begin{proof}
Consider the unique functor $\varphi\colon\cE^{\mathrm{adj}}\to\cC_0$. There is an equivalence (in fact an equality)
\[\term\simeq \varphi(a)\text{ in }\cC_0\]
and $\varphi$ induces an equivalence 
\[\varphi_{a,a}\colon\cE^{\mathrm{adj}}(a,a)\cong\chaos\mathbb N\simeq\{\id_{\term}\}
=\cC_0(\term,\term)\text{ in }\cat.\]
Similarly, the functors $\varphi_{b,b}$, $\varphi_{a,b}$ and $\varphi_{b,a}$ are equivalences of categories. So there is a biequivalence $\varphi\colon\cE^{\mathrm{adj}}\simeq\cC_0$, as desired.
\end{proof}

\begin{rmk}
\label{cylinder2}
In the context of the canonical model structure on the category $2\cat$ (see \cite{lack1, lack2} for an explicit description), the object $\cE^{\mathrm{adj}}$ is a cylinder on the terminal object in the sense of \cite[Definition 1.2.4]{hovey}. Meaning, it comes with a factorization
\[\cC_0\amalg\cC_0\hookrightarrow\cE^{\mathrm{adj}}\stackrel\simeq\longrightarrow\cC_0\]
of the folding map of $\cC_0$ into a cofibration followed by a weak equivalence (in fact acyclic fibration). Note that $\cE^{\mathrm{adj}}$ is not the unique cylinder object, and shouldn't be expected to be. In fact, two other cylinder objects are given by $\cE^{\mathrm{adj}}\amalg_{\cC_0} \cE^{\mathrm{adj}}$ and $\cE^{\mathrm{adj}}\amalg_{\cC_1} \cE^{\mathrm{adj}}$, which are obtained by gluing two copies of $\cE^{\mathrm{adj}}$, respectively, identifying $a$ with $b$ or $f$ with $g$. Instead, the category $\cI$ -- regarded as a discrete $2$-category -- is \emph{not} a cylinder object in this context, as it is not cofibrant.
\end{rmk}

We now explore to which extent the notion of biequivalence is related to the existence of an inverse. One direction always holds:

\begin{rmk}
\label{biequicof}
A $2$-functor $F\colon \cA\to \cB$ between $2$-categories is a biequivalence if there exists a $2$-functor $G\colon \cB\to \cA$ and equivalences
    \[\eta\colon\id_\cA\simeq G\circ F\text{ in }2\cat(\cA,\cA)\text{ and }\varepsilon\colon F\circ G\simeq \id_\cB\text{ in }2\cat(\cB,\cB).\]
\end{rmk}

The hope for having equivalence of the two statements, in general, turns out to be false. We now recall how the other implication may fail.

\begin{rmk}
\label{ExampleLack}
If a $2$-functor $F\colon \cA\to \cB$ between $2$-categories is a biequivalence, it is not generally true that there exists a $2$-functor $G\colon \cB\to \cA$ and $2$-equivalences
    \[\eta\colon\id_\cA\simeq G\circ F\text{ in }2\cat(\cA,\cA)\text{ and }\varepsilon\colon F\circ G\simeq\id_\cB\text{ in }2\cat(\cB,\cB).\]
    It is shown in \cite[Example~3.1]{lack1} that a counterexample is given by a $2$-functor of the form $F\colon\cA\to B(\mathbb Z/2)$. Here, $\cA$ is the $2$-category whose underlying category is $B\mathbb{Z}$, with a $2$-morphism from the $1$-morphism $m$ to the $1$-morphism $n$ if and only if $m-n$ is even.
\end{rmk}

An alternative option is to consider an appropriate $2$-category $2\cat_{\mathrm{ps}}(\cA,\cB)$ of pseudo-functors from $\cA$ to $\cB$, for any $2$-categories $\cA$ and $\cB$. The following is mentioned e.g.~in \cite[\textsection 1.33]{StreetFibBicat} and in \cite[\textsection 3]{lack1}.

\begin{thm}
\label{biequivalenceweakinverse}
A $2$-functor $F\colon \cA\to \cB$ between $2$-categories is a \emph{biequivalence} if and only if there exists a pseudo-functor $G\colon \cB\to \cA$ and $2$-equivalences
    \[\eta\colon\id_\cA\simeq g\circ f\text{ in }2\cat_{\mathrm{ps}}(\cA,\cA)\text{ and }\varepsilon\colon f\circ g\simeq \id_\cB\text{ in }2\cat_{\mathrm{ps}}(\cB,\cB).\]
\end{thm}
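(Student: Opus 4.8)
I would prove the two implications separately, following the template of the classical $1$-categorical fact that an equivalence of categories admits a weak inverse, but now bookkeeping one extra layer of $2$-cells and accepting that the inverse can only be produced as a pseudo-functor, and that the invertibility of the unit and counit can only be witnessed in the weaker $2$-categories $2\cat_{\mathrm{ps}}(\cA,\cA)$ and $2\cat_{\mathrm{ps}}(\cB,\cB)$.

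\emph{Sufficiency.} Suppose a pseudo-functor $G$ and equivalences $\eta\colon\id_\cA\simeq G\circ F$, $\varepsilon\colon F\circ G\simeq\id_\cB$ as in the statement are given; this is the analogue of \cref{biequicof}. First I would note that the component $\varepsilon_b\colon F(Gb)\to b$ of the pseudo-natural equivalence $\varepsilon$ is an equivalence in $\cB$ (components of an equivalence of pseudo-natural transformations are equivalences in the target $2$-category), so $b\simeq F(Gb)$ and $F$ is surjective on objects up to equivalence. For the hom-wise condition, fix $a,a'\in\Ob\cA$ and consider the composable functors
\[U=F_{a,a'},\qquad V=G_{Fa,Fa'},\qquad W=F_{GFa,GFa'},\]
so that, since $F$ is strict, $V\circ U=(G\circ F)_{a,a'}$ and $W\circ V=(F\circ G)_{Fa,Fa'}$. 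The pseudo-naturality $2$-cells of $\eta$ assemble into a natural isomorphism $(\eta_a)^*\circ(G\circ F)_{a,a'}\cong(\eta_{a'})_*$ between functors $\cA(a,a')\to\cA(a,GFa')$; as $\eta_a,\eta_{a'}$ are equivalences in $\cA$, pre- and post-composition with them are equivalences of categories by \cref{thmadjointeq}, so $(G\circ F)_{a,a'}=V\circ U$ is an equivalence of categories, and symmetrically so is $(F\circ G)_{Fa,Fa'}=W\circ V$. Since equivalences of categories satisfy the $2$-out-of-$6$ property (being the weak equivalences of the canonical model structure on $\cat$), it follows that $U=F_{a,a'}$ is an equivalence of categories, so $F$ is a biequivalence.

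\emph{Necessity.} Assume $F$ is a biequivalence. Using surjectivity on objects up to equivalence together with \cref{thmadjointeq}, I would choose for each $b\in\Ob\cB$ an object $Gb\in\Ob\cA$ and an adjoint equivalence $e_b\colon F(Gb)\xrightarrow{\,\simeq\,}b$ in $\cB$, with chosen inverse $\bar e_b$ and unit/counit isomorphisms $\mu_b\colon\id\cong\bar e_b\circ e_b$, $\nu_b\colon e_b\circ\bar e_b\cong\id$. For $k\colon b\to b'$ in $\cB$, essential surjectivity of the equivalence $F_{Gb,Gb'}$ lets me pick $Gk\in\cA(Gb,Gb')$ with an isomorphism $\phi_k\colon F(Gk)\cong\bar e_{b'}\circ k\circ e_b$; for $\gamma\colon k\Rightarrow k'$, full faithfulness of $F_{Gb,Gb'}$ lets me define $G\gamma\colon Gk\Rightarrow Gk'$ as the unique $2$-cell with $F(G\gamma)=\phi_{k'}^{-1}\circ(\bar e_{b'}\gamma e_b)\circ\phi_k$. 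The unitor $G\id_b\cong\id_{Gb}$ and the compositor $G(l\circ k)\cong Gl\circ Gk$ are likewise the unique $2$-cells that $F$ sends, respectively, to $\mu_b^{-1}\circ\phi_{\id_b}$ and to the evident composite of $\phi_{lk}$, a whiskered copy of $\nu_{b'}^{-1}$, and $\phi_l^{-1},\phi_k^{-1}$. Next I would let $\varepsilon\colon F\circ G\simeq\id_\cB$ have components $\varepsilon_b=e_b$, with pseudo-naturality $2$-cells built from the $\phi_k$ and $\nu_{b'}$; as each $e_b$ is an equivalence, $\varepsilon$ is an equivalence in $2\cat_{\mathrm{ps}}(\cB,\cB)$. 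Finally I would define $\eta\colon\id_\cA\simeq G\circ F$ by choosing, via essential surjectivity of $F_{a,GFa}$, a $1$-morphism $\eta_a\colon a\to GFa$ with an isomorphism $\psi_a\colon F(\eta_a)\cong\bar e_{Fa}$; since $\bar e_{Fa}$ is an equivalence in $\cB$ and a biequivalence reflects equivalences (lift a quasi-inverse of $F(\eta_a)$ along the fully faithful $F_{GFa,a}$ and apply full faithfulness once more), $\eta_a$ is an equivalence in $\cA$, and the pseudo-naturality $2$-cells of $\eta$ arise --- again by full faithfulness of $F$ on the relevant hom-categories --- from the $\phi_{Fh}$, the $\psi$'s, and $\nu_{Fa}$; so $\eta$ is an equivalence in $2\cat_{\mathrm{ps}}(\cA,\cA)$.

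\emph{The main difficulty.} Everything above is immediate on underlying data; the part I expect to occupy the bulk of a complete write-up is checking the coherence conditions: that $G$ is genuinely a pseudo-functor (associativity and unit coherence of the compositor and unitor), and that $\eta$ and $\varepsilon$ are genuinely pseudo-natural transformations with the claimed inverses satisfying the modification axioms. The crucial simplification is that $F$ is locally faithful, so any such equation of $2$-cells in $\cA$ may be verified after applying $F$, where it reduces to an identity among the chosen isomorphisms $\phi_\bullet,\mu_\bullet,\nu_\bullet,\psi_\bullet$ and the strict functoriality of $F$, and then holds by the interchange law, naturality of whiskering, and the triangle identities for the adjoint equivalences $e_b$. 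Alternatively, the statement is the instance for $2$-categories of the bicategorical Whitehead theorem, and a treatment in that spirit can be found in \cite[\textsection 1.33]{StreetFibBicat} and \cite[\textsection 3]{lack1}.
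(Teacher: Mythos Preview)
The paper does not actually prove this theorem: it records the statement with the remark that it ``is mentioned e.g.~in \cite[\textsection 1.33]{StreetFibBicat} and in \cite[\textsection 3]{lack1}'' and then moves on. So there is no proof in the paper to compare against; your proposal supplies precisely the argument the paper delegates to the literature, and indeed you cite the same two sources at the end.

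Your argument is correct and is the standard one. The sufficiency direction via the $2$-out-of-$6$ property for equivalences of categories is a clean way to avoid tracking an explicit inverse of $F_{a,a'}$; it is worth remarking that one does not even need the full strength of $2$-out-of-$6$ here, since from $(\eta_a)^*\circ(GF)_{a,a'}\cong(\eta_{a'})_*$ alone one already gets that $V\circ U$ is an equivalence, whence $U$ has a left inverse up to isomorphism, and symmetrically $\varepsilon$ gives it a right inverse up to isomorphism --- but your route is perfectly valid. The necessity direction is the expected construction of a pseudo-inverse by choosing adjoint equivalences $e_b$ object-by-object and then transporting everything through local full faithfulness of $F$; your identification of the coherence verifications as the bulk of the work, and of local faithfulness of $F$ as the device that reduces each such verification in $\cA$ to a computation in $\cB$, is exactly right. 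This is the bicategorical Whitehead theorem as treated in the references both you and the paper invoke.
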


\subsubsection{$3$-categories}

The viewpoint from \cref{biequicof,biequivalenceweakinverse}
is taken to define the notion of a biequivalence in any $3$-category. 

Recall that a \emph{$3$-category} $\cC$ consists of
\begin{itemize}[leftmargin=*]
    \item a \emph{set of objects} $\Ob\cC$ ,
    \item for every $c,c'\in\Ob\cC$ a \emph{hom-$2$-category} $\cC(c,c')$,
    \item for every $c\in\Ob\cC$, an \emph{identity $2$-functor} 
\[\id_c\colon\{\term\}\to\cC(c,c);\]
\item for every $c,c',c''\in\Ob\cC$, a \emph{composition $2$-functor} 
\[\circ\colon\cC(c,c')\times\cC(c',c'')\to\cC(c,c'');\]
\end{itemize}
satisfying same axioms as \eqref{associativity} and \eqref{unitality} read in $2$-categories, as opposed to sets.

In particular, the data of a $3$-category
$\cC$ determines a set $1\Mor\cC$ of \emph{$1$-morphisms}, a set $2\Mor\cC$ of \emph{$2$-morphisms} and a set $3\Mor\cC$ of \emph{$3$-morphisms} given by
\[1\Mor\cC\coloneqq\coprod_{c,c'\in\Ob\cC}\Ob(\cC(c,c')),\quad\quad2\Mor\cC\coloneqq\coprod_{c,c'\in\Ob\cC}1\Mor(\cC(c,c'))\]
\[\text{ and }\quad3\Mor\cC\coloneqq\coprod_{c,c'\in\Ob\cC}2\Mor(\cC(c,c')).\]
Amongst other structual operators, one also gets \emph{composition} operators $\comp_p \colon q\Mor\cC \times_{p\Mor\cC} q\Mor\cC \to q\Mor\cC$
    defined for all $q > p \geq 0$ and all pairs of $q$-cells $(f,g)$ such that $s_p(g) = t_p(f)$.
     
Given categories $\cC$ and $\cD$, a \emph{$3$-functor} $F\colon\cC\to\cD$ consists of
\begin{itemize}[leftmargin=*]
    \item a function on objects
\[F\colon\Ob\cC\to\Ob\cD\]
\item and a $2$-functor on hom-$2$-categories for every $c,c'\in\Ob\cC$
\[F_{c,c'}\colon\cC(c,c')\to\cD(Fc,Fc'),\]
\end{itemize}
satisfying the \emph{functorial properties} from \eqref{functorial1} and \eqref{functorial2}.

For instance, the large $2$-category $2\cat$ of $2$-categories, $2$-functors and $2$-natural transformations can also be regarded as a $3$-category: the (large) $3$-category of $2$-categories, $2$-functors, $2$-natural transformations, and modifications, with usual composition of functors and natural transformations and modifications. One could also consider 
$2\cat_{\mathrm{ps}}$: the large $3$-category of $2$-categories, pseudo-functors, pseudo-natural transformations, and modifications.

\subsubsection{Biequivalences and biadjoint biequivalences in a $3$-category}
The definition of biequivalence in a $3$-category appears e.g.~as \cite[Definition 3.5]{GPS}, and also plays a central role in \cite[\textsection2]{GurskiBieq}.

\begin{defn}[Biequivalence in a $3$-category $\cC$]
\label{DefGenBieq}
A $1$-morphism $F\colon A\to B$ in a $3$-category $\cC$ is a \emph{biequivalence}, and we write $F\colon A\simeq_2 B$,  if
there exists a $1$-morphism $G\colon B\to A$ in $\cC$ and equivalences
  \[\eta\colon\id_A\simeq G\circ F\text{ in }\cC(A,A)\text{ and }\varepsilon\colon F\circ G\simeq \id_B\text{ in }\cC(B,B).\]
Two objects are \emph{$2$-equivalent}, and we write $A\simeq_2 B$, if there exists an equivalence between them.
\end{defn}

\begin{rmk}
For a $2$-functor, the notion of biequivalence in $\cC$ specializes to that of biequivalence of $2$-categories when read in the case of the large $3$-category $\cC=2\cat_{\mathrm{ps}}$.
However, when read in the large $3$-category $2\cat$, it does \emph{not} specialize to \cref{DefGenBieq} (see \cref{ExampleLack}, cf.\ also the discussion of \cite[\textsection 7.5]{LackCompanion}).
\end{rmk}

It is easily verified that the relation $\simeq_2$ of being biequivalent for objects in a $3$-category $\cC$ is an equivalence relation.

There is a characterization of equivalences in a $3$-category in terms of biadjoint biequivalences, which we now recall. The following is \cite[Definition~2.3]{GurskiBieq} (see also \cite[Definition 3.12]{CCKS}).

\begin{defn}[Biadjoint biequivalence \`a la Gurski in a $3$-category $\cC$] 
A $1$-morphism $F\colon A\to B$ in a $3$-category $\cC$ is a \emph{biadjoint biequivalence} in $\cC$ if there exists a $1$-morphism $G\colon B\to A$ in $\cC$ and $2$-morphisms between $1$-morphisms
    \[\eta\colon\id_A\simeq G\circ F\text{ in }\cC(A,A)\text{ and }\varepsilon\colon F\circ G\simeq\id_B\text{ in }\cC(B,B).\]
    and $3$-morphisms
    \[\Phi\colon \varepsilon F\circ F\eta\cong\id_F\text{ in }\cC(A,B)(F,F)\quad\mbox{ and  }\quad \Psi\colon G\varepsilon\circ\eta G\cong\id_G\text{ in }\cC(B,A)(G,G)\]
    and satisfying the \emph{swallowtail relations}
    \[    \varepsilon \comp_1 (F \comp_0 \Psi) = \varepsilon \comp_1 (\Phi \comp_0 G)\quad\mbox{ and  }
           \quad (G \comp_0 \Phi) \comp_1 \eta = (\Psi \comp_0 F) \comp_1 \eta.\]
\end{defn}

The following is \cite[Theorem~4.5]{GurskiBieq}.

\begin{thm}
A $1$-morphism $F\colon A\to B$ in a $3$-category $\cC$ is a biequivalence if and only if it is a biadjoint biequivalence in $\cC$.
\end{thm}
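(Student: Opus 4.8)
The plan is to mimic the structure of the proof of \cref{thmadjointeq} (the $2$-categorical statement ``equivalence $\iff$ adjoint equivalence''), but one categorical dimension higher, replacing the ad hoc $2$-morphism manipulations there by $3$-morphism manipulations governed by string/surface calculus. One implication is free: a biadjoint biequivalence in $\cC$ is in particular a biequivalence, since the definition of biadjoint biequivalence includes the data $G,\eta,\varepsilon$ with $\eta,\varepsilon$ equivalences in the relevant hom-$2$-categories, which is exactly what \cref{DefGenBieq} demands. So the content is entirely in the forward direction.

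For the forward direction, I would start from a biequivalence $F\colon A\to B$ witnessed by $G$, $\eta\colon \id_A\simeq G\circ F$ and $\varepsilon\colon F\circ G\simeq \id_B$, where ``$\simeq$'' here means $\eta$ and $\varepsilon$ are themselves equivalences inside the hom-$2$-categories $\cC(A,A)$ and $\cC(B,B)$ — i.e.\ they admit inverses $\bar\eta,\bar\varepsilon$ up to invertible $3$-cells. The strategy is a two-stage coherentification. First, imitate the proof of \cref{thmadjointeq}: replace $\varepsilon$ by a ``tweaked'' counit $\widetilde\varepsilon$ so that one of the two swallowtail identities, say $\varepsilon\comp_1(\Phi\comp_0 G)\comp_1\eta=(\Psi\comp_0 F)\comp_1\eta$-type triangle, holds strictly at the level of $2$-morphisms (this is the ``$\varepsilon F\circ F\eta=\id_F$'' analogue). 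Concretely, define $\widetilde\varepsilon$ by precomposing $\varepsilon$ with the appropriate whiskered composite of $\eta$, its chosen inverse, and the associator/unitor coherence $2$-cells of $\cC$, exactly as in Gurski's argument; the invertible $3$-cells witnessing the new triangle identity are assembled from the coherence $3$-cells of $\cC$ together with the $3$-cells exhibiting $\eta\bar\eta\cong\id$, $\bar\eta\eta\cong\id$. Second, having one triangle-type coherence available, run the standard ``swallowtail completion'' argument (this is the content of \cite[Theorem~4.5]{GurskiBieq}): one produces the two $3$-cells $\Phi\colon \varepsilon F\circ F\eta\cong\id_F$ and $\Psi\colon G\varepsilon\circ\eta G\cong\id_G$ and then modifies $\Psi$ (or $\Phi$) by composing with an invertible $3$-cell so that both swallowtail relations $\varepsilon\comp_1(F\comp_0\Psi)=\varepsilon\comp_1(\Phi\comp_0 G)$ and $(G\comp_0\Phi)\comp_1\eta=(\Psi\comp_0 F)\comp_1\eta$ hold on the nose — the key point being that, because $\eta$ and $\varepsilon$ are equivalences, the operations of whiskering with them are ``invertible'' up to coherent $3$-cells, so the discrepancy $3$-cell between the two sides of a swallowtail relation can be absorbed into a redefinition of $\Phi$ or $\Psi$.

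I would organize the write-up so that the bulk of the bookkeeping is deferred to \cite{GurskiBieq}: state that the only thing needing checking is that our hypothesis (biequivalence in the sense of \cref{DefGenBieq}) supplies exactly the raw data that Gurski's Theorem~4.5 upgrades, namely a pair of $1$-morphisms and a pair of $2$-morphisms $\eta,\varepsilon$ each of which is an internal equivalence in the appropriate hom-$2$-category, and that the rest of the argument is internal to a single $3$-category and hence goes through verbatim. For the converse I would simply observe, in one sentence, that forgetting $\Phi$, $\Psi$ and the swallowtail relations turns a biadjoint biequivalence into a biequivalence.

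The main obstacle is the first coherentification step: getting even one of the swallowtail/triangle relations to hold strictly requires a careful surface-diagram computation with the associativity and unitality coherence $2$-cells and $3$-cells of the ambient $3$-category $\cC$, and tracking that the ``tweaked'' counit $\widetilde\varepsilon$ is still an equivalence in $\cC(B,B)$. In a genuine proof one would either draw the surface diagrams or cite Gurski's computation; in this survey the honest move is to reduce to \cite[Theorem~4.5]{GurskiBieq} and to \cref{thmadjointeq}, emphasizing that the new phenomenon relative to the $2$-categorical case is precisely the appearance of the \emph{two} swallowtail relations, which is why the coherent notion in dimension three is ``biadjoint biequivalence'' rather than merely ``adjoint biequivalence''.
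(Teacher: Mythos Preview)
Your proposal is correct and in fact goes beyond what the paper does: the paper gives no proof at all, simply citing \cite[Theorem~4.5]{GurskiBieq}, whereas you sketch the structure of Gurski's argument (tweak the counit to secure one triangulator, then adjust $\Phi$ or $\Psi$ to enforce the swallowtail relations) before deferring to the same reference. Since your endgame is also to cite Gurski for the detailed surface-diagram bookkeeping, the two approaches coincide.
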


\subsubsection{The walking biequivalence and bidajoint biequivalence}

We seek an indexing shape -- which we will denote by $\mathrm{bi}\cE$ -- parametrizing biequivalences in a $3$-category. Given a $2$-category $\cD$, we denote by $\Sigma\cD$ the \emph{suspension $3$-category}, which consists of two objects and one non-trivial hom-$2$-category given by $\cD$ (see e.g.~\cite[\textsection B.6.5]{AraMaltsiniotisJoin}). This defines a functor $\Sigma\colon2\cat\to3\cat$.

\begin{const}[The walking biequivalence]
We denote by $\mathrm{bi}\cE$
the \emph{walking biequivalence}, which is obtained from $\cP$ by freely adding the isomorphisms
  \[\Phi\colon \varepsilon f\circ f\eta\cong\id_f\text{ in }\cC(a,b)(f,f)\quad\text{ and  }\quad \Psi\colon g\varepsilon\circ\eta g\cong\id_g\text{ in }\cC(b,a)(g,g)\]
It can be described as the pushout of $3$-categories
\[
\begin{tikzcd}[column sep=3.15cm]
\partial \cC_2\amalg\partial \cC_2\arrow[r,"{[[g\circ f, \id_x], [\id_y, f\circ g]]}"]\arrow[d, "\cong" swap]
\arrow[dddr, phantom, "\pushout", very near end, yshift=0.2cm, xshift=0.1cm]&\cP\arrow[ddd]\\
\Sigma\partial\cC_1\amalg\Sigma\partial\cC_1\arrow[d]&\\
\Sigma\cP\amalg\Sigma\cP\arrow[d]&\\
\Sigma\cE\amalg\Sigma\cE\arrow[r]&\mathrm{bi}\cE
\end{tikzcd}
\]
Here, the left vertical maps are suspensions of canonical maps that define $\cP$ and $\cE$ as pushouts in \cref{constP,constE}.
\end{const}

By design, the walking biequivalence $\mathrm{bi}\cE$ detects biequivalences in the following sense.

\begin{rmk}
A $1$-morphism $F\colon A\to B$ in a $3$-category $\cC$ is a biequivalence if and only if there is a solution in to the lifting problem of $3$-categories
\[\begin{tikzcd}
{\cC_1}\arrow[r,"F"]\arrow[d,hook,"f" swap]&\cC\\
\mathrm{bi}\cE\arrow[ru,dashed]
\end{tikzcd}\]
\end{rmk}

We can also determine the indexing shape -- which we will denote by $\mathrm{bi}\cE^{\mathrm{adj}}$ --  parametrizing biadjoint biequivalences in a $3$-category.

\begin{const}[{Gurski's biadjoint biequivalence}]
We denote by $\mathrm{bi}\cE^{\mathrm{adj}}$
the walking biequivalence, which is obtained as the following iterated pushout
\[\begin{tikzcd}[column sep=3.15cm]
\partial \cC_2\amalg\partial \cC_2\arrow[r,"{[[g\circ f, \id_x] , [(\id_y, f\circ g]]}"]\arrow[d, "\cong" swap]
\arrow[ddr, phantom, "\pushout", very near end, yshift=0.2cm, xshift=0.1cm]&\cP\arrow[dd]\\
\Sigma\partial\cC_1\amalg\Sigma\partial\cC_1 \arrow[d]&&\\
\Sigma\cE\amalg\Sigma\cE\arrow[r]\arrow[d]
\arrow[dr, phantom, "\pushout", very near end, yshift=0.2cm, xshift=0.1cm]&\mathrm{bi}\cE\arrow[d]\\
\Sigma\cE^{\mathrm{adj}}\amalg\Sigma\cE^{\mathrm{adj}}\arrow[r]&\mathrm{bi}\cE'\arrow[dd]& \partial \cC_3 \amalg \partial \cC_3\arrow[l, "{[[\varepsilon f\circ f\eta, \id_f], [g\varepsilon\circ\eta g, \id_g]]}" swap]
\arrow[ddl, phantom, "{\mbox{\reflectbox{$\pushout$}}}", very near end, yshift=0.2cm, xshift=0.1cm]\arrow[d, "\cong"]\\
&&\Sigma^2\partial\cC_1\amalg\Sigma^2\partial\cC_1 \arrow[d]\\
\partial \cC_4\amalg\partial \cC_4\arrow[r, "{[[\varepsilon (f  \Psi) , \varepsilon  (\Phi  g)], [(g  \Phi)  \eta, (\Psi  f)  \eta]]}"]\arrow[d]
\arrow[dr, phantom, "\pushout", very near end, yshift=0.2cm, xshift=0.1cm]&\mathrm{bi}\cE'' \arrow[d]&\Sigma^2\cI\amalg \Sigma^2\cI \arrow[l]\\
\cC_3\amalg \cC_3\arrow[r]&\mathrm{bi}\cE^{\mathrm{adj}}&\\
\end{tikzcd}\]
Here, $\partial\cC_4$ denotes the $3$-category consisting of two parallel $3$-morphisms and the last vertical map is the sum of two copies of the canonical \emph{folding} map $\partial\cC_4\to\cC_3$ that identifies the two $3$-morphisms. Instead, the first two left (non-isomorphism) vertical maps are suspensions of canonical maps that define $\cE$ and $\cE^{\mathrm{adj}}$ as pushouts in \cref{constE,defEadj}, and the right (non-isomorphism) vertical map
is an iterated suspension of the canonical map $\partial\cC_1\hookrightarrow\cI$.
\end{const}

The $3$-category $\mathrm{bi}\cE^{\mathrm{adj}}$ is designed to detect biadjoint biequivalences:

\begin{rmk}
A $1$-morphism $F\colon A\to B$ in a $3$-category $\cC$ is a biequivalence if and only if there is a solution to the lifting problem of $3$-categories
\[\begin{tikzcd}
{\cC_1}\arrow[r,"F"]\arrow[d,hook, "f"']&\cC\\
\mathrm{bi}\cE^{\mathrm{adj}}\arrow[ru,dashed]
\end{tikzcd}\]
\end{rmk}

Like before, although biequivalences and biadjoint biequivalences define the same notion, there is a sense in which the notion of biadjoint biequivalence is more \emph{coherent} than the basic biequivalence.
A precise formulation of this fact will be given \cref{nEincoh,biEconj}, where we will discuss that $\mathrm{bi}\cE^{\mathrm{adj}}$ is conjecturally a \emph{contractible} $3$-category while $\mathrm{bi}\cE$ is not.

\begin{digression}
There several variants of notions of biadjunctions in a $3$-categories, meaning a generalization of the idea of a (biadjoint) biequivalence for which the $2$- and $3$-cells occurring are not necessarily invertible, even in a weak sense. These include \cite[Definition 2.1]{GurskiBieq}, \cite[Definition 6.2]{AraujoCoh3Cat}, \cite{LackPseudomonads}, \cite[Example 1.1.7]{VerityEnriched}, \cite[\textsection 6]{KellyElementary}, \cite[\textsection 3]{PstragowskiDualizable}.
\end{digression}

\subsection{Equivalences of $n$-categories and inside an $(n+1)$-category} 

After having explored the meaning of the appropriate notion of sameness for $n$-categories for the low values $n=0,1,2,3$, we are now ready to discuss this for arbitrary $n$. Unsurprisingly, setting this up requires an induction that builds on previous notions and terminology, so we fix the following conventions.
We follow the convention that
 \begin{itemize}[leftmargin=*]
     \item The relation $\simeq_{-1}$ is equality, the relation $\simeq_0$ is isomorphism, the relation $\simeq_1$ is equivalence and the relation $\simeq_2$ is biequivalence.
     \item $(-1)\cat$ is the set $\{\varnothing,\{\term\}\}$, $0\cat$ is the category $\set$, $1\cat$ is the category $\cat$.
     \item $\cE_{-1}$ is $\cC_0$, $\cE_0$ is $\cI$, $\cE_1$ is $\cE$ and $\cE_2$ is $\mathrm{bi}\cE$.
     \end{itemize}
 
With these conventions, and assuming to know the notions of $(n-1)$-category and $(n-1)$-functor, the category $(n-1)\cat$, the notion of $(n-1)$-equivalence of $(n-1)$-categories $\simeq_{n-1}$ and the notion of $(n-1)$equivalence in an $n$-category $\simeq_(n-1)$, we will recall by induction on $n>0$ the notions of $n$-category and $n$-functor, the category $n\cat$, the notion of $n$-equivalence of $n$-categories $\simeq_n$ and the notion of $n$-equivalence in an $(n+1)$-category $\simeq_n$.

Given the inductive nature of the definitions, making sense of \emph{something} (e.g.~\cref{nequiofncat}) anywhere in this section for a certain $n$, assumes one being able to make sense of \emph{everything} (e.g.~\cref{defnequicat}) in this section for $n-1$.
We include for the reader's convenience a table summarizing how some of the key definitions build on each other for different values of $n$:\\
\begin{center}
\ \\
\begin{tabular}{|ccl|ccl|}
\hline
\multicolumn{3}{|c|}{Notion} & \multicolumn{3}{c|}{Defined in}\\
\hline
iso of sets &=& $0$-eq of $0$-cats & Defn~\ref{defbijsets} &=& Defn~\ref{nequiofncat} for $n=0$\\
iso in a cat &=& $0$-eq in a $1$-cat & Defn~\ref{defisocat} &=& Defn~\ref{defnequicat} for $n=0$\\
eq of cats &=& $1$-eq of $1$-cats &  Defn~\ref{defequiofcat} &=& Defn~\ref{nequiofncat} for $n=1$\\
eq in a $2$-cat &=& $1$-eq in a $2$-cat &  Defn~\ref{defequiin2cat} &=& Defn~\ref{defnequicat} for $n=1$\\
bieq of $2$-cats &=&$2$-eq of $2$-cats &  Defn~\ref{defbiequi} &=& Defn~\ref{nequiofncat} for $n=2$\\
bieq in a $3$-cat&=&$2$-eq in a $3$-cat &  Defn~\ref{DefGenBieq}&=& Defn~\ref{defnequicat} for $n=2$\\
&&$3$-eq of $3$-cats &&& Defn~\ref{nequiofncat} for $n=3$\\
&&$3$-eq in a $4$-cat &&& Defn~\ref{defnequicat} for $n=3$\\
&&$4$-eq of $4$-cats &&& Defn~\ref{nequiofncat} for $n=4$\\
&&$4$-eq in a $5$-cat &&& Defn~\ref{defnequicat} for $n=4$\\
\multicolumn{3}{|c|}{$\dots$}&&&$\dots$\\
&&$n$-eq of $n$-cats & &&Defn~\ref{nequiofncat} for $n$ arbitrary\\
&&$n$-eq in an $(n+1)$-cat &&& Defn~\ref{defnequicat} for $n$ arbitrary\\
&&$\dots$&&&$\dots$\\
\hline
\end{tabular}
\end{center}

\subsubsection{$n$-categories}
Following the convention that a $0$-category
is a set (and recovering the fact that a $1$-category is a category), recall from e.g.~\cite[\textsection IV.2]{EilenbergKelly} (cf.~also \cite{EhresmannDouble}) that 
an \emph{$n$-category} $\cC$ consists of
\begin{itemize}[leftmargin=*]
    \item a \emph{set of objects} $\Ob\cC$,
    \item for every $c,c'\in\Ob\cC$ a \emph{hom-$(n-1)$-category} $\cC(c,c')$,
    \item for every $c\in\Ob\cC$, an \emph{identity $(n-1)$-functor} 
\[\id_c\colon\{\term\}\to\cC(c,c)\]
\item and, for every $c,c',c''\in\Ob\cC$,
a \emph{composition $(n-1)$-functor}
\[\circ\colon\cC(c,c')\times\cC(c',c'')\to\cC(c,c''),\]
\end{itemize}
satisfying same axioms as \eqref{associativity} and \eqref{unitality} read in $(n-1)$-categories as opposed to sets.

In particular, the data of an \emph{$n$-category}
$\cC$ also determines a collection of sets $q\Mor\cC$ for ${0<q\leq n}$, where by convention $0\Mor\cC=\Ob\cC$ is the set of objects of $\cC$ and $q\Mor\cC$
is the set of \emph{$q$-morphisms} given by
\[q\Mor\cC\coloneqq\coprod_{c,c'\in\Ob\cC} ((q-1)\Mor)(\cC(c,c')).\]
Amongst other structural operators, one also gets \emph{composition operators} $\comp_p \colon q\Mor\cC \times_{p\Mor\cC} q\Mor\cC \to q\Mor\cC$
    defined for all $q > p \geq 0$ and all pairs of $q$-cells $(f,g)$ such that $s_p(g) = t_p(f)$.
Given $n$-categories $\cC$ and $\cD$, an $n$-\emph{functor} $F\colon\cC\to\cD$ consists of
\begin{itemize}[leftmargin=*]
\item a function on objects
\[F\colon\Ob\cC\to\Ob\cD\]
\item 
and an $(n-1)$-functor on hom-$(n-1)$-categories for every $c,c'\in\Ob\cC$
\[F_{c,c'}\colon\cC(c,c')\to\cD(Fc,Fc'),\]
\end{itemize}
satisfying the \emph{functorial properties} from \eqref{functorial1} and \eqref{functorial2} read in $(n-1)$-categories as opposed to sets.

\subsubsection{$n$-equivalences of $n$-categories}

\label{equivalences of ncategories}

We now define the relation $\simeq_n$ of $n$-equivalence between $n$-categories inductively for $n\ge0$.
This approach is consistent with the one from \cite[\textsection 1]{StreetOrientedSimplexes} and is also the usual notion of equivalence in enriched contexts from e.g.~\cite[\textsection 1.11]{Kelly} or \cite[Def.\ 1.3.11]{JohnsonYauBook}.

\begin{defn}[$n$-equivalence of $n$-categories]
\label{nequiofncat}
An $n$-functor $F\colon \cA\to \cB$ between $n$-categories is an \emph{$n$-equivalence},  and we write $F\colon \cA\simeq_n \cB$, if
\begin{enumerate}[leftmargin=*]
    \item the $n$-functor $F$ is \emph{surjective on objects up to equivalence}, meaning that for every object $b\in\Ob\cB$ there exists an object $a\in\Ob\cA$ and an $(n-1)$-equivalence\footnote{Note that $(n-1)$-equivalence inside $\cB$ is defined either in previous sections (for $n\leq2$) or in \cref{defnequicat} (for $n\geq 2$).}
    \[b\simeq_{n-1} Fa\text{ in the $n$-category } \cB.\]
    \item the $n$-functor $F$ is a \emph{hom-wise $(n-1)$-equivalence}, meaning that for all objects $a,a'\in\Ob \cA$ the $n$-functor $F$ induces $(n-1)$-equivalences
    \[F_{a,a'}\colon \cA(a,a')\simeq_{n-1}\cB(Fa,Fa')\text{ of $(n-1)$-categories}.\]
\end{enumerate}
Two $n$-categories are \emph{$n$-equivalent}, and we write $\cA\simeq_n \cB$, if there is an $n$-equivalence between them.
\end{defn}

We now explore how the notion of $n$-equivalence guarantees the existence of a kind of inverse. Like before, one should not expect that an equivalence of $n$-categories should have an inverse (cf.~\cref{ExampleLack}), but one direction always holds.
The following is alluded to in \cite[\textsection 1]{GPS}, referring to \cite{StreetOrientedSimplexes}.

\begin{rmk}
An $n$-functor $F\colon \cA\to \cB$ between $n$-categories is an $n$-equivalence
if there exists an $n$-functor $G\colon \cB\to \cA$ and $(n-1)$-equivalences 
    \[\eta\colon\id_\cA\simeq_{n-1} G\circ F\mbox{ in } n\cat(\cA,\cA)\mbox{ and }\varepsilon\colon F\circ G\simeq_{n-1}\id_\cB\mbox{ in } n\cat(\cB,\cB).\]
\end{rmk}

The following notion of an $n$-equivalence in an $(n+1)$-category is treated in \cite[\textsection 1]{StreetOrientedSimplexes}.

\begin{defn}[$n$-equivalence in an $(n+1)$-category $\cC$]
\label{defnequicat}
An $1$-morphism $F\colon A\to B$ in an $(n+1)$-category $\cC$ is an \emph{$n$-equivalence} if
and only if there exist a $1$-morphism $G\colon B\to A$ and $(n-1)$-equivalences
    \[\eta\colon\id_A\simeq_{n-1} G\circ F\text{ in } \cC(A,A)\text{ and }\varepsilon\colon F\circ G\simeq_{n-1}\id_B\text{ in } \cC(B,B).\]
    Two objects of $\cC$ are \emph{$n$-equivalent}, and we write $A\simeq_n B$, if there is an $n$-equivalence between them.
\end{defn}

\subsubsection{The fundamental $k$-category of an $n$-category}

Testing whether two objects $A$ and $B$ in an $(n+1)$-category $\cC$ are $n$-equivalent can be tested as an $(n-1)$-equivalence in a suitable $n$-category $\Pi_n\cC$ instead, which we now describe.

\begin{defn}
Let $n>0$. Given an $(n+1)$-category $\cC$ the \emph{fundamental $n$-category} of $\cC$ is the $n$-category \[\Pi_{n}\cC=((\Pi_{n-1})_*)\cC\]
obtained by base change along the product-preserving functor $\Pi_{n-1}\colon n\cat\to(n-1)\cat$. This defines a product-preserving functor $\Pi_{n}\colon (n+1)\cat\to n\cat$. More explicitly,
its set of objects is
\[\Ob(\Pi_{n}\cC)=\Ob\cC\]
and its hom-$(n-1)$-categories are
\begin{equation}
\label{HomofPi}
    (\Pi_{n}\cC)(c,c')=\Pi_{n-1}(\cC(c,c')).
\end{equation}
For $0\leq k< n$, the \emph{fundamental $k$-category} of $\cC$ is by induction defined to be the $(k+1)$-category
\[\Pi_{k+1}\cC=(\Pi_{k})_*\cC\cong\Pi_{k+1}(\Pi_{k+2}(\dots((\Pi_{n}\cC)\dots))).\]
\end{defn}

The following is a consequence of the definitions.

\begin{prop}
\label{EquiTestTrunc}
Let $n\geq0$. Given an $(n+1)$-category $\cC$ and $A,B\in\Ob\cC$,
the following are equivalent.
\begin{enumerate}[leftmargin=*]
    \item There is an $n$-equivalence
    \[A\simeq_{n} B\text{ in the $(n+1)$-category }\cC.\]
    \item For some -- hence for all -- $0\leq k<n$ there is a $k$-equivalence
    \[A\simeq_{k} B\text{ in the fundamental $(k+1)$-category }\Pi_{k+1}\cC.\]
    \end{enumerate}
\end{prop}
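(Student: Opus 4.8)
The plan is to prove the equivalence by an induction on $n$, reducing the statement for $\cC$ to the statement for its fundamental $n$-category $\Pi_n\cC$, and tracking how the defining data of an $n$-equivalence in an $(n+1)$-category behaves under the base-change functor $\Pi_{n-1}\colon n\cat\to(n-1)\cat$. The base case $n=0$ is the content of \cref{FundCatEqui1cat}: a $0$-equivalence (isomorphism) $A\cong B$ in the $1$-category $\cC$ is the same as the equality $A=B$ in $\Pi_0\cC=\Pi_1\cC$ (here $k=0$ is the only allowed value), which is precisely a $0$-equivalence there since $\simeq_{-1}$ is equality. For $n=1$ the two nontrivial cases $k=0,1$ recover \cref{FundCatEqui2cat}, and for $n=2$ one recovers \cref{biequicof} paired with the base-change description of $\Pi_2$.

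First I would establish the single reduction step: for an $(n+1)$-category $\cC$, there is an $n$-equivalence $A\simeq_n B$ in $\cC$ if and only if there is an $(n-1)$-equivalence $A\simeq_{n-1}B$ in the fundamental $n$-category $\Pi_n\cC$. For the forward direction, an $n$-equivalence in $\cC$ is witnessed by $1$-morphisms $F\colon A\to B$, $G\colon B\to A$ together with $(n-1)$-equivalences $\eta\colon\id_A\simeq_{n-1}G\circ F$ in $\cC(A,A)$ and $\varepsilon\colon F\circ G\simeq_{n-1}\id_B$ in $\cC(B,B)$. Applying the product-preserving functor $\Pi_{n-1}$ to the hom-$n$-categories $\cC(A,A)$ and $\cC(B,B)$, and using that $(\Pi_n\cC)(A,A)=\Pi_{n-1}(\cC(A,A))$ and similarly for $B$, the images of $F$, $G$ remain $1$-morphisms of $\Pi_n\cC$ and the images of $\eta$, $\varepsilon$ become $(n-2)$-equivalences between the appropriate composites in $(\Pi_n\cC)(A,A)$ and $(\Pi_n\cC)(B,B)$ — this uses that $\Pi_{n-1}$ sends $(n-1)$-equivalences of $(n-1)$-categories to $(n-2)$-equivalences of $(n-2)$-categories, which itself follows by the inductive hypothesis applied one level down (i.e.\ \cref{EquiTestTrunc} for $n-1$, or rather the functoriality statement that $\Pi_{n-1}$ preserves the equivalence relation, which is the $k=n-2$ instance). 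Hence $F$ exhibits an $(n-1)$-equivalence $A\simeq_{n-1}B$ in $\Pi_n\cC$. For the converse, one unwinds the definition of $(n-1)$-equivalence in $\Pi_n\cC$ and observes that any $1$-morphism and any $(n-2)$-equivalence in $(\Pi_n\cC)(A,A)=\Pi_{n-1}(\cC(A,A))$ lifts: a $1$-morphism is literally a $1$-morphism of $\cC$, and an $(n-2)$-equivalence in $\Pi_{n-1}(\cC(A,A))$ lifts to an $(n-1)$-equivalence in $\cC(A,A)$ by the inductive hypothesis. This gives exactly the data witnessing an $n$-equivalence $A\simeq_n B$ in $\cC$.

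Granting the reduction step, the proposition follows: applying it repeatedly,
\[
A\simeq_n B\text{ in }\cC
\iff A\simeq_{n-1}B\text{ in }\Pi_n\cC
\iff A\simeq_{n-2}B\text{ in }\Pi_{n-1}\Pi_n\cC
\iff\cdots\iff A\simeq_k B\text{ in }\Pi_{k+1}\cC,
\]
for every $0\le k<n$, where at each stage we use $\Pi_{k+1}(\Pi_{k+2}(\cdots(\Pi_n\cC)\cdots))=\Pi_{k+1}\cC$ from the definition of the fundamental $k$-category. In particular the condition in (2) holding for some $k$ is equivalent to its holding for all $k$, and equivalent to (1).

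The main obstacle I anticipate is the bookkeeping in the forward direction of the reduction step: one must verify carefully that the base-change functor $\Pi_{n-1}$ genuinely carries the witnessing $(n-1)$-equivalences $\eta,\varepsilon$ to witnessing $(n-2)$-equivalences, i.e.\ that "$\Pi_{n-1}$ preserves the relation $\simeq_{n-1}$ on objects inside an $n$-category." This is not quite formal — it is exactly the inductive hypothesis used one dimension down — so the cleanest organization is to prove, simultaneously by induction on $n$, both the statement of \cref{EquiTestTrunc} and the auxiliary claim that $\Pi_n\colon(n+1)\cat\to n\cat$ sends $n$-equivalent objects of an $(n+1)$-category to $(n-1)$-equivalent objects of the fundamental $n$-category; the two feed into each other. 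The converse direction is comparatively routine, amounting to the observation that the data defining an equivalence in $\Pi_n\cC$ consists of $1$-morphisms and $(n-2)$-cells that are visibly present already one level up in $\cC$, with the lifting of "equivalence-ness" again supplied by the inductive hypothesis.
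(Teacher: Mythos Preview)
Your approach is essentially identical to the paper's: both reduce to the single step $A\simeq_n B$ in $\cC$ iff $A\simeq_{n-1}B$ in $\Pi_n\cC$, proved by unfolding the definition and applying the inductive hypothesis to the hom-$n$-categories via the formula $(\Pi_n\cC)(A,A)=\Pi_{n-1}(\cC(A,A))$. Two minor corrections: the base case is $n=1$ (for $n=0$ the range $0\le k<0$ is empty, and for $n=1$ only $k=0$ is allowed, not $k=0,1$), and the ``auxiliary claim'' you worry about is not an extra ingredient but is precisely the inductive hypothesis applied to the $n$-category $\cC(A,A)$, so no simultaneous induction is needed.
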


\begin{proof}
We prove the statement by induction on $n=0$ further assuming $k=n-1$ for simplicity of exposition. The other values of $k$ can be treated similarly with a further induction on $k$.

The base of the induction, the case $n=1$, is \cref{FundCatEqui1cat}, and we now assume $n>1$. Let $A,B\in\Ob\cC$.
Having an $n$-equivalence
\[A\simeq_{n}B\text{ in the $(n+1)$-category }\cC\]
is equivalent to the existence of $F\in\Ob\cC(A,B)$ and $G\in\Ob\cC(B,A)$ so that there are $(n-1)$-equivalences
\[G\circ F\simeq_{n-1}\id_A\text{ in the $n$-category }\cC(A,A)\]
\[\text{and }F\circ G\simeq_{n-1}\id_{B}\text{ in the $n$-category }\cC(B,B).\]
This is in turn equivalent -- by induction hypothesis -- to the existence of $(n-2)$-equivalences
\[G\circ F\simeq_{n-2}\id_A\text{ in the $(n-1)$-category }\Pi_{n-1}(\cC(A,A))=(\Pi_{n}\cC)(A,A)\]
\[\text{and }F\circ G\simeq_{n-2}\id_{B}\text{ in the $(n-1)$-category }\Pi_{n-1}(\cC(B,B))=(\Pi_{n}\cC)(B,B).\]
Finally, this is equivalent to the existence of an $(n-1)$-equivalence
\[A\simeq_{n-1}B\text{ in the $n$-category }\Pi_{n}\cB.\]
This concludes the proof.
\end{proof}

This fact allows one to see that \cref{nequiofncat} is consistent with the approach to equivalences of weak $n$-categories from \cite{Tamsamani}, \cite[\textsection 20]{SimpsonBook}, \cite[\textsection\textsection 6.1-6.2]{PaoliBook}.

\subsubsection{The walking $n$-equivalence}

We seek an indexing shape -- which we will denote by $\eq{n}$ -- parametrizing $n$-equivalences in an $n$-category.  We follow the convention that $-1\cE=\cC_0$, we have $\cI$, $\cE$ and $\mathrm{bi}\cE$ are precisely $0\cE$, $1\cE$ and $2\cE$, and, assuming to know how $(n-1)\cE$ is defined we define $n\cE$ for general $n$.

Given an $n$-category $\cD$, we denote by $\Sigma\cD$ the \emph{suspension $(n+1)$-category}, which consists of two objects and one non-trivial hom-$n$-category given by $\cD$ (see e.g.~\cite[\textsection B.6.5]{AraMaltsiniotisJoin}) for more details. This defines a functor $\Sigma \colon n\cat\to(n+1)\cat$.

\begin{const}[The walking $n$-equivalence]
Let $n>0$. We denote by $n\cE$
the \emph{walking $n$-equivalence}, which is obtained from $\cP$ as the pushout of $(n+1)$-categories
\[
\begin{tikzcd}[column sep=3.15cm]
\partial\cC_2\amalg\partial\cC_2\arrow[r,"{[[\id_{a},g\circ f], [f\circ g, \id_{b}]]}"]
\arrow[ddr, phantom, "\pushout", very near end, yshift=0.2cm, xshift=0.1cm]\arrow[d, "\cong" swap]&\cP\arrow[dd]\\
\Sigma\partial\cC_1\amalg\Sigma\partial\cC_1\arrow[d]&\\
\Sigma\eq{(n-1)}\amalg\Sigma\eq{(n-1)}\arrow[r]&\eq{n}
\end{tikzcd}
\]
Here, the left (non-isomorphism) vertical map is the coproduct of two suspension of canonical maps that define $(n-1)\cE$ by induction hypothesis.
\end{const}

The $n$-category $\eq{n}$ is designed to detect $n$-equivalences:

\begin{rmk}
A $1$-morphism $F\colon A\to B$ in an $(n+1)$-category $\cC$ is an $n$-equivalence if and only if there is a solution to the lifting problem of $(n+1)$-categories
\[\begin{tikzcd}
{\cC_1}\arrow[r,"F"]\arrow[d,  "f" swap,hook ]&\cC\\
\eq{n}\arrow[ru,dashed]
\end{tikzcd}\]
\end{rmk}

The following could be proven with similar techniques to those from \cref{Enoncoh}.

\begin{prop}
\label{nEincoh}
Let $n>0$. The $(n+1)$-category $n\cE$ is not contractible, namely
\[n\cE\not\simeq_n\cC_0.\]
\end{prop}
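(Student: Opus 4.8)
The plan is to generalize the polygraphic homology argument used in the second proof of \cref{Enoncoh}. Recall that $n\cE$ is built from $\cP$ by suspending two copies of $(n-1)\cE$ and gluing them along the two nontrivial hom-objects, so $n\cE$ is a polygraph (free $(n+1)$-category on a computad) whose generators are: two $0$-cells $a,b$; two $1$-cells $f,g$; and, in each of the two suspended copies, the generators of $(n-1)\cE$ sitting one dimension up. Inductively, $(n-1)\cE$ is itself a polygraph, and unwinding the suspensions shows that $n\cE$ has, in dimension $2$, exactly the generators $\eta,\varepsilon$ (coming from the two copies of $\cP$ inside the two copies of $(n-1)\cE$), with no generators in dimensions $3,\dots,n$, and finally two generators in dimension $n+1$ coming from the relations imposed at the top of $\eq{(n-1)}$. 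The key point, exactly as in the $n=1$ case, is that the cellular (polygraphic) chain complex $\lambda(n\cE)$ has $\mathbb{Z}[\eta,\varepsilon]$ in degree $2$, the differential $\partial_1\colon\mathbb{Z}[\eta,\varepsilon]\to\mathbb{Z}[f,g]$ sends $\eta\mapsto f+g$ and $\varepsilon\mapsto -f-g$ (or $\pm(f+g)$ depending on orientation conventions), and the next differential $\partial_2$ out of degree $3$ is zero on the kernel because there are simply no $3$-generators. Hence $H_2(\lambda(n\cE))\cong\mathbb{Z}[\eta+\varepsilon]\cong\mathbb{Z}\neq 0$, and by the invariance of polygraphic homology under $n$-equivalence (\cite[Proposition 4.3.3]{GuettaThesis}, in the $(n+1)$-categorical version), $n\cE\not\simeq_n\cC_0$ since $H_2(\lambda\cC_0)=0$.

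First I would set up the inductive description of $n\cE$ as a polygraph, carefully tracking which generators land in which dimension; the suspension functor $\Sigma$ raises dimension by one and sends the generating computad of a $k$-category to a generating computad of a $(k+1)$-category with the same generators shifted up, plus two $0$-cells, so the only generators of $n\cE$ in dimensions $0,1,2$ are $a,b$ (dim $0$), $f,g$ (dim $1$), $\eta,\varepsilon$ (dim $2$), with the crucial fact that $(n-1)\cE=\Sigma\eq{(n-2)}\amalg_{\partial}\Sigma\eq{(n-2)}$ contributes its own $\eta,\varepsilon$ as the $1$-cells of the suspended $\cP$'s, i.e.\ as $2$-cells of $n\cE$. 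Second, I would compute the relevant part of $\lambda(n\cE)$: the sources and targets of $\eta$ and $\varepsilon$ are $\id_a$ versus $g\circ f$ and $f\circ g$ versus $\id_b$ respectively, so the cellular boundary is $\partial(\eta)=f+g$ and $\partial(\varepsilon)=-(f+g)$ up to sign (this is the identical computation to the $n=1$ case, because it only involves the $1$- and $2$-skeleton, which is unchanged). Third, I would argue $H_2=\ker(\partial_1)/\im(\partial_2)$, and since $n\cE$ has no generators in dimension $3$ (for $n\geq 2$; for $n=1$ this is \cref{Enoncoh} itself), $\im(\partial_2)=0$, so $H_2\cong\ker(\partial_1)=\mathbb{Z}\langle\eta+\varepsilon\rangle\cong\mathbb{Z}$. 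Finally, I would invoke the homotopy-invariance of polygraphic homology — that an $n$-equivalence of $(n+1)$-categories (or more precisely a weak equivalence in the folk/canonical model structure on $(n+1)\cat$) induces an isomorphism on polygraphic homology — to conclude.

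The main obstacle is establishing the invariance statement in the generality of $(n+1)$-categories: \cite{GuettaThesis} works it out for $2$-categories, and one must either cite an appropriate generalization or appeal to the general principle that polygraphic homology computes the homology of the associated $\infty$-groupoid / nerve and is therefore a homotopy invariant of the underlying $(\infty,0)$- or rather $(\infty,\infty)$-homotopy type, which is invariant under $n$-equivalence. An alternative, avoiding this, would be to run the model-categorical argument instead: exhibit a lifting problem against a generating trivial cofibration of the canonical model structure on $(n+1)\cat$ (for instance, against a folding map $\partial\cC_{n+2}\to\cC_{n+1}$ detecting the missing top-dimensional coherence, in analogy with the $\partial\cC_3\to\cC_2$ used in the model-categorical proof of \cref{Enoncoh}) that $n\cE\to\cC_0$ fails to solve; since this map is a fibration, failing to be an acyclic fibration forces it not to be a weak equivalence. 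In either route, the genuinely new content beyond the $n=1$ case is merely bookkeeping about the polygraph structure of $\eq{n}$, since the obstruction lives entirely in the low-dimensional skeleton which the suspensions leave untouched. I would likely present the polygraphic-homology proof as the main one, since it is the cleanest, and remark that the model-categorical argument also adapts.

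\begin{proof}[Sketch]
We adapt the polygraphic homology proof of \cref{Enoncoh}. By induction and the defining pushouts, $\eq{n}$ is a polygraph (free $(n+1)$-category on a computad) whose generators in low dimensions are the $0$-cells $a,b$, the $1$-cells $f,g$, and the $2$-cells $\eta,\varepsilon$ arising as the $1$-cells of the two copies of $\cP$ sitting inside the two copies of $\Sigma\eq{(n-1)}$; there are no generators in dimensions $3,\dots,n$, and two generators in dimension $n+1$. Hence the polygraphic chain complex $\lambda(\eq{n})$ agrees with $\lambda\cE$ in degrees $\leq 2$:
\[\lambda(\eq{n})=[\ \mathbb Z[a,b]\xleftarrow{\partial_0}\mathbb Z[f,g]\xleftarrow{\partial_1}\mathbb Z[\eta,\varepsilon]\xleftarrow{\partial_2}\cdots\ ],\]
with $\partial_1(\eta)=f+g$ and $\partial_1(\varepsilon)=-f-g$, exactly as before. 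Since $\eq{n}$ has no $3$-dimensional generators (for $n=1$ this is the content of \cref{Enoncoh}; for $n\geq 2$ it is immediate from the construction), we get $\im\partial_2=0$, so the second polygraphic homology is
\[H_2(\lambda(\eq{n}))\cong\ker[\ \partial_1\colon\mathbb Z[\eta,\varepsilon]\to\mathbb Z[f,g]\ ]\cong\mathbb Z[\eta+\varepsilon]\cong\mathbb Z\not\cong 0.\]
Since $H_2(\lambda\cC_0)=0$ and polygraphic homology is invariant under $n$-equivalence of $(n+1)$-categories (the $(n+1)$-categorical analogue of \cite[Proposition 4.3.3]{GuettaThesis}), we conclude $\eq{n}\not\simeq_n\cC_0$, as desired.
\end{proof}
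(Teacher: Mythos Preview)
The paper gives no proof beyond pointing to the techniques of \cref{Enoncoh}, so your task is genuinely to fill in the details. Your choice of the polygraphic homology route is sound and does lead to a proof, but your description of the chain complex $\lambda(\eq{n})$ is incorrect for $n\geq 2$, and the error is not merely cosmetic.

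The problem is your count of generators. For $n\geq 2$, the two glued copies are $\Sigma\eq{(n-1)}$ with $n-1\geq 1$, and $\eq{(n-1)}$ has the same $1$-skeleton as $\cP$, hence \emph{two} generating $1$-cells $f',g'$ (not one). Suspending, each copy contributes two degree-$2$ classes to $\lambda(\eq{n})$, giving four generators $\eta,\bar\eta,\varepsilon,\bar\varepsilon$ in degree $2$, not two. The collapse to $\mathbb Z[\eta,\varepsilon]$ that occurs for $n=1$ happens only because there the copies are $\Sigma\cI$, and in $\cI$ one has $g'=f'^{-1}$ strictly, forcing $[\bar\eta]=-[\eta]$ in $\lambda$; for $n\geq 2$ the cells $\eta$ and $\bar\eta$ are only equivalent via \emph{higher} cells, so no such relation holds in degree $2$. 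For the same reason there \emph{are} generators in degree $3$ (coming from $(\lambda\eq{(n-1)})_2$, suspended), so $\im\partial_2\neq 0$. Your assertion that $\eq{n}$ has ``no generators in dimensions $3,\dots,n$'' is false already for $n=2,3$. (Relatedly, $\eq{n}$ is not a polygraph for any $n\geq 1$: at the top it contains suspended copies of $\cI$, which is not free. This does not obstruct computing $\lambda$, but it does mean your framing in terms of computad generators needs care.)

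The approach is salvageable with the correct bookkeeping: for $n\geq 2$ one finds $(\lambda\eq{n})_2\cong\mathbb Z^4$ with $\ker\partial_1$ of rank $3$, while $\im\partial_2$ is spanned by $\eta+\bar\eta$ and $\varepsilon+\bar\varepsilon$ (one from each suspended copy), so $H_2\cong\mathbb Z^3/\mathbb Z^2\cong\mathbb Z\neq 0$ and the conclusion follows. As for your alternative model-categorical route: ``generating trivial cofibration'' should read ``generating cofibration'', and for $n\geq 2$ there is no need to go to the top dimension --- the distinct parallel $2$-cells $\bar\eta\circ_1\eta$ and $\id_{\id_a}$ already witness the failure of lifting $\partial\cC_3\hookrightarrow\cC_2$ against $\eq{n}\to\cC_0$.
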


We know for abstract reasons that for all $n$ there exists a contractible $(n+1)$ category $\widetilde{n\cE}$ with two objects. This can be obtained, for instance, by factoring the unique map $\cC_0\amalg\cC_0\to\cC_0$ as a cofibration followed by a weak equivalence
\[\cC_0\amalg\cC_0\hookrightarrow \widetilde{n\cE}\stackrel{\simeq}{\longrightarrow}\cC_0\]
in the canonical model structure on $n\cat$ from \cite[Theorem 6.1]{LMW}.

However, we are not aware of a way to construct the $(n+1)$-category $\widetilde{n\cE}$ explicitly beyond $n=1$ for which we have discussed $\cE^{\mathrm{adj}}$. Even for $n=2$, one can consider the candidate $\mathrm{bi}\cE^{\mathrm{adj}}$, which is likely to be contractible, but we are not aware of a proof.

\begin{conjecture}
\label{biEconj}
The $3$-category $\mathrm{bi}\cE^{\mathrm{adj}}$ is contractible, namely there is an equivalence of $3$-categories
\[\mathrm{bi}\cE^{\mathrm{adj}}\simeq_3\cC_0.\]
\end{conjecture}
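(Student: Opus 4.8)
The plan is to follow the same template as the proof of \cref{Eadjcoh}, reducing contractibility to a statement about hom-$2$-categories. Consider the unique $3$-functor $\varphi\colon\mathrm{bi}\cE^{\mathrm{adj}}\to\cC_0$. Since $\mathrm{bi}\cE^{\mathrm{adj}}$ has exactly two objects $a$ and $b$, we trivially have that $\varphi$ is surjective on objects up to $2$-equivalence (any object is carried to $\term$, which equals $\varphi(a)=\varphi(b)$). The real content is to show that each of the hom-$2$-categories $\mathrm{bi}\cE^{\mathrm{adj}}(x,y)$, for $x,y\in\{a,b\}$, is biequivalent to the terminal $2$-category $\cC_0$. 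By \cref{defbiequi} (hom-wise equivalence) and the inductive structure of the construction, one expects all four hom-$2$-categories to be isomorphic, so it suffices to analyze one of them, say $\mathrm{bi}\cE^{\mathrm{adj}}(a,a)$.

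First I would compute this hom-$2$-category explicitly from the iterated pushout defining $\mathrm{bi}\cE^{\mathrm{adj}}$. Tracking the suspension functors $\Sigma$ and $\Sigma^2$ through the diagram, the hom-$2$-category should be built from the generating data living in dimension $\geq 1$ relative to the objects: the $1$-cells freely generated by $f$ and $g$ (producing countably many objects of the hom-$2$-category, as in $\chaos\mathbb{N}$ for $\cE^{\mathrm{adj}}$), the $2$-cells coming from $\eta$, $\varepsilon$, and their inverses, and the $3$-cells $\Phi$, $\Psi$, their inverses, together with the folding relation from the final $\partial\cC_4\to\cC_3$ pushout that forces the swallowtail coherence $3$-cells to be identities. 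The expectation is that, after imposing all these relations, the hom-$2$-category becomes equivalent to a ``chaotic'' $2$-category: one whose underlying category is $\chaos\mathbb{N}$ and whose hom-categories between any two objects are themselves contractible (equivalent to $\cC_0$). Such a $2$-category is biequivalent to $\cC_0$ by the same argument as in \cref{Eadjcoh}, applied one dimension up — surjectivity on objects is automatic, and hom-wise one reduces to checking that categories of the form $\chaos\mathbb{N}$ are equivalent to $\cC_0$, which is \cref{Icontractible}-style reasoning.

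The hard part will be verifying that the swallowtail relations, together with the freely-added invertible $3$-cells $\Phi$ and $\Psi$, genuinely kill all the higher homotopy so that the hom-$2$-category is as simple as claimed — in other words, confirming that Gurski's coherence data for biadjoint biequivalences is ``just enough'' to make $\mathrm{bi}\cE^{\mathrm{adj}}$ contractible and not merely to make $F$ a biequivalence. Concretely, one must show that the $2$-cells $\eta,\varepsilon$ (and their chosen inverses) do not generate a nontrivial class in something like $\pi_2$ of the hom-$2$-category, which in the $\cE$ versus $\cE^{\mathrm{adj}}$ case was exactly the obstruction computed via polygraphic homology in the second proof of \cref{Enoncoh}. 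An alternative, possibly cleaner, route would be a model-categorical argument: exhibit $\mathrm{bi}\cE^{\mathrm{adj}}$ as a cylinder object for $\cC_0$ in the canonical model structure on $3\cat$, i.e.\ show the folding map $\cC_0\amalg\cC_0\to\cC_0$ factors as the evident cofibration $\cC_0\amalg\cC_0\hookrightarrow\mathrm{bi}\cE^{\mathrm{adj}}$ followed by a weak equivalence; since $\mathrm{bi}\cE^{\mathrm{adj}}$ is manifestly cofibrant (built by cell attachments from $\cC_0\amalg\cC_0$), and one can check $\mathrm{bi}\cE^{\mathrm{adj}}\to\cC_0$ is a fibration, it would remain only to verify the right-lifting property against the generating cofibrations $\partial\cC_k\to\cC_k$ for $k\leq 4$ — and this is precisely where the swallowtail relations must be shown to do their job. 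Given that the authors themselves flag this as a conjecture rather than proving it, I expect whichever route is taken to run into a genuine combinatorial obstruction at the level of $3$-cells that resists a short argument; the honest plan is to set up the reduction to the hom-$2$-category computation and then grind through the polygraphic or lifting-theoretic check, anticipating that this is where the difficulty concentrates.
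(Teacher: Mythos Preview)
The paper offers no proof of this statement: it is explicitly labeled a \emph{conjecture}, and the surrounding text says outright that the authors ``are not aware of a proof'' even for this case $n=2$. So there is nothing on the paper's side to compare your proposal against.

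Your proposal is not a proof either, and you are candid about this. What you have written is a sensible reduction strategy---essential surjectivity is trivial, so everything comes down to showing the four hom-$2$-categories are biequivalent to $\cC_0$---together with two possible lines of attack (direct computation of the hom-$2$-categories, or a lifting argument in the canonical model structure on $3\cat$). Both are reasonable starting points, and you correctly locate the obstruction: one must verify that the swallowtail relations, on top of the invertible $\Phi$ and $\Psi$, suffice to kill whatever higher homotopy is generated by $\eta$, $\varepsilon$ and their inverses. That is precisely the content of the conjecture, and neither you nor the paper resolves it. One caution: your expectation that the four hom-$2$-categories are \emph{isomorphic} (not merely biequivalent) is an analogy with the $\cE^{\mathrm{adj}}$ case and would itself need justification; it is not obviously forced by the inductive structure of the pushout.
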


\subsection{Equivalences of $\omega$-categories and inside an $\omega$-category}

\subsubsection{$\omega$-categories}

While the notion of an $n$-category foresees the presence of morphisms up to dimension $n$, allowing cells in all dimensions $k\ge0$ leads to the notion of an $\omega$-category. We refer the reader to e.g.~\cite[\textsection1]{StreetOrientedSimplexes}
 for a traditional approach to the definition of an $\omega$-category, but we briefly recall the main features here.

The data of an \emph{$\omega$-category}
$\cC$ consists of a collection of sets $q\Mor\cC$, ${q \geq 0}$,
where $0\Mor\cC$ is called the set of \emph{objects} of $\cC$ and $q\Mor\cC$, $q>0$,
is the set of \emph{$q$-cells} or \emph{$q$-arrows} or cells of \emph{dimension} $q$ of $\cC$, together with:
\begin{itemize}[leftmargin=*]
    \item \emph{source} and \emph{target} operators $s_p, t_p \colon q\Mor\cC \to p\Mor\cC$
    for all $q > p \geq 0$;
    \item \emph{identity} operators $\id_q \colon p\Mor\cC \to q\Mor\cC$ for all $q> p\ge0$; 
    \item \emph{composition} operators $\comp_p \colon q\Mor\cC \times_{p\Mor\cC} q\Mor\cC \to q\Mor\cC$
    defined for all $q > p \geq 0$ and all pairs of $q$-cells $(f,g)$ such that $s_p(g) = t_p(f)$.
\end{itemize}
Notice that this is equivalent to endowing every pair $(p\Mor\cC, q\Mor\cC)$, $q > p \geq 0$,
with the structure (but not the axioms, yet) of a category.
For all $r > q > p \geq 0$, we ask that the triple $(p\Mor\cC, q\Mor\cC, r\Mor\cC)$
together with all the relevant source, target, identity and composition operators
is a $2$-category.
In particular, for every $c,c'\in\Ob\cC$ there is a \emph{hom-$\omega$-category} $\cC(c,c')$.

An \emph{$\omega$-functor} $F \colon \cC \to \cC'$ between $\omega$-categories $\cC$ and $\cC'$ is a collection of maps $F_q \colon q\Mor\cC \to q\Mor\cC'$ for
$q \geq 0$ that preserves source, target, identity, and composition.
We denote by $\omegacat$ the (large) category of (small) $\omega$-categories and $\omega$-functors.

A cell in an $\omega$-category $\cC$ which is the identity of a lower dimensional cell is said to be \emph{trivial}. An $\omega$-category in which all $q$-cells are trivial for $q>n$ is precisely an $n$-category, and an $\omega$-functor between $n$-categories reduces precisely to an $n$-functor.

\subsubsection{$\omega$-equivalences of $\omega$-categories}

The following is from \cite[\textsection 1.2]{AraLucas} (see also \cite[D\'efinition~1.1.7]{LoubatonNerfs}).

\begin{defn}[Structure of reversibility]
Let $\cC$ be an $\omega$-category, and $S$ a set of morphisms of $\cC$. We say that $S$ is a \emph{structure of reversibility} if
for every $k\ge1$ and for every $k$-cell $\varphi\colon\alpha\to\beta$ in $S$ there exist in $S$ a $k$-cell $\gamma\colon\beta\to\alpha$ and $(k+1)$-cells \[\Phi\colon\id_{\alpha}\to \gamma \comp_{k-1} \varphi\quad\text{ and }\quad\Psi\colon \varphi\comp_{k-1}\gamma\to\id_{\beta}.\]
\end{defn}

The following terminology is consistent with \cite[Definition 4.2]{LMW} (and with \cite{PolyBook}).

\begin{defn}[$\omega$-equivalence in an $\omega$-category $\cC$]
A $1$-morphism $F\colon A\to B$ in an $\omega$-category $\cC$ is an \emph{$\omega$-equivalence}, and we write $F\colon A\simeq_\omega B$, if there exists a structure of reversibility in the $\omega$-category $\cC$ containing $F$.
Two objects $A$ and $B$ in $\cC$ are \emph{$\omega$-equivalent}, and we write $A\simeq_\omega B$, if there is an $\omega$-equivalence between them.
\end{defn}

It is easily verified that the relation $\simeq_\omega$ of being $\omega$-equivalent
for objects in an $\omega$-category $\cC$ is an equivalence relation.

The following recovers the notion of weak equivalence from \cite[Definition~4.7]{LMW} (and $\omega$-equivalence from \cite{PolyBook}).

\begin{defn}[$\omega$-equivalence of $\omega$-categories]
\label{equiofomegacat}
An $\omega$-functor $F\colon \cA\to \cB$ between $\omega$-categories is an \emph{$\omega$-equivalence}, and we write $F\colon \cA\simeq_\omega \cB$,
if and only if
\begin{enumerate}[leftmargin=*, ref=(\arabic*)]
    \item the $\omega$-functor $F$ is \emph{surjective on objects up to $\omega$-equivalence}, meaning that for every object $b\in\Ob\cB$ there exists an object $a\in\Ob\cA$ and an $\omega$-equivalence
    \[b\simeq_{\omega} Fa\text{ in } \cB\]
    \item\label{surjomegaequi} and the $\omega$-functor $F$ is \emph{surjective on $k$-morphisms up to $\omega$-equivalence}, meaning that, for all $k$-morphisms $x,x'$ in $\cA$ and all $(k+1)$-cells $\beta\colon F_kx \to F_kx'$ in $\cB$, there is a $(k+1)$-cell $\alpha\colon x \to x'$ in $\cA$ so that
    \[\beta\simeq_{\omega} F_{k+1}\alpha\quad\text{ in }\quad\cB(s_0(\beta),t_0(\beta))(\dots)(s_k(\beta),t_k(\beta)).\]
\end{enumerate}
Two $\omega$-categories $\cA$ and $\cB$ are \emph{$\omega$-equivalent}, and we write $\cA\simeq_\omega \cB$, if there is an $\omega$-equivalence between them.
\end{defn}

\begin{rmk}
In the context of \cref{equiofomegacat}, Condition \ref{surjomegaequi} can be replaced with
\begin{enumerate}[leftmargin=*]
    \item[(2')] the $\omega$-functor $F$ is a \emph{hom-wise $\omega$-equivalence}, meaning that for every $a,a'\in\Ob \cA$ the morphism $F$ induces an $\omega$-equivalence
    \[F_{a,a'}\colon \cA(a,a')\simeq_{\omega}\cB(Fa,Fa')\text{ of $\omega$-categories }.\]
\end{enumerate}
\end{rmk}

For an $\omega$-functor $F$, having an inverse implies being an equivalence of $\omega$-categories:

\begin{rmk}
An $\omega$-functor $F\colon \cA\to \cB$  is an $\omega$-equivalence if
there exists an $\omega$-functor $G\colon \cB\to \cA$ and $\omega$-equivalences 
    \[\eta\colon\id_\cA\simeq_{\omega} G\circ F\text{ in } \omega\cat(\cA,\cA)\text{ and }\varepsilon\colon F\circ G\simeq_{\omega}\id_\cB\text{ in } \omega\cat(\cB,\cB).\]
\end{rmk}

Once again, in general, one should not expect that an equivalence of $\omega$-categories should in general have an inverse (cf.~\cref{ExampleLack}).

%%%

\subsubsection{The walking $\omega$-equivalence}

We seek an indexing shape -- which we will denote by $\omega\cE$ -- parametrizing $\omega$-equivalences in an $\omega$-category. The same construction occurs in the literature as $R_1$ from \cite[Remark 4.4]{AraLucas}. The categorical structure of $\omega\cE$ essentially encodes the structure given by the set of \emph{witnesses} from
\cite{ChengOmegaDuals},
the \emph{reversibility} structure from \cite[Remark 4.4]{AraLucas} and
the \emph{quasi-invertible} structure from
\cite[\textsection 1.4]{RiceCoinductive}
\begin{const}
\label{mapnEtonext}
By induction on $n\geq0$, one gets a map
\[(n+1)\cE\to n\cE.\]
Indeed, for $n=0$ this is
the map
\[\cE\to\cI\]
induced at the pushout level by the map of spans
\[\begin{tikzcd}[column sep=3.15cm]
\Sigma\cI\amalg\Sigma\cI\arrow[d]&\partial\cC_2\amalg\partial\cC_2\arrow[l]\arrow[r,"{[[f\circ g, \id_b], [\id_a, g\circ f]]}"]\arrow[dd, equals]
&\cP\arrow[dd, equals]\\
\Sigma\cC_0\amalg\Sigma\cC_0 \arrow[d, "\cong" swap]&&\\
\cC_1\amalg\cC_1&\partial\cC_2\amalg\partial\cC_2\arrow[l]\arrow[r,"{[[f\circ g, \id_b], [\id_a, g\circ f]]}" swap]&\cP
\end{tikzcd}\]
and for $n>1$ it is the map
\[(n+1)\cE\to n\cE.\]
induced at the pushout level by the map of spans
\[\begin{tikzcd}[column sep=3cm]
\Sigma\eq n\amalg\Sigma\eq{n}\arrow[d]&\partial\cC_2\amalg\partial\cC_2\arrow[l]\arrow[r,"{[(f\circ g, \id_b), (\id_a, g\circ f)]}"]\arrow[d, equals]
&\cP\arrow[d, equals]\\
\Sigma\eq{(n-1)}\amalg\Sigma\eq{(n-1)}&\partial\cC_2\amalg\partial\cC_2\arrow[l]\arrow[r,"{[(f\circ g, \id_b), (\id_a, g\circ f)]}" swap]&\cP
\end{tikzcd}\]
\end{const}

\begin{const}[The walking $\omega$-equivalence]
We denote by $\omega\cE$ the \emph{walking $\omega$-equivalence}, obtained as the limit in $\omega$-categories of the maps from \cref{mapnEtonext}  \[\omega\cE:=\lim[\quad\dots\to\eq{(n+1)}\to \eq{n}\to\dots\to4\cE\to3\cE\to\mathrm{bi}\cE\to\cE\to\cI\quad].\]
\end{const}

The following explains how $\omega\cE$ relates with $n\cE$. Consider the intelligent truncation functor $\inttrunc{n}\colon\omega\cat\to n\cat$ from \cite[\textsection 1.2]{AraMaltsiniotisJoin}, which is the left adjoint to the canonical inclusion $n\cat\hookrightarrow \omega\cat$. Roughly, the functor $\inttrunc{n}$ universally enforces all morphisms in dimension $n+1$ to be equalities.

\begin{rmk}
If $\inttrunc{n}\colon\omega\cat\to n\cat$
denotes the intelligent truncation from \cite[\textsection 1.2]{AraMaltsiniotisJoin}, we have that for every $n$ there is an isomorphism of $n$-categories
\[\inttrunc{n}\omega\cE\cong n\cE.\]
\end{rmk}

The $\omega$-category $\eq{\omega}$ is designed to detect $\omega$-equivalences:

\begin{rmk}
A $1$-morphism $F\colon A\to B$ in an $\omega$-category $\cC$ is an $\omega$-equivalence if and only if there is a solution in $\omega\cat$ to the lifting problem
\[\begin{tikzcd}
{\cC_1}\arrow[r,"F"]\arrow[d,hook]&\cC\\
\eq{\omega}\arrow[ru,dashed]
\end{tikzcd}\]
\end{rmk}

The following can be proven using techniques similar to \cref{Enoncoh}.

\begin{prop}
Note that $\eq{\omega}$ is not contractible, namely
\[\eq{\omega}\not\simeq_\omega\cC_0.\]
\end{prop}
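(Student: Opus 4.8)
The plan is to reduce the claim for $\eq{\omega}$ to the already established non-contractibility of $\cE=1\cE$ from \cref{Enoncoh}, exploiting the intelligent truncation functor. The key point is that the intelligent truncation $\inttrunc{n}\colon\omega\cat\to n\cat$ preserves $\omega$-equivalences and sends them to $n$-equivalences (it is a left Quillen functor for the canonical model structures from \cite{LMW}), so it descends to homotopy categories; in particular, if $\eq{\omega}$ were $\omega$-equivalent to $\cC_0$, then $\inttrunc{2}\eq{\omega}$ would be $2$-equivalent to $\inttrunc{2}\cC_0=\cC_0$.

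First I would invoke the isomorphism $\inttrunc{n}\omega\cE\cong n\cE$ recorded in the remark immediately preceding this proposition, specialized to $n=1$, giving $\inttrunc{1}\omega\cE\cong\cE$. Next I would argue that $\inttrunc{1}$ carries $\omega$-equivalences of $\omega$-categories to $1$-equivalences (i.e.\ ordinary equivalences) of categories: this follows because $\inttrunc{1}$ is left adjoint to the inclusion $\cat\hookrightarrow\omega\cat$ and is left Quillen with respect to the canonical model structures (so it preserves weak equivalences between cofibrant objects, and all $\omega$-categories are cofibrant in the folk model structure). Applying this to a hypothetical $\omega$-equivalence $\omega\cE\simeq_\omega\cC_0$ would yield an equivalence of categories $\cE=\inttrunc{1}\omega\cE\simeq\inttrunc{1}\cC_0=\cC_0$, contradicting \cref{Enoncoh}. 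Hence $\eq{\omega}\not\simeq_\omega\cC_0$.

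An alternative, more self-contained route mirrors the polygraphic homology proof of \cref{Enoncoh}: one computes the polygraphic homology of $\omega\cE$ directly. The cells of $\omega\cE$ are generated by the objects $a,b$, the $1$-cells $f,g$, and then in each dimension $k\geq2$ by new cells witnessing reversibility (the $\eta,\varepsilon$ at level $2$, the $\Phi,\Psi$ at level $3$, and so on), and the associated chain complex $\lambda(\omega\cE)$ has $H_2$ again equal to $\mathbb Z$, generated by $\eta+\varepsilon$, since the higher generators only affect homology in degrees $\geq3$ and the degree-$2$ part of the complex is unchanged from $\lambda\cE$ under the truncation map. Invoking \cite[Proposition 4.3.3]{GuettaThesis} once more gives $\omega\cE\not\simeq_\omega\cC_0$. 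The main obstacle in this second approach is bookkeeping: one must check carefully that adding the infinite tower of higher reversibility cells does not create new $2$-chains or kill the class $[\eta+\varepsilon]$, which requires understanding the generating $2$-polygraph of $\omega\cE$; the truncation argument sidesteps this entirely, so I would present that as the main proof and mention the homological computation only as a remark.

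\begin{proof}
Recall from the remark preceding the statement that $\inttrunc{1}\omega\cE\cong\cE$, where $\inttrunc{1}\colon\omega\cat\to\cat$ is the intelligent truncation, which is left adjoint to the inclusion $\cat\hookrightarrow\omega\cat$ and is left Quillen for the canonical model structures; since every $\omega$-category is cofibrant in the folk model structure, $\inttrunc{1}$ sends $\omega$-equivalences to equivalences of categories. If there were an $\omega$-equivalence $\omega\cE\simeq_\omega\cC_0$, applying $\inttrunc{1}$ would produce an equivalence of categories $\cE\cong\inttrunc{1}\omega\cE\simeq\inttrunc{1}\cC_0=\cC_0$, contradicting \cref{Enoncoh}. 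Hence $\eq{\omega}\not\simeq_\omega\cC_0$.
\end{proof}
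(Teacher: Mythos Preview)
Your truncation strategy is reasonable, but the argument as written has two genuine gaps.

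First, there is an indexing error (possibly inherited from an off-by-one in the remark you cite). The functor $\inttrunc{1}$ lands in $1$-categories, whereas $\cE=1\cE$ is a $2$-category; concretely $\inttrunc{1}\omega\cE\cong\cI$, since forcing the $2$-cells $\eta,\varepsilon$ and their reverse-candidates to be identities imposes $gf=\id_a$ and $fg=\id_b$ on the underlying category $\cP$. But $\cI\simeq\cC_0$ by \cref{Icontractible}, so no contradiction results. The identification you actually need is $\inttrunc{2}\omega\cE\cong\cE$, and you must use $\inttrunc{2}$ to invoke \cref{Enoncoh}; you even write $\inttrunc{2}$ in your preliminary discussion before silently switching to $\inttrunc{1}$ in the proof. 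Second, the assertion that every $\omega$-category is cofibrant in the folk model structure of \cite{LMW} is false: the cofibrant objects are the retracts of polygraphs, and for instance $\cI$ itself (viewed as an $\omega$-category) is not one. What you need is that $\omega\cE$ specifically is cofibrant; this is plausible---it is identified with $R_1$ of \cite{AraLucas}, which has a cellular description---but it is not automatic from general principles and you have not justified it.

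With both corrections in place the truncation argument goes through and gives a clean alternative route. The paper itself supplies no proof, merely pointing to the techniques of \cref{Enoncoh}: either exhibit an unsolvable lifting problem against a generating cofibration in $\omegacat$, or compute the polygraphic homology directly. Your second sketch is precisely the latter and is closer to what the paper has in mind; your observation that the degree-$2$ part of the abelianization is unchanged by the higher reversibility cells is the right point, though it does require knowing the generating polygraph of $\omega\cE$ in low dimensions.
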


\subsubsection{The possibly coherent walking $\omega$-equivalence}

We know for abstract reasons (cf.~\cite[\textsection4.7]{LMW})
that there exists a contractible $\omega$-category $\widetilde{\omega\cE}$ with two objects, and it will be shown in the forthcoming manuscript \cite{PolyBook} that such $\omega$-category $\widetilde{\omega\cE}$ would automatically parametrize $\omega$-equivalences.

However, we are not aware of a known model for this $\omega$-category in the literature, and we consider a candidate $\widehat{\omega\cE}$ here. This is inspired by conversations with Rice related to \cite[Definition 11]{RiceCoinductive} and conversations with Ara, M\'etayer, and Mimram. It is also possibly related to  \cite[Construction 4.29]{HenryLoubaton}.

\begin{const}
\label{constructionJ}
We denote by $\cQ$ the free category generated by three morphisms $f\colon a\to b$, $g\colon  b\to a$ and $g'\colon b\to a$. This is obtained by gluing $f$ ``head-to-tail'' with both $g$ and $g'$,
and generating all possible compositions.
The set of objects is $\Ob\cQ=\{a, b\}$.
The category $\cQ$ can be understood as the pushout of categories
\[
\begin{tikzcd}[column sep=3.15cm]
\partial\cC_1\amalg\partial\cC_1^{\op}\amalg\partial\cC_1^{\op}\arrow[r,""]\arrow[d,hook] \arrow[dr, phantom, "\pushout", very near end, yshift=0.2cm, xshift=0.1cm]&\cC_0\amalg\cC_0\arrow[d]\\
\cC_1\amalg\cC_1^{\op}\amalg\cC_1^{\op}\arrow[r]&\cQ
\end{tikzcd}
\]
\end{const}

\begin{const}
\label{Jtruncated}
For $n\ge1$, we define inductively $\widehat{\omega\cE}^{(n)}$ to be an $\omega$-category (in fact an $n$-category) coming with a map $\widehat{\omega\cE}^{(n)}\to\widehat{\omega\cE}^{(n+1)}$:
\begin{itemize}[leftmargin=*]
\item Set $\widehat{\omega\cE}^{(0)}$ to be $\cC_0\amalg\cC_0$ and $\widehat{\omega\cE}^{(1)}$ to be $\cQ$.
\item For $n\ge 2$, set $\widehat{\omega\cE}^{(n)}$ to be the pushout of $\omega$-categories
\[
\begin{tikzcd}[column sep=3.15cm]
\Sigma(\widehat{\omega\cE}^{(n-2)})\amalg\Sigma(\widehat{\omega\cE}^{(n-2)}) \arrow[r, "{[[g\circ f,\id_a],[f\circ g', \id_b]]}"] \arrow[d] \arrow[dr, phantom, "\pushout", very near end, yshift=0.2cm, xshift=0.1cm]& \widehat{\omega\cE}^{(n-1)} \arrow[d] \\
\Sigma(\widehat{\omega\cE}^{(n-1)})\amalg\Sigma(\widehat{\omega\cE}^{(n-1)})\arrow[r]  &\widehat{\omega\cE}^{(n)} \\
\end{tikzcd}
\]
\end{itemize}
\end{const}

\begin{const}[The walking $\omega$-equivalence]
\label{constomegahat}
We denote by $\widehat{\omega\cE}$
the \emph{possibly coherent walking $\omega$-equivalence}, obtained as the colimit in $\omega$-categories
\[\widehat{\omega\cE}:=\colim[\quad\dots\leftarrow\widehat{\omega\cE}^{(n+1)}\leftarrow \widehat{\omega\cE}^{(n)}\leftarrow\dots\leftarrow\widehat{\omega\cE}^{(3)}\leftarrow\widehat{\omega\cE}^{(2)}\leftarrow\widehat{\omega\cE}^{(1)}\quad].\]
\end{const}

The $\omega$-category $\widehat{\omega\cE}$ is, evidently, an $\omega$-category with non-trivial morphisms in each dimension, and in particular it is not an $n$-category for any finite $n$. However, there are (at least) two natural ways to ``approximate'' it by an $n$-category, given by considering the left and right adjoint of the canonical inclusion $i_n\colon n\cat\to\omega\cat$, discussed in \cite[\textsection 1.2]{AraMaltsiniotisJoin}. We have already mentioned the left adjoint, the intelligent truncation $\inttrunc n\colon\omega\cat\to n\cat$, and we now consider as well the right adjoint, the \emph{rough truncation} $\roughtrunc n\colon\omega\cat\to n\cat$. Essentially, the functor $\roughtrunc n$ forgets all morphisms in dimension higher than $n$.

\begin{rmk}
By construction, the rough $n$-truncation of $\widehat{\omega\cE}$ is isomorphic to the $n$-th layer from \cref{Jtruncated}, namely
\[\roughtrunc n\widehat{\omega\cE}\cong\widehat{\omega\cE}^{(n)}.\]
Instead, the computation of the intelligent $n$-truncation of $\widehat{\omega\cE}$ is non-trivial. For lower levels, one can show that there are identifications
\begin{equation}
\label{inttruncofJ}  
\inttrunc{0}\widehat{\omega\cE}\cong\cC_0
\quad\text{ and }\quad
\inttrunc{1}\widehat{\omega\cE}\cong\cI
\quad\text{ and }\quad
\inttrunc{2}\widehat{\omega\cE}\simeq\cE^{\mathrm{adj}}.
\end{equation}
The computation for $\inttrunc{0}\widehat{\omega\cE}$ is a straightforward check. The one for $\inttrunc{1}\widehat{\omega\cE}$ relies on the fact that an isomorphism in a category can be described as discussed in \cref{IsoTwoInverses}. The one for $\inttrunc{2}\widehat{\omega\cE}$ is already delicate, and we briefly sketch an argument, leaving the details to the interested reader. If $c^{\natural}\colon\psh{t\Delta}_{(\infty,2)}\to2\cat$ denotes the left Quillen functor from \cite[Construction 4.8]{Nerves2Cat}, one can obtain
biequivalences 
\begin{align*}
    \inttrunc 2\widehat{\omega\cE}
     & \simeq c^{\natural}\left(\Delta[3]_{\mathrm{eq}} \aamalg{\Delta[1]_t\amalg\Delta[1]_t} (\Delta[0]\amalg \Delta[0])\right)\simeq c^{\natural}\Delta[0]\cong\cC_0\simeq\cE^{\mathrm{adj}}.
\end{align*}
\end{rmk}

We wonder whether $\widehat{\omega\cE}$ is contractible:

\begin{question}
\label{Q}
Is it true that the $\omega$-category $\widehat{\omega\cE}$ is \emph{contractible}, namely that there is an $\omega$-equivalence
\[\widehat{\omega\cE}\simeq_\omega\cC_0\text{ of $\omega$-categories}?\]
\end{question}

The potential contractibility of $\widehat{\omega\cE}$ can be formulated in several equivalent ways, as follows.

\begin{prop}
The following are equivalent.
\begin{enumerate}[leftmargin=*, ref=(\arabic*)]
    \item\label{HatOmega} There is an $\omega$-equivalence \[\widehat{\omega\cE}\simeq_\omega\cC_0 \text{ of $\omega$-categories}.\]
    \item\label{HatTrunc} For all $n\geq0$ there is an $n$-equivalence  \[\inttrunc{n}\widehat{\omega\cE}\simeq_{n}\cC_0\text{ of $n$-categories}.\]
        \end{enumerate}
\end{prop}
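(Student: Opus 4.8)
The plan is to pass through an elementary ``connectivity'' reformulation of contractibility and then exploit the explicit effect of the intelligent truncation on cells.

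Since $\cC_0$ is terminal and, by \cite{LMW}, the $\omega$-equivalences (resp.\ the $n$-equivalences) are the weak equivalences of a model structure on $\omega\cat$ (resp.\ $n\cat$), hence satisfy two-out-of-three, the relation $\widehat{\omega\cE}\simeq_\omega\cC_0$ is equivalent to ``the canonical $\omega$-functor $p\colon\widehat{\omega\cE}\to\cC_0$ is an $\omega$-equivalence'', and $\inttrunc{n}\widehat{\omega\cE}\simeq_n\cC_0$ to ``the canonical $n$-functor $\inttrunc{n}\widehat{\omega\cE}\to\cC_0$ is an $n$-equivalence''. The first key step is a combinatorial characterization obtained by unfolding the definitions. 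Because every positive-dimensional cell of $\cC_0$ is an identity, \cref{equiofomegacat} says that $p$ is an $\omega$-equivalence exactly when $\Ob\widehat{\omega\cE}\neq\varnothing$ and, for every $k\geq 0$, every parallel pair of $k$-cells of $\widehat{\omega\cE}$ is connected by a $(k+1)$-cell (for $k=0$: any two objects): indeed the clause ``surjective on $k$-morphisms up to $\omega$-equivalence'' collapses to this plain filling condition, its quantifier over $x,x'$ being read as ranging over parallel pairs and the $\omega$-equivalence ``$\beta\simeq_\omega F_{k+1}\alpha$'' being an identity of identities. An induction on $n$ unfolding the hom-wise clause of \cref{nequiofncat} gives the analogous statement dimension by dimension: for an $n$-category $\cA$, $\cA\simeq_n\cC_0$ holds iff $\Ob\cA\neq\varnothing$, every parallel pair of $k$-cells is connected by a $(k+1)$-cell for $0\leq k\leq n-1$, and the connecting $n$-cell of a parallel pair of $(n-1)$-cells is moreover unique (for $n=0$ this reads ``$\cA$ is a one-point set''). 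Finally I would record the standard description of $\inttrunc{n}$ (see \cite[\textsection 1.2]{AraMaltsiniotisJoin}): it leaves the cells of dimension $<n$ untouched, replaces the $n$-cells by their quotient under the congruence generated by ``being connected by an $(n+1)$-cell'', and forgets the higher cells; in particular, for a fixed parallel pair of $k$-cells, the $(k+1)$-cells of $\inttrunc{k+1}\cD$ between them form a quotient of the corresponding set for $\cD$.

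For $(1)\Rightarrow(2)$, fix $n$. The $n$-category $\inttrunc{n}\widehat{\omega\cE}$ is nonempty, and below dimension $n$ its cells coincide with those of $\widehat{\omega\cE}$, so the filling conditions for $0\leq k<n$ are inherited from $(1)$ (for $k=n-1$ one pushes a filling $n$-cell forward to the quotient). The one extra point---uniqueness of the connecting $n$-cell in $\inttrunc{n}\widehat{\omega\cE}$---is precisely where the case $k=n$ of $(1)$ enters: any two parallel $n$-cells of $\widehat{\omega\cE}$ are connected by an $(n+1)$-cell, hence become equal in the quotient defining the $n$-cells of $\inttrunc{n}\widehat{\omega\cE}$. (For $n=0$ this recovers $\inttrunc{0}\widehat{\omega\cE}\cong\cC_0$, cf.\ \eqref{inttruncofJ}.) This implication also admits a model-categorical proof: $\inttrunc{n}$ is left Quillen for the folk model structures (its right adjoint $n\cat\hookrightarrow\omega\cat$ preserving fibrations and trivial fibrations), and $\widehat{\omega\cE}$ is cofibrant since \cref{constructionJ,Jtruncated,constomegahat} build it by a sequential composite of pushouts of suspensions of cofibrations starting from the free, hence cofibrant, category $\cQ$; Ken Brown's lemma then carries the $\omega$-equivalence $\widehat{\omega\cE}\to\cC_0$ to an $n$-equivalence $\inttrunc{n}\widehat{\omega\cE}\to\inttrunc{n}\cC_0\cong\cC_0$.

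For $(2)\Rightarrow(1)$, I would verify the connectivity characterization for $\widehat{\omega\cE}$ directly. It is nonempty; and given a parallel pair of $k$-cells $x,x'$ of $\widehat{\omega\cE}$, apply $(2)$ with $n=k+1$: since $\inttrunc{k+1}$ fixes the cells of dimension $\leq k$, the pair $x,x'$ is still parallel in $\inttrunc{k+1}\widehat{\omega\cE}\simeq_{k+1}\cC_0$, so by the characterization it is connected by a $(k+1)$-cell there; by the quotient description of $\inttrunc{k+1}$ recalled above, the set of $(k+1)$-cells of $\widehat{\omega\cE}$ from $x$ to $x'$ is then nonempty, i.e.\ $x$ and $x'$ are connected by a $(k+1)$-cell of $\widehat{\omega\cE}$. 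Hence $\widehat{\omega\cE}$ is connected in every dimension and, being nonempty, is $\omega$-equivalent to $\cC_0$ by the characterization; this is $(1)$.

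The step I expect to be the main obstacle is the combinatorial characterization used throughout: running the induction cleanly requires careful bookkeeping of what ``parallel'' means inside iterated hom-$\omega$-categories and of how \cref{equiofomegacat}\,(2) is to be quantified. The model-categorical route for $(1)\Rightarrow(2)$ avoids this, but at the cost of invoking that the truncation adjunction is Quillen and that the cell-by-cell construction of \cref{constomegahat} yields a cofibrant $\omega$-category, and it does not obviously shortcut $(2)\Rightarrow(1)$.
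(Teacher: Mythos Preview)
Your argument is correct and, for the harder implication \ref{HatTrunc}$\Rightarrow$\ref{HatOmega}, genuinely different from the paper's. Both proofs identify ``$\widehat{\omega\cE}\simeq_\omega\cC_0$'' with the map $\widehat{\omega\cE}\to\cC_0$ being an acyclic fibration, i.e.\ with the filling conditions against $\partial\cC_k\hookrightarrow\cC_k$; this is exactly your ``connectivity'' characterization. For \ref{HatOmega}$\Rightarrow$\ref{HatTrunc} your model-categorical variant (left Quillen $\inttrunc{n}$, cofibrant $\widehat{\omega\cE}$) is the paper's argument verbatim. The divergence is in \ref{HatTrunc}$\Rightarrow$\ref{HatOmega}: the paper routes the lifting problem $\partial\cC_k\to\widehat{\omega\cE}$ through the \emph{rough} truncation, using $\roughtrunc{k+2}\widehat{\omega\cE}\cong\roughtrunc{k+2}\inttrunc{k+4}\widehat{\omega\cE}$ and then transposing along $i_{k+2}\dashv\roughtrunc{k+2}$ to reduce to a lifting problem against the acyclic fibration $\inttrunc{k+4}\widehat{\omega\cE}\to\cC_0$. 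You instead invoke only the \emph{intelligent} truncation $\inttrunc{k+1}$ and its concrete description as a quotient on top cells: a filler in the quotient forces a filler upstairs because a surjection with nonempty target has nonempty source. Your route is more elementary, avoids the rough truncation altogether, and uses \ref{HatTrunc} only at $n=k+1$ rather than $n=k+4$; the paper's route has the virtue of staying entirely within the lifting calculus of the model structures without unpacking what $\inttrunc{n}$ does to cells. Your caveat about reading the quantifier in \cref{equiofomegacat}(2) over parallel pairs is the right one; the cleanest way to justify it is to use the equivalent condition~(2') (hom-wise $\omega$-equivalence), which makes the induction on hom-$\omega$-categories transparent.
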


\begin{proof}
Recall from \cite[Theorem 4.39]{LMW} (resp.~\cite[Theorem 6.1]{LMW}) the canonical model structure on $\omega\cat$ (resp.~$n\cat$), in which every object is fibrant and the weak equivalences are precisely the equivalences of $\omega$-categories from \cref{equiofomegacat} (resp.~the $\omega$-equivalences of $n$-categories from \cref{nequiofncat}). The fact that \ref{HatOmega} implies \ref{HatTrunc} is a consequence of the fact that $\inttrunc{n}\colon\omega\cat\to n\cat$ is a left Quillen functor and $\widehat{\omega\cE}$ is cofibrant by construction, and we now show that \ref{HatTrunc} implies \ref{HatOmega}.

Saying that there is an equivalence of $\omega$-categories $\widehat{\omega\cE}\simeq_\omega\cC_0$ as in \ref{HatOmega} is equivalent to saying that the unique map $\widehat{\omega\cE}\to\cC_0$ is an acyclic fibration in $\omega\cat$. Given the explicit set of generating cofibrations for $\omega\cat$ from \cite[Theorem 4.39]{LMW}, the same statement amounts to solving for all $k\geq 0$ a generic lifting problem in $\omega\cat$ of the form \[\begin{tikzcd}
    \partial\cC_{k}\arrow[r]\arrow[d,hook]&\widehat{\omega\cE}\\
    \cC_{k}\arrow[ru, dashed]&
    \end{tikzcd}\]
    Given the naturality square (of the counit of the adjunction $i_{k+2}\dashv \roughtrunc{k+2}$ on the given map $\partial\cC_k\to\widehat{\omega\cE}$)
    \[\begin{tikzcd}
        \partial\cC_k\arrow[r]
        &\widehat{\omega\cE}\\
        \roughtrunc{k+2}\partial\cC_k\arrow[r]\arrow[u,"\cong"]&\roughtrunc{k+2}\widehat{\omega\cE}\arrow[u]
    \end{tikzcd}\]
we see that the map $\partial\cC_k\to\widehat{\omega\cE}$ must factor through $\roughtrunc{k+2}\widehat{\omega\cE}$ and it hence suffices to solve the lifting problem in $(k+2)\cat$
    \[\begin{tikzcd}
    \partial\cC_{k}\arrow[r]\arrow[d,hook]&\roughtrunc{k+2}\widehat{\omega\cE}\\
    \cC_{k}\arrow[ru, dashed]&
    \end{tikzcd}\]
    Given the isomorphism
    \[\roughtrunc{k+2}\widehat{\omega\cE}\cong\roughtrunc{k+2}\inttrunc{k+4}\widehat{\omega\cE}\]
it suffices to solve the lifting problem $(k+2)\cat$
    \[\begin{tikzcd}
    \partial\cC_{k}\arrow[r]\arrow[d,hook]&\roughtrunc{k+2}\inttrunc{k+4}\widehat{\omega\cE}\\
    \cC_{k}\arrow[ru, dashed]&
    \end{tikzcd}\]
By transposing along the adjunction $i_{k+2}\dashv\roughtrunc{k+2}$,
    it suffices to solve the lifting problem of $(k+4)$-categories
        \[\begin{tikzcd}
    \partial\cC_{k}\arrow[r]\arrow[d,hook]&\inttrunc{k+4}\widehat{\omega\cE}\\
    \cC_{k}\arrow[ru, dashed]&
    \end{tikzcd}\]
    Given that there is an equivalence $\inttrunc{k+4}\widehat{\omega\cE}\simeq_n\cC_0$ by \ref{HatTrunc} with $n=k+4$, and given that $\partial\cC_k\hookrightarrow\cC_k$ is a cofibration in $(k+4)\cat$, we obtain that
this lifting problem admits a solution, as desired.
\end{proof}

Given \eqref{inttruncofJ} -- combined with \cref{Icontractible,Eadjcoh} -- we know that the equivalent conditions \ref{HatOmega} and \ref{HatTrunc} hold for $n\leq2$, but we don't have a proof for higher $n$.

Indications towards the potential contractibility of $\widehat{\omega\cE}$ are the following.
\begin{itemize}[leftmargin=*]
    \item The polygraphic homology $H_*\widehat{\omega\cE}$ of the $\omega$-category $\widehat{\omega\cE}$ vanishes in positive degrees, and this is by \cite[\textsection 4.3]{GuettaThesis} a necessary condition for the contractibility of $\widehat{\omega\cE}$.
    \item It is plausible that the $\omega$-category $\widehat{\omega\cE}$ agrees with the construction from \cite[Definition 11]{RiceCoinductive}, which is shown to be contractible in the sense of \cite[Definition~21]{RiceCoinductive} in \cite[Theorem~22]{RiceCoinductive}, and this is likely a necessary condition for the contractibility of $\widehat{\omega\cE}$.
\end{itemize}

However, depending on how the construction from \cite[Construction 4.29]{HenryLoubaton} is related to \cref{constomegahat}, it is possible that \cite[Lemma 4.33]{HenryLoubaton} implies that the answer to \cref{Q} is no.

\section{Equivalences of and inside weak higher categories}

Throughout this section, we assume the basic language and theory of Kan complexes, which we refer to as \emph{spaces}, and of quasi-categories, which we refer to as \emph{$\infty$-categories}. In particular, we will allow ourselves to cite results about spaces and $\infty$-categories when needed.

Building on this prerequisite, we will then give an informal introduction to the notion of $(\infty,n)$-categories and the notion of appropriate equivalence between $(\infty,n)$-categories and inside an $(\infty,n+1)$-category. Although the details will be some times omitted, all the statement are rigorous.

Following this convention, an $\infty$-category will always refer to a quasi-category (so really a simplicial set that admits lifts of inner horns), while an $(\infty,1)$-category would refer to the general notion, of which quasi-categories are just one incarnation. To exemplify this convention, here's one possible sentence: \emph{An $(\infty,1)$-category can be presented by different models. For instance, it could be presented by an $\infty$-category, i.e., a quasi-category, or by a complete Segal space.}

We will continue reserving the use of calligraphic letters, such as $\cA, \cB$, $\cC$, $\cI$, for (strict!) $n$-categories, consistently with the previous section. Instead, we will reserve script letters, such as $\mathscr A, \mathscr B$, $\mathscr C$, $\mathscr I$ for $(\infty,n)$-categories or $\infty$-categories. To showcase the convention, here are some relevant examples that will feature in this section:
\begin{itemize}
\item $n\cat$ denotes the ($1$-)category of $n$-categories;
\item $n\mathscr Cat$ will denote the $\infty$-category of $n$-categories;
\item $(\infty,n)\mathscr Cat$
will denote the $\infty$-category of $(\infty,n)$-categories;
\item $\msset$ denotes the category of marked simplicial sets.
\item $\msset_{(\infty,n)}$ will denote the model structure for $(\infty,n)$-categories on the category of marked simplicial sets, whose underlying $\infty$-category is $(\infty,n)\mathscr Cat$;
\end{itemize}

\subsection{Review of $(\infty,n)$-categories}
\subsubsection{$(\infty,n)$-categories}

An \emph{$(\infty,0)$-category} is an $\infty$-groupoid, a.k.a.~a space. Spaces assemble into a (cartesian closed) $\infty$-category $(\infty,0)\mathscr Cat$. One way to construct it is as the $\infty$-category underlying the (cartesian) Kan--Quillen model structure for Kan complexes.

Let now $n>0$. Assuming to know what is an $(\infty,n-1)$-category and what is the cartesian $\infty$-category $(\infty, n-1)\mathscr Cat$ of $(\infty,n-1)$-categories, we now recall by induction on $n>0$ the idea of an $(\infty,n)$-category and the cartesian $\infty$-category $(\infty,n)\mathscr Cat$.

 Without loss of generality\footnote{Some features in this presentation of the notion are not intrinsic to the notion of an $(\infty,n)$-category, meaning that they are not invariant under the appropriate notion of equivalence that will be introduced in \cref{00equivalences}. One of these is, for instance, the set of objects. Although it may appear to be a potential issue, it is not.
 }
 we can assume that a (\emph{small}) \emph{$(\infty,n)$-category}
$\mathscr C$ for $n>0$ consists in particular of
\begin{itemize}[leftmargin=*]
\item a set $\Ob\mathscr C$ of objects
\item for every $c,c'\in\Ob\mathscr C$ a \emph{hom-$(\infty,n-1)$-category} $\mathscr C(c,c')$,
\item an identity operator function for every
$c\in\Ob\mathscr C$ 
\[\id_c\colon\{\term\}\to\mathscr C(c,c)\quad\quad\term \mapsto \id_c\]
\item 
and a \emph{composition} $1$-simplex for every $c,c',c''\in\Ob\mathscr C$ in the $\infty$-category $(\infty,n)\mathscr Cat$
\[\circ\colon\mathscr C(c,c')\times\mathscr C(c',c'')\to\mathscr C(c,c'')\quad\quad(f,g) \mapsto g\circ f\]
\end{itemize}
satisfying for every $c,c',c'',c'''\in\Ob\mathscr C$ the \emph{associativity axiom} from \eqref{associativity}, given by the commutativity of the diagram in the $\infty$-category $(\infty,n-1)\mathscr Cat$
\begin{equation}
\label{HoAssociativity}
\begin{tikzcd}[column sep=3cm]
\mathscr C(c,c')\times\mathscr C(c',c'')\times\mathscr C(c'',c''')\arrow[r,"{\id_{\mathscr C(c,c')}\times\circ}"]\arrow[d, "{\circ\times\id_{\mathscr C(c'',c''')}}" swap]&\mathscr C(c,c')\times\mathscr C(c',c''')\arrow[d,"\circ"]\\
\mathscr C(c,c'')\times\mathscr C(c'',c''')\arrow[r,"\circ" swap]
&\mathscr C(c,c''')
\end{tikzcd}
\end{equation}
for every $c,c'\in\Ob\mathscr C$ the \emph{unitality axiom} from \eqref{unitality}, also in the $\infty$-category $(\infty,n)\mathscr Cat$. A complete definition would also require coherent homotopies expressing the associativity and unitality of iterations of the composition operator. There are several ways to make this precise, including -- but not limited to -- categories strictly enriched over a cartesian closed model structure for $(\infty,n-1)$-categories as in \cite[Theorem 3.11]{br1}
or $\infty$-categories enriched over the $\infty$-categories of $(\infty,n-1)$-categories as in \cite[\textsection5,6]{GH}.

Given an $(\infty,n)$-category $\cC$, one sees -- by induction on $n$ -- that $\cC$ has $k$-morphisms for $k>0$ and several composition maps along morphisms of a lower dimension, similarly to the strict cases, where axioms are replaced with lots of coherence data. With respect to these composition maps, all morphisms in dimension higher than $n$ are invertible. This is consistent with the fact that
an $(\infty,1)$-category recovers the notion of an \emph{$\infty$-category}.

Assuming that an $(\infty,0)$-functor is a map of spaces, and assuming to know what is an $(\infty,n-1)$-functor, we now recall by induction on $n>0$ the idea of an $(\infty,n)$-category and the cartesian $\infty$-category $(\infty,n)\mathscr Cat$.
Given $(\infty,n)$-categories $\mathscr C$ and $\mathscr D$, for $n\ge0$, an $(\infty,n)$-\emph{functor} $F\colon\mathscr C\to\mathscr D$ consists in particular of
\begin{itemize}[leftmargin=*]
\item a function on objects
\[F\colon\Ob\mathscr C\to\Ob\mathscr D\]
\item 
and an $(\infty,n-1)$-functor on hom-$(\infty,n-1)$-categories for every $c,c'\in\Ob\mathscr C$
\[F_{c,c'}\colon\mathscr C(c,c')\to\mathscr D(Fc,Fc'),\]
\end{itemize}
satisfying the \emph{functorial properties} from \eqref{functorial1} and \eqref{functorial2} in the $\infty$-category $(\infty,n)\mathscr Cat$. Again, a complete definition would also require coherent homotopies expressing the associativity and unitality of iterations of the composition operator functorial properties involving more than two inputs.

$(\infty,n)$-categories assemble into a (cartesian closed) $\infty$-category $(\infty,n)\mathscr Cat$, of which the $0$-simplices are $(\infty,n)$-categories and the $1$-simplices are the $(\infty,n)$-functors.
One can build the (cartesian closed) $\infty$-category of $(\infty,n)$-categories $(\infty,n)\mathscr Cat$ using the formalism from \cite[Remark 5.7.13, \textsection 6.1]{GH} (see also \cite{HeineEnrichedInfty}), or by taking the underlying $\infty$-category of one of the (cartesian) model structures for $(\infty,n)$-categories.

Model structures
for $(\infty,1)$-categories include the Joyal model structure quasi-categories (from \cite[\textsection 6.1]{JoyalVolumeII},~\cite[Theorem 2.2.5.1]{htt}), the Bergner model structure for Kan-enriched categories (from \cite{bergner}), and the Lurie model structure for naturally marked quasi-categories (from \cite[Proposition 3.1.3.7]{htt}). Model structures for $(\infty,2)$-categories include the Lurie model structure for $\infty$-bicategories (from \cite[Theorem 4.2.7]{LurieGoodwillie} and \cite[Definition 6.1]{GHL}). Model structures for $(\infty,n)$-categories for general $n$ include the Verity model structure for saturated $n$-complicial sets (from \cite[Theorem~1.25]{or}), the Barwick model structure for $n$-fold complete Segal spaces (from \cite{BarwickThesis}), the Rezk model structure for complete Segal $\Theta_n$-spaces (from \cite[\textsection 11]{rezkTheta}), the Ara model structure for $n$-quasi-categories (from \cite[\textsection 5.17]{ara}), the Bergner--Lurie model structure for categories enriched over $(\infty,n-1)$-categories (from \cite[Theorem A.3.2.24]{htt} applied for instance to \cite[Example A.3.2.23]{htt}), and the Campion--Doherty--Kapulkin--Maehara model structure for saturated $n$-comical sets (from \cite{CKM,DKM}). To see that all these models are equivalent, and precisely that they have an underlying $\infty$-category equivalent to $(\infty,n)\mathscr Cat$, see -- amongst others -- \cite{Bergner3MS,br1,htt,LurieGoodwillie,ara,br2,DKM,GHL,LoubatonEqui}.

Historically, before the notion of $(\infty,n)$-category, other notions of weak $n$-categories were developed in order to encode phenomena of interest. These are sometimes referred to as \emph{weak $n$-categories} or \emph{$(n,n)$-categories}. The idea is that an $(\infty,n)$-category has $k$-morphisms for any $k\geq0$ and all axioms for $k$-morphisms encoding the categorical structure only hold up to an invertible $(k+1)$-morphism. By contrast, an $(n,n)$-category could be seen as a special case of $(\infty,n)$-category, for which there are no non-identity $k$-morphisms in dimension $k>n$.

In the lower dimensions ($n=2$, $n=3$, and to some extent $n=4$), fully algebraic descriptions of weak $n$-categories are available, for instance in the models of \emph{bicategories} \cite{Benabou}, \emph{tricategories} \cite{GPS} and \emph{tetracategories} \cite{TrimbleTetra}. For general $n$, there are also versions of the notion of $(n,n)$-category which are based on higher operads, with the original notion appearing in \cite{BataninWeakn}, and further variants in \cite[\textsection9-10]{LeinsterHigherOperads}. Other (non-fully-algebraic) models of $(n,n)$-categories for general $n$ are Tamsamani categories, originally from \cite{Tamsamani}  and further developed in \cite{HirschowitzSimpson,Pellissier,SimpsonBook,PaoliBook}.

Weak $n$-categories assemble into an $\infty$-category $(n,n)\mathscr Cat$.
For $n\geq0$ there is an $\infty$-category $(n,n)\mathscr Cat$ of \emph{weak $n$-categories} or $n$-truncated $(\infty,n)$-categories. A model for this $\infty$-category was originally constructed as a relative category in \cite{Tamsamani}, and was further studied in \cite{HirschowitzSimpson,Pellissier,SimpsonBook,PaoliBook}. Alternatively, this $\infty$-category can be constructed following \cite[Proposition~6.1.7]{GH} as an $\infty$-localization of the $\infty$-category $(\infty,n)\mathscr Cat$. See also \cite[Remark 6.1.3]{GH} for how the approaches are related.

\subsubsection{Some notable functors}

We discuss ways in which the theory of $(\infty,n)$-categories has to interact with others, such as the theory of strict $n$-categories from 
\cref{EquiStrict}
and the theory of $(\infty,k)$-categories for $k<n$.

\begin{rmk}
\label{InclusionHigher}
For $n\geq0$, there is an inclusion of $\infty$-categories
\[(\infty,n)\mathscr Cat\hookrightarrow
(\infty,n+1)\mathscr Cat,\]
morally given by the fact that any $(\infty,n)$-category can be regarded as an $(\infty,n+1)$-category with no non-identity morphisms in dimension higher than $n$.
This $\infty$-functor admits a right adjoint, called the \emph{core functor},
\[\core_{n}\colon(\infty,n+1)\mathscr Cat\to(\infty,n)\mathscr Cat,\]
which intuitively retains in dimension $n+1$ only the morphisms that are invertible.

The adjunction of $\infty$-categories 
\[(\infty,n)\mathscr Cat\rightleftarrows
(\infty,n+1)\mathscr Cat\colon\mathrm{core}_{n}\]
can be implemented in the model of $(\infty,n)$-categories given by Verity's model structure on saturated $n$-complicial sets. Precisely, it is the underlying adjunction of $\infty$-categories of the adjunction $\mathrm{th}_n\dashv\mathrm{sp}_n$ from \cite[Notation~13]{VerityComplicialI}, which can be shown to be a Quillen pair using \cite[Lemma 25, Corollary 108]{VerityComplicialI}.

In particular, for all $n>0$ there are $\infty$-colimit preserving inclusions of $\infty$-categories 
\[(\infty,0)\mathscr Cat\hookrightarrow(\infty,1)\mathscr Cat\hookrightarrow
(\infty,2)\mathscr Cat\to\dots\hookrightarrow
(\infty,n)\mathscr Cat.\]
Also, for $m\leq n$, there exists a functor
\begin{equation}
\label{totalcore}
\core_m\colon(\infty,n)\mathscr Cat\xrightarrow{\mathrm{core}_{n-1}}(\infty,n-1)\mathscr Cat\xrightarrow{}\dots\xrightarrow{}(\infty,m+1)\mathscr Cat\xrightarrow{\mathrm{core}_{m}} (\infty,m)\mathscr Cat.
\end{equation}
\end{rmk}

\begin{rmk}
For $n\geq0$, if $(\infty,n+1)\mathscr Cat_{*,*}$ denotes the $\infty$-category of bipointed $(\infty,n+1)$-categories, there is a \emph{hom functor}
\[\mathfrak Hom\colon(\infty,n+1)\mathscr Cat_{*,*}\to(\infty,n)\mathscr Cat,\]
which essentially extracts the hom at two given points.
The hom functor admits a left adjoint, called the \emph{suspension functor}
\[\mathfrak S \colon (\infty,n)\mathscr Cat\to(\infty,n+1)\mathscr Cat_{*,*},\]
which builds an $(\infty,n+1)$-category with exactly two objects and one interesting hom between the two objects given by the input.
The adjunction
\[\mathfrak S \colon (\infty,n)\mathscr Cat\rightleftarrows(\infty,n+1)\mathscr Cat_{*,*}\colon\mathfrak Hom\]
is treated as \cite[Definition\ 4.3.21]{GH} and
can be implemented in the model of $(\infty,n)$-categories given by Verity's model structure on saturated $n$-complicial sets via the Quillen pair $\Sigma\dashv\Hom$ from \cite[Lemma~2.7]{ORfundamentalpushouts}, in the model for complete Segal $\Theta_n$-spaces via the left Quillen functor $V[1]$ from \cite[Proposition~4.6]{rezkTheta}, or in the model of categories enriched over complete Segal $\Theta_{n-1}$-spaces.

In total, one can forget the base points and iterate the construction, obtaining a functor
\begin{equation}
\label{totalsuspension}
\mathfrak S^{k-1} \colon (\infty,1)\mathscr Cat\xrightarrow{\mathfrak{S}}\dots\xrightarrow{\mathfrak{S}}(\infty,n)\mathscr Cat\xrightarrow{\mathfrak{S}}(\infty,k)\mathscr Cat.
\end{equation}
\end{rmk}

The following records in which sense the core functor and the hom functor commute with each other. It can also be seen as a variant of \eqref{HomofPi} for $(\infty,n)$-categories.

\begin{prop}
\label{LemmaHom}
Let $n\geq0$.
Let $\mathscr C$ be an $(\infty,n+1)$-category and $A,B\in\Ob\mathscr C$. There is an $(n-1)$-equivalence
of $(\infty,n-1)$-categories
\[
\core_{n-1}(\mathscr C(A,B))\simeq_{n-1}(\core_{n}\mathscr C)(A,B)
\]
\end{prop}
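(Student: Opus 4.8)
The plan is to establish the equivalence by testing both sides against an arbitrary $(\infty,n-1)$-category and invoking the $\infty$-categorical Yoneda lemma; I work with $n\geq 1$, the edge case $n=0$ being a matter of unwinding conventions. Write $\iota_m\colon(\infty,m)\mathscr Cat\hookrightarrow(\infty,m+1)\mathscr Cat$ for the inclusion of \cref{InclusionHigher}, so that $\iota_m\dashv\core_m$, and recall the suspension--hom adjunction $\mathfrak S\dashv\mathfrak Hom$ from the preceding remark, under which $\mathscr C(A,B)=\mathfrak Hom(\mathscr C,A,B)$ holds by definition. Both adjunctions exist in bipointed form as well: $\iota_m$ preserves objects and hence restricts to bipointed $(\infty,m)$-categories, and $\core_m$ only alters the $(m+1)$-dimensional morphisms so it too preserves objects, which gives $\core_n(\mathscr C,A,B)\simeq(\core_n\mathscr C,A,B)$ as bipointed $(\infty,n)$-categories.

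For an arbitrary $(\infty,n-1)$-category $\mathscr X$, I would then assemble the following chain of equivalences of mapping spaces, natural in $\mathscr X$:
\begin{align*}
\Map_{(\infty,n-1)\mathscr Cat}\bigl(\mathscr X,\core_{n-1}(\mathscr C(A,B))\bigr)
&\simeq \Map_{(\infty,n)\mathscr Cat}\bigl(\iota_{n-1}\mathscr X,\mathscr C(A,B)\bigr)\\
&\simeq \Map_{(\infty,n+1)\mathscr Cat_{*,*}}\bigl(\mathfrak S(\iota_{n-1}\mathscr X),(\mathscr C,A,B)\bigr)\\
&\simeq \Map_{(\infty,n+1)\mathscr Cat_{*,*}}\bigl(\iota_n(\mathfrak S\mathscr X),(\mathscr C,A,B)\bigr)\\
&\simeq \Map_{(\infty,n)\mathscr Cat_{*,*}}\bigl(\mathfrak S\mathscr X,(\core_n\mathscr C,A,B)\bigr)\\
&\simeq \Map_{(\infty,n-1)\mathscr Cat}\bigl(\mathscr X,(\core_n\mathscr C)(A,B)\bigr).
\end{align*}
Here the first and fourth steps use $\iota_{n-1}\dashv\core_{n-1}$ and the bipointed form of $\iota_n\dashv\core_n$ (together with $\core_n(\mathscr C,A,B)\simeq(\core_n\mathscr C,A,B)$); the second and fifth use $\mathfrak S\dashv\mathfrak Hom$ in dimensions $n$ and $n-1$ respectively, unwinding $\mathfrak Hom(\mathscr C,A,B)=\mathscr C(A,B)$ and $\mathfrak Hom(\core_n\mathscr C,A,B)=(\core_n\mathscr C)(A,B)$; and the third uses the compatibility $\mathfrak S\circ\iota_{n-1}\simeq\iota_n\circ\mathfrak S$ of functors $(\infty,n-1)\mathscr Cat\to(\infty,n+1)\mathscr Cat_{*,*}$. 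Since the composite is natural in $\mathscr X$, full faithfulness of the Yoneda embedding of $(\infty,n-1)\mathscr Cat$ forces $\core_{n-1}(\mathscr C(A,B))\simeq(\core_n\mathscr C)(A,B)$ in $(\infty,n-1)\mathscr Cat$, which is precisely an $(n-1)$-equivalence of $(\infty,n-1)$-categories.

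The one input that is not pure adjoint bookkeeping is the compatibility $\mathfrak S\circ\iota_{n-1}\simeq\iota_n\circ\mathfrak S$, and I expect this to be the main (if modest) obstacle, since it requires pinning down suspension and the inclusion concretely rather than just by their universal properties. It is cleanest to verify in the model of categories enriched in $(\infty,n-1)$-categories: there $\mathfrak S$ sends an input $\mathscr Y$ to the enriched category on objects $\{0,1\}$ with $\mathrm{Hom}(0,1)=\mathscr Y$, trivial endomorphism objects, and $\mathrm{Hom}(1,0)=\varnothing$, while $\iota$ acts hom-wise by the one-lower inclusion, so both composites visibly send $\mathscr X$ to the two-object $(\infty,n+1)$-category whose only interesting hom is $\iota_{n-1}\mathscr X$; alternatively one argues in complete Segal $\Theta_n$-spaces, where $\mathfrak S$ is the functor $V[1]$ of \cite[Proposition~4.6]{rezkTheta} and $\iota$ is induced by the canonical truncation relating $\Theta_n$ and $\Theta_{n-1}$, and the two functors manifestly commute. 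Everything else is the two adjunctions from the preceding remarks and the Yoneda lemma; one could instead phrase the whole proof as saying that the mate of the commuting square of left adjoints $(\iota,\mathfrak S)$ is an equivalence, but the mapping-space computation above exhibits the required compatibilities most transparently.
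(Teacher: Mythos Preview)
Your proof is correct and is essentially the same as the paper's: the paper observes that the square of left adjoints $\iota_n\circ\mathfrak S\simeq\mathfrak S\circ\iota_{n-1}$ commutes (verified model-categorically, e.g.\ in saturated complicial sets) and then passes to the square of right adjoints via \cite[Proposition~2.1.10]{RiehlVerityBook}, which is exactly the mate argument you mention in your final sentence. Your mapping-space chain together with Yoneda is the explicit unwinding of that passage to right adjoints, so the two arguments coincide.
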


\begin{proof}[Proof of \cref{LemmaHom}]
One can prove (for instance in the model of saturated $n$-complicial sets) that there is a commutative diagram of left adjoint $\infty$-functors:
\[\begin{tikzcd}
   (\infty,n-1)\mathscr Cat \arrow[r,"\mathfrak S"]\arrow[d,hook]&(\infty,n)\mathscr Cat_{*,*}\arrow[d,hook]\\
   (\infty,n)\mathscr Cat \arrow[r,"\mathfrak S" swap]&(\infty,n+1)\mathscr Cat_{*,*}
\end{tikzcd}
\]
So by \cite[Proposition 2.1.10]{RiehlVerityBook} one obtains a commutative diagram of the corresponding right adjoint $\infty$-functors:
\[\begin{tikzcd}
   (\infty,n-1)\mathscr Cat &(\infty,n)\mathscr Cat_{*,*}\arrow[l,"\mathfrak Hom" swap]\\
   (\infty,n)\mathscr Cat \arrow[u,"\core_{n-1}"]&(\infty,n+1)\mathscr Cat_{*,*}\arrow[l,"\mathfrak Hom"]\arrow[u,"\core_{n}" swap]
\end{tikzcd}\]
and this concludes the proof.
\end{proof}

Let $n\mathscr Cat$ denote the $\infty$-category of (strict) $n$-categories, which can be constructed as the underlying $\infty$-category of the canonical model structure for $n$-categories from \cite[Theorem 6.1]{LMW}. We briefly discuss the state of the art of the functors that relate the $\infty$-categories of strict and weak $n$-dimensional categories by distinguishing $n\leq2$ or the case of general $n$.

\begin{rmk}
\label{nerveinclusion}
For $n=0,1,2$, there is an inclusion
of $\infty$-categories\footnote{We say that $X\hookrightarrow Y$ is an \emph{inclusion of $\infty$-categories}
if it is a map of $\infty$-categories that is a hom-wise equivalence of Kan complexes and injective-on-objects up to equivalence in $Y$.}
\[n\mathscr Cat\hookrightarrow
(\infty,n)\mathscr Cat,\]
that essentially regards a strict $n$-category as an $(\infty,n)$-category in the natural way. This $\infty$-functor admits a left adjoint
\[\Pi_{n}\colon(\infty,n)\mathscr Cat\to n\mathscr Cat,\]
which intuitively enforces all morphisms of dimension higher than $n$ and coherence equivalences to be equalities.
The adjunction 
\begin{equation}
\label{AdjPi}
\Pi_{n}\colon(\infty,n)\mathscr Cat\rightleftarrows n\mathscr Cat
\end{equation}
can be realized as a nerve-categorification Quillen reflection in most models of $(\infty,n)$-categories presented by model categories.
For $n=0$, this can be easily implemented in Kan complexes, and for $n=1$, this can be done in quasi-categories, naturally marked quasi-categories, and complete Segal spaces (see e.g.~\cite[\textsection4.2]{MORNerves}).
For $n=2$, this was done in saturated $2$-complicial sets \cite{Nerves2Cat}, in $2$-quasi-categories \cite{CampbellHoCoherent}, in $2$-fold complete Segal spaces \cite{MoserNerve}, and in categories enriched over $(\infty,1)$-categories \cite[\textsection4.2]{MORNerves}, \cite[\textsection1]{GHL}.
For general $n$, the adjunction \eqref{AdjPi} is potentially realized by the Quillen pair from \cite[Definition 4.46, Theorem 4.50]{HenryLoubaton}.
\end{rmk}

Consider the functor
\[\Pi_0\mathrm{core}_0\colon (\infty,n)\mathscr Cat\xrightarrow{\mathrm{core}_0}(\infty,0)\mathscr Cat\xrightarrow{\Pi_0}0\cat=\set.\]
This functor does not preserve $(\infty)$-pullbacks, but the following lemma records a crucial weaker compatibility of this functor with pullbacks that will play a role in \cref{MClemma,MClemma2}.

\begin{lem}
\label{secretpropertyPi0}
Given functors of $(\infty,n)$-categories $\mathscr A\to\mathscr B$ and $\mathscr C\to\mathscr B$, we have that
\[\Pi_0\mathrm{core_0}\left(\mathscr A\ootimes{\mathscr B}\mathscr C\right)\neq\varnothing\quad\text{if and only if}\quad
\Pi_0\mathrm{core_0}\mathscr A\tttimes{\Pi_0\mathrm{core_0}\mathscr B}\Pi_0\mathrm{core_0}\mathscr C\neq\varnothing.\]
\end{lem}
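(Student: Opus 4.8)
The plan is to reduce the statement to a purely space-level assertion. The functor $\mathrm{core}_0\colon(\infty,n)\mathscr Cat\to(\infty,0)\mathscr Cat$ is the composite $\mathrm{core}_0\circ\mathrm{core}_1\circ\cdots\circ\mathrm{core}_{n-1}$ from \eqref{totalcore}, and by \cref{InclusionHigher} each $\mathrm{core}_i$ is a right adjoint, so $\mathrm{core}_0$ is a right adjoint and hence preserves $\infty$-limits, in particular $\infty$-pullbacks. Applying it to the cospan $\mathscr A\to\mathscr B\leftarrow\mathscr C$ therefore yields an equivalence of spaces
\[\mathrm{core}_0\bigl(\mathscr A\ootimes{\mathscr B}\mathscr C\bigr)\simeq \mathrm{core}_0\mathscr A\ootimes{\mathrm{core}_0\mathscr B}\mathrm{core}_0\mathscr C.\]
Since $\Pi_0$ only records the set of path components, it then suffices to prove the following statement about spaces: for a cospan $X\xrightarrow{f}Z\xleftarrow{g}Y$ of spaces, $\Pi_0\bigl(X\ootimes{Z}Y\bigr)\neq\varnothing$ if and only if $\Pi_0 X\tttimes{\Pi_0 Z}\Pi_0 Y\neq\varnothing$.

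For this space-level claim I would show that both sides are equivalent to the existence of points $x\in X$ and $y\in Y$ with $f(x)$ and $g(y)$ in the same path component of $Z$. The right-hand side is immediate: an element of the set-theoretic pullback $\Pi_0 X\times_{\Pi_0 Z}\Pi_0 Y$ is a pair $([x],[y])$ with $[f(x)]=[g(y)]$, and conversely any such pair of points produces one. For the left-hand side I would model $X\ootimes{Z}Y$ as a homotopy pullback via the path-space construction, so that a point of it is a triple $(x,y,\gamma)$ with $\gamma$ a path in $Z$ from $f(x)$ to $g(y)$; the existence of such a point is exactly the existence of $x,y$ with $f(x)$ and $g(y)$ in the same component. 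Chaining the two equivalences gives the desired biconditional, and combining with the first paragraph proves the lemma.

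The point worth emphasizing — and the reason the lemma is phrased as a bare nonemptiness equivalence rather than an isomorphism of sets — is that $\Pi_0$ does \emph{not} commute with $\infty$-pullbacks of spaces, so one cannot in general identify $\Pi_0\bigl(\mathscr A\ootimes{\mathscr B}\mathscr C\bigr)$ with $\Pi_0\mathrm{core}_0\mathscr A\tttimes{\Pi_0\mathrm{core}_0\mathscr B}\Pi_0\mathrm{core}_0\mathscr C$; only their (non)emptiness agrees. I expect no serious obstacle here: the main care needed is bookkeeping, namely invoking "$\mathrm{core}_0$ preserves $\infty$-pullbacks" at the level of the $\infty$-category $(\infty,n)\mathscr Cat$ itself (rather than in some particular model), and checking that the path-space presentation of the homotopy pullback is legitimate in the chosen model of spaces — both of which are standard.
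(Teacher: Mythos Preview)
Your proposal is correct and follows essentially the same approach as the paper: first use that $\mathrm{core}_0$ is a right adjoint to push the $\infty$-pullback inside, then reduce to the space-level statement that $\Pi_0$ of a homotopy pullback is nonempty iff the strict pullback of $\Pi_0$'s is nonempty. The only difference is that the paper outsources the space-level step to a reference, whereas you spell it out via the path-space model of the homotopy pullback.
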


\begin{proof}
Since $\mathrm{core}_0$ preserves $(\infty)$-limits, saying that
\[\Pi_0\mathrm{core_0}\left(\mathscr A\ootimes{\mathscr B}\mathscr C\right)\neq\varnothing\]
is equivalent to saying that
\[\Pi_0\left((\mathrm{core_0}\mathscr A)\ootimes{\mathrm{core_0}\mathscr B}(\mathrm{core_0}\mathscr C)\right)\neq\varnothing,\]
which can be seen (for instance using the argument from \cite{Pi0Stackexchange}) to be equivalent to saying that
\[\Pi_0\mathrm{core_0}\mathscr A\tttimes{\Pi_0\mathrm{core_0}\mathscr B}\Pi_0\mathrm{core_0}\mathscr C\neq\varnothing,\]
so we are done.
\end{proof}

%\[\mathrm{core}_0\{\mathscr A,\mathscr B\}\simeq_?(\infty,n)\mathscr Cat(\mathscr A,\mathscr B)\]

\subsection{$(\infty,n)$-equivalence of $(\infty,n)$-categories}

\label{00equivalences}

We follow the convention that an \emph{$(\infty,-1)$-equivalence} inside an $(\infty,0)$-category is a path.

Let now $n>0$. Assuming to know what is an $(\infty,n-1)$-equivalence inside an $(\infty,n)$-category
we now define inductively for $n\geq0$ an $(\infty,n)$-equivalence inside an $(\infty,n+1)$-category.

\begin{defn}[$(\infty,n)$-equivalence in an $(\infty,n+1)$-category $\mathscr C$]
Let $n\geq0$. A $1$-morphism $F\colon A\to B$ in an $(\infty,n+1)$-category $\mathscr C$ is an \emph{$(\infty,n)$-equivalence} if
and only if there exist a $1$-morphism $G\colon B\to A$ and $(\infty,n-1)$-equivalences
    \[\eta\colon\id_A\simeq_{(\infty,n-1)} G\circ F\text{ in } \mathscr C(A,A)\text{ and }\varepsilon\colon F\circ G\simeq_{(\infty,n-1)}\id_B\text{ in } \mathscr C(B,B).\]
    Two objects in an $(\infty,n+1)$-category $A$ and $B$ are \emph{$(\infty,n)$-equivalent}, and we write $A\simeq_{(\infty,n)} B$, if there is an $(\infty,n)$-equivalence between them.
\end{defn}

We can give an immediate variant of the definition.

\begin{prop}
\label{twosideinverse}
Let $n\geq0$. A $1$-morphism $F\colon A\to B$ in an $(\infty,n+1)$-category $\mathscr C$ is an $(\infty,n)$-equivalence if
and only if there exist $1$-morphisms $G,G'\colon B\to A$ and $(\infty,n-1)$-equivalences
    \[\eta\colon\id_A\simeq_{(\infty,n-1)} G\circ F\text{ in } \mathscr C(A,A)\text{ and }\varepsilon'\colon F\circ G'\simeq_{(\infty,n-1)}\id_B\text{ in } \mathscr C(B,B).\]
\end{prop}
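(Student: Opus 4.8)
The forward implication is immediate: if $F$ is an $(\infty,n)$-equivalence, with the existence of an inverse witnessed by a $1$-morphism $G\colon B\to A$ and $(\infty,n-1)$-equivalences $\eta\colon\id_A\simeq_{(\infty,n-1)}G\circ F$ and $\varepsilon\colon F\circ G\simeq_{(\infty,n-1)}\id_B$, then $G'\coloneqq G$ and $\varepsilon'\coloneqq\varepsilon$ furnish the required data. So the content of \cref{twosideinverse} is the converse, and the plan is to mimic the classical argument behind \cref{IsoTwoInverses}: from a ``left inverse'' $G$ and a ``right inverse'' $G'$ of $F$ one first shows that $G$ and $G'$ are $(\infty,n-1)$-equivalent as objects of the hom-$(\infty,n)$-category $\mathscr C(B,A)$, and then transports $\varepsilon'$ along this equivalence to manufacture a genuine two-sided datum exhibiting $F$ as an $(\infty,n)$-equivalence.

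The one structural fact I would isolate first is that every $(\infty,m)$-functor preserves $(\infty,k)$-equivalences whenever $k\leq m$. This is proved by induction on $m$, the base case being that a map of spaces sends paths to paths, and the inductive step being that an $(\infty,m)$-functor restricts on hom-$(\infty,m-1)$-categories to $(\infty,m-1)$-functors, so that the witnessing data for an $(\infty,k)$-equivalence (an inverse together with $(\infty,k-1)$-equivalences as unit and counit) is carried to witnessing data of the same shape. Applying this to the composition $1$-simplices $\circ\colon\mathscr C(x,y)\times\mathscr C(y,z)\to\mathscr C(x,z)$ of the $\infty$-category $(\infty,n)\mathscr Cat$ --- which are in particular $(\infty,n)$-functors --- and using that an $(\infty,n-1)$-equivalence in a product $(\infty,n)$-category is a pair of $(\infty,n-1)$-equivalences, one obtains that whiskering an $(\infty,n-1)$-equivalence of $1$-morphisms in a hom-$(\infty,n)$-category by a fixed $1$-morphism is again an $(\infty,n-1)$-equivalence. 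As in the strict case, $\simeq_{(\infty,n-1)}$ is readily checked to be an equivalence relation, hence transitive.

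With these in hand, one runs the zig-zag of $(\infty,n-1)$-equivalences in $\mathscr C(B,A)$
\[
G'\;\simeq\;\id_A\circ G'\;\simeq\;(G\circ F)\circ G'\;\simeq\;G\circ(F\circ G')\;\simeq\;G\circ\id_B\;\simeq\;G,
\]
whose outer steps are unitality, whose middle step is associativity (all holding already as equalities in $(\infty,n)\mathscr Cat$ by \eqref{HoAssociativity} and its unital counterpart), and whose two remaining steps are the whiskerings of $\eta$ by $G'$ and of $\varepsilon'$ by $G$. By transitivity this yields an $(\infty,n-1)$-equivalence $G\simeq_{(\infty,n-1)}G'$ in $\mathscr C(B,A)$; whiskering it by $F$ and composing with $\varepsilon'$ produces an $(\infty,n-1)$-equivalence $\varepsilon\colon F\circ G\simeq_{(\infty,n-1)}\id_B$ in $\mathscr C(B,B)$, and then $G$ together with $\eta$ and $\varepsilon$ exhibits $F$ as an $(\infty,n)$-equivalence, completing the converse.

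I do not anticipate a real obstacle: the argument is a short induction, and its only subtlety is that composition in an $(\infty,n)$-category is encoded by $1$-simplices of $(\infty,n)\mathscr Cat$ rather than by honest functions, so the whiskerings above rely on the preservation statement of the second paragraph and the associativity and unitality steps rely on the coherence axioms \eqref{HoAssociativity} instead of strict equalities. The base case $n=0$ of the proposition is exactly the familiar fact that a $1$-morphism of an $\infty$-category admitting both a left and a right homotopy inverse is an equivalence, proved by the same zig-zag.
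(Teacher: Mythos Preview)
Your argument is correct and follows essentially the same route as the paper: both prove the converse by running the zig-zag $G'\simeq\id_A\circ G'\simeq(G\circ F)\circ G'\simeq G\circ(F\circ G')\simeq G\circ\id_B\simeq G$ in $\mathscr C(B,A)$ and then whiskering the resulting equivalence $\Phi\colon G'\simeq_{(\infty,n-1)}G$ by $F$ and composing with $\varepsilon'$ to obtain $\varepsilon$. The only difference is that you spell out why each step is legitimate (preservation of equivalences by $(\infty,m)$-functors, hence by whiskering; transitivity of $\simeq_{(\infty,n-1)}$), whereas the paper simply writes down the chain and the formula $\varepsilon\coloneqq\varepsilon'\circ(F\circ\Phi^{-1})$ without further comment.
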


\begin{proof}
The forward implication is straightforward, by taking $G'=G$ and $\varepsilon'=\varepsilon$. For the backwards implication, we observe that there is an $(\infty,n-1)$-equivalence
\begin{align*}\Phi\colon G'&\simeq_{(\infty,n-1)}\id_{A}\circ G'\simeq_{(\infty, n-1)}(G\circ F)\circ G'\\
&\simeq_{(\infty,n-1)}G\circ (F\circ G') \simeq_{(\infty,n-1)} G\circ \id_B \simeq_{(\infty,n-1)} G
\end{align*}
One can then use $\Phi$ and $\varepsilon'$ to produce an appropriate $\varepsilon$ as
\[\varepsilon\coloneqq\varepsilon'\circ(F\circ \Phi^{-1}),\]
so we are done.
\end{proof}

\subsubsection{Alternative viewpoints}

We discuss ways that the notion of equivalence in an $(\infty,n+1)$-category has to interact with the notion of equivalence in a strict $(n+1)$-category from \cref{equivalences of ncategories} and in an $(\infty,k+1)$-category for $k<n$.

Recall the functor $\mathrm{core}_k\colon(\infty,n)\mathscr Cat\to(\infty,k)\mathscr Cat$ from \cref{InclusionHigher}. The following characterization of $(\infty,n)$-equivalences in terms of lower dimensional cores can be seen as a variant of \cref{EquiTestTrunc} for $(\infty,n)$-categories. It is similar to the approach originally taken in \cite{Tamsamani,HirschowitzSimpson,Pellissier,SimpsonBook,PaoliBook} to define equivalences in a weak $n$-category.

\begin{prop}
\label{statementwithcore}
Let $n>0$.
Let $\mathscr C$ be an $(\infty,n+1)$-category and $A,B\in\Ob\mathscr C$. The following are equivalent.
\begin{enumerate}[leftmargin=*]
    \item There is an $(\infty,n)$-equivalence
    \[A\simeq_{(\infty,n)} B\text{ in the $(\infty,n+1)$-category }\mathscr C.\]
    \item 
    For some -- hence for all -- $0\leq k+1< n+1$ there is an
    $(\infty,k)$-equivalence
    \[A\simeq_{(\infty,k)} B\text{ in the $(\infty,k+1)$-category }\mathrm{core}_{k+1}\mathscr C.\]
    \end{enumerate}
\end{prop}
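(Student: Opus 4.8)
The plan is to prove the statement by induction on $n\ge 1$, reducing at each stage to the single instance $k=n-1$ of (2) and then iterating the core functor. The two key inputs are \cref{LemmaHom}, which compares the hom-$(\infty,n-1)$-categories of $\core_n\mathscr C$ and of $\mathscr C$, and the inductive hypothesis applied to the hom-$(\infty,n)$-categories $\mathscr C(A,A)$ and $\mathscr C(B,B)$; this is the $(\infty,n)$-analogue of the way \cref{EquiTestTrunc} was proved in the strict setting.

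First I would observe that it suffices to establish the single equivalence
\[
A\simeq_{(\infty,n)}B\text{ in }\mathscr C
\qquad\Longleftrightarrow\qquad
A\simeq_{(\infty,n-1)}B\text{ in }\core_n\mathscr C,
\]
that is, the case $k=n-1$ of (2). Indeed $\core_n\mathscr C$ is an $(\infty,n)$-category, and by the construction of the iterated core in \cref{InclusionHigher} (cf.\ \eqref{totalcore}) one has $\core_{k+1}\core_n\mathscr C\cong\core_{k+1}\mathscr C$ whenever $k+1\le n$; so applying the Proposition in dimension $n-1$ to the $(\infty,n)$-category $\core_n\mathscr C$ shows that the condition ``$A\simeq_{(\infty,n-1)}B$ in $\core_n\mathscr C$'' is in turn equivalent to ``$A\simeq_{(\infty,k)}B$ in $\core_{k+1}\mathscr C$'' for every $0\le k+1<n$. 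Combining this with the displayed equivalence yields that all the conditions in (2) for $0\le k+1<n+1$ are mutually equivalent and equivalent to (1), which is exactly the ``for some --- hence for all'' assertion.

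To prove the displayed equivalence I would unravel both sides. The left-hand side, by the definition of $(\infty,n)$-equivalence, requires $1$-morphisms $F\colon A\to B$ and $G\colon B\to A$ together with $(\infty,n-1)$-equivalences $\eta\colon\id_A\simeq_{(\infty,n-1)}G\circ F$ in $\mathscr C(A,A)$ and $\varepsilon\colon F\circ G\simeq_{(\infty,n-1)}\id_B$ in $\mathscr C(B,B)$. The right-hand side requires $1$-morphisms of $\core_n\mathscr C$ --- which, since $n\ge1$, are precisely the $1$-morphisms of $\mathscr C$ and compose as in $\mathscr C$, the core only discarding non-invertible $(n+1)$-cells --- together with $(\infty,n-2)$-equivalences $\id_A\simeq_{(\infty,n-2)}G\circ F$ in $(\core_n\mathscr C)(A,A)$ and $F\circ G\simeq_{(\infty,n-2)}\id_B$ in $(\core_n\mathscr C)(B,B)$. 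For fixed $F$ and $G$ I would then match the two: by \cref{LemmaHom} there is an equivalence of $(\infty,n-1)$-categories $(\core_n\mathscr C)(A,A)\simeq_{n-1}\core_{n-1}(\mathscr C(A,A))$ which is the identity on objects (both sides have object set the $1$-morphisms $A\to A$ of $\mathscr C$), hence preserves the existence of an $(\infty,n-2)$-equivalence between $\id_A$ and $G\circ F$; and the inductive hypothesis applied to the $(\infty,n)$-category $\mathscr C(A,A)$, in its instance $k=n-2$, translates ``$(\infty,n-2)$-equivalence between $\id_A$ and $G\circ F$ in $\core_{n-1}(\mathscr C(A,A))$'' into ``$(\infty,n-1)$-equivalence between $\id_A$ and $G\circ F$ in $\mathscr C(A,A)$''. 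The same reasoning applies verbatim to $\varepsilon$ in $\mathscr C(B,B)$, giving the displayed equivalence. For the base case $n=1$, the only relevant index is $k=0$ and the same argument works with the role of the inductive hypothesis played by the elementary fact that two objects of an $\infty$-category are $(\infty,0)$-equivalent iff they lie in the same path component of its maximal sub-$\infty$-groupoid; concretely a path between $\id_A$ and $G\circ F$ in $(\core_1\mathscr C)(A,A)\simeq\core_0(\mathscr C(A,A))$ (by \cref{LemmaHom}) is the same datum as an $(\infty,0)$-equivalence in the $\infty$-category $\mathscr C(A,A)$ --- this is the $(\infty,n)$-counterpart of \cref{FundCatEqui2cat}.

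The main obstacle I expect is the bookkeeping around \cref{LemmaHom}: one needs not merely that $(\core_n\mathscr C)(A,A)$ and $\core_{n-1}(\mathscr C(A,A))$ are equivalent, but that the comparison equivalence sends $\id_A$ to $\id_A$ and $G\circ F$ to $G\circ F$, so that existence of an $(\infty,n-2)$-equivalence between these specific objects transfers; this requires inspecting the mate/adjunction construction underlying the proof of \cref{LemmaHom} in a chosen model (e.g.\ saturated $n$-complicial sets), together with the accompanying model-level verification that $\core_n$ leaves all cells of dimension $\le n$ and their composites untouched. Everything else is a routine unwinding of the inductive definitions.
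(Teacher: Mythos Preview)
Your proposal is correct and follows essentially the same approach as the paper: induction on $n$, reduction to the case $k=n-1$, unwinding both sides in terms of the existence of $F,G$ and $(\infty,n-1)$-equivalences in the hom-$(\infty,n)$-categories, then invoking \cref{LemmaHom} together with the inductive hypothesis on the homs; the paper's base case is handled by citing \cite[Proposition~1.14]{JoyalVolumeII}, which is exactly the elementary fact you describe. Your explicit flagging of the need for the equivalence of \cref{LemmaHom} to match up the specific objects $\id_A$ and $G\circ F$ is a point the paper leaves implicit, so you are if anything slightly more careful.
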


We can now prove the proposition.

\begin{proof}
We prove the statement by induction on $n>0$ further assuming $k=n-1$ for simplicity of exposition. The other values of $k$ can be treated similarly with a further induction on $k$.
The base of the induction, the case $n+1=1$, $k+1=0$, can be shown using \cite[Proposition 1.14]{JoyalVolumeII}, and we now assume $n>1$. Let $A,B\in\Ob\mathscr C$.

Having an $(\infty,n)$-equivalence
\[A\simeq_{(\infty,n)}B\text{ in the $(\infty,n+1)$-category }\mathscr C\]
is equivalent to the existence of $F\in\Ob\mathscr C(A,B)$ and $G\in\Ob\mathscr C(B,A)$ so that there are $(\infty,n-1)$-equivalences
\[G\circ F\simeq_{(\infty,n-1)}\id_A\text{ in the $(\infty,n)$-category }\mathscr C(A,A)\quad\text{ and}\]
\[F\circ G\simeq_{(\infty,n-1)}\id_{B}\text{ in the $(\infty,n)$-category }\mathscr C(B,B).\]
This is in turn equivalent -- by induction hypothesis and \cref{LemmaHom}-- to the existence of $(\infty,n-2)$-equivalences
\[G\circ F\simeq_{n-2}\id_A\text{ in the $(\infty,n-1)$-category }\core_{n-1}(\mathscr C(A,A))\simeq_{n-1}(\core_{n}\mathscr C)(A,A)\]
\[\text{and }F\circ G\simeq_{n-2}\id_{B}\text{ in the $(\infty, n-1)$-category }\core_{n-1}(\mathscr C(B,B))\simeq_{n-1}(\core_{n}\mathscr C)(B,B).\]
Finally, this is equivalent to the existence of an $(\infty,n-1)$-equivalence
\[A\simeq_{(\infty,n-1)}B\text{ in the $(\infty,n)$-category }\core_{n}\mathscr C.\]
This concludes the proof.
\end{proof}

Recall the functor $\Pi_{n}\colon(\infty,n)\mathscr Cat\to n\mathscr Cat$ from \cref{nerveinclusion}. We discuss -- at least for low values of $n$ -- a characterization of $(\infty,n)$-equivalences in terms of $n$-equivalences after applying $\Pi_n$.

\begin{prop}
\label{statementwithPi}
Let $n+1=0,1,2$. Let $\mathscr C$ be an $(\infty,n+1)$-category, and $A,B\in\Ob\mathscr C$. The following are equivalent.
\begin{enumerate}[leftmargin=*]
    \item There is an $(\infty,n)$-equivalence
    \[A\simeq_{(\infty,n)} B\text{ in the $(\infty,n+1)$-category }\mathscr C.\]
    \item There is an $n$-equivalence
    \[A\simeq_{n} B\text{ in the fundamental $(n+1)$-category }\Pi_{n+1}\mathscr C.\]
    \end{enumerate}
\end{prop}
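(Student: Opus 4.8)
The plan is to establish the three cases $n+1=0,1,2$ separately, using the earlier cases in the later ones, and organizing the bookkeeping exactly as in the proofs of \cref{EquiTestTrunc} and \cref{statementwithcore}. The structural input I would isolate at the outset is the hom-wise description of the weak fundamental $(n+1)$-category: for $n+1\leq 2$ there is an $n$-equivalence of $n$-categories
\[(\Pi_{n+1}\mathscr C)(A,B)\simeq_{n}\Pi_{n}(\mathscr C(A,B))\]
compatible with the composition and identity $n$-functors, i.e.\ the $(\infty,n)$-analogue of \eqref{HomofPi}. I would read this off the explicit constructions realizing the adjunction \eqref{AdjPi} in low dimensions --- naturally marked quasi-categories or complete Segal spaces for $n=1$, and saturated $2$-complicial sets \cite{Nerves2Cat}, $2$-fold complete Segal spaces, or categories enriched over $(\infty,1)$-categories \cite{MORNerves,GHL} for $n=2$ --- all of which are built hom-wise, together with the elementary fact that $\Pi_0=\pi_0$ and $\Pi_1=\Ho$ preserve finite products, so that they send the composition $(\infty,n)$-functors of $\mathscr C$ to the composition $n$-functors of $\Pi_{n+1}\mathscr C$.

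The case $n+1=0$ is then immediate: by convention $\simeq_{(\infty,-1)}$ inside a space is ``joined by a path'', $\simeq_{-1}$ is equality, and $\Pi_0\mathscr C=\pi_0\mathscr C$, so $A\simeq_{(\infty,-1)}B$ in $\mathscr C$ literally says $[A]=[B]$ in $\pi_0\mathscr C$. For $n+1=1$, I would unwind the definition of $(\infty,0)$-equivalence together with \cref{twosideinverse}: it asks for $F\colon A\to B$ and $G,G'\colon B\to A$ whose relevant composites become the identities in the homotopy category $\Ho\mathscr C=\Pi_1\mathscr C$, i.e.\ that the class of $F$ has a left and a right inverse there; by \cref{IsoTwoInverses} applied in $\Pi_1\mathscr C$, or alternatively by the classical characterization of equivalences in an $\infty$-category \cite[Proposition~1.14]{JoyalVolumeII}, this is exactly $A\simeq_0 B$ in $\Pi_1\mathscr C$.

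For $n+1=2$, the plan is a direct unwinding feeding the case $n+1=1$ into the hom-$\infty$-categories. By definition $A\simeq_{(\infty,1)}B$ in $\mathscr C$ means there are $1$-morphisms $F\colon A\to B$, $G\colon B\to A$ and $(\infty,0)$-equivalences $\id_A\simeq_{(\infty,0)}G\circ F$ in the $\infty$-category $\mathscr C(A,A)$ and $F\circ G\simeq_{(\infty,0)}\id_B$ in $\mathscr C(B,B)$. Applying the case $n+1=1$ to $\mathscr C(A,A)$ and $\mathscr C(B,B)$, this becomes: the images of $G\circ F$ and $F\circ G$ are isomorphic to $\id_A$, resp.\ $\id_B$, in $\Pi_1(\mathscr C(A,A))$, resp.\ $\Pi_1(\mathscr C(B,B))$. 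Transporting along the hom-wise equivalences $\Pi_1(\mathscr C(A,A))\simeq_1(\Pi_2\mathscr C)(A,A)$ and $\Pi_1(\mathscr C(B,B))\simeq_1(\Pi_2\mathscr C)(B,B)$ --- and using product-preservation of $\Pi_1$ to identify the image of $G\circ F$ with $\bar G\circ\bar F$ --- the condition reads: there are $1$-morphisms $\bar F,\bar G$ of the $2$-category $\Pi_2\mathscr C$ with $\id_A\cong\bar G\circ\bar F$ and $\bar F\circ\bar G\cong\id_B$ in the relevant hom-categories of $\Pi_2\mathscr C$. By \cref{defnequicat} for $n=1$ (which is \cref{defequiin2cat}) this is precisely a $1$-equivalence $A\simeq_1 B$ in $\Pi_2\mathscr C$, completing this case.

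The main obstacle I expect is entirely concentrated in the hom-wise input isolated at the start --- both the $n$-equivalence $(\Pi_{n+1}\mathscr C)(A,B)\simeq_{n}\Pi_{n}(\mathscr C(A,B))$ and its compatibility with composition --- since everything else is formal unwinding of the recursive definitions; this is also precisely where the hypothesis $n+1\leq 2$ is used, because for larger $n$ a categorification functor $\Pi_{n+1}$ with these properties is not yet available (cf.\ the discussion around \eqref{AdjPi} and \cite[Definition~4.46, Theorem 4.50]{HenryLoubaton}). An alternative route worth keeping in reserve is to combine \cref{statementwithcore} (with $k+1=1$) with \cref{EquiTestTrunc} (with $k=0$), which would reduce the claim to identifying $\Pi_1(\mathrm{core}_1\mathscr C)$ with the total fundamental category $\Pi_1(\Pi_{n+1}\mathscr C)$; this trades the hom-wise formula for a compatibility of the fundamental-category functor with the core functors, and appears no less delicate.
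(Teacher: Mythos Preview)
Your proposal is correct, and in fact more detailed than what the paper does: the paper does not give a proof in the usual sense but only records references, namely that $n+1=0$ is by definition of $\Pi_0$, that $n+1=1$ is \cite[\textsection 1.10]{joyalnotes}, and that $n+1=2$ is \cite[Theorem~1.4.7]{RiehlVerityBook}. Your approach matches the paper for $n+1=0,1$ (your alternative citation of \cite[Proposition~1.14]{JoyalVolumeII} is essentially the same source), but for $n+1=2$ you take a genuinely different route: rather than invoking the Riehl--Verity result as a black box, you run the same inductive scheme as in \cref{EquiTestTrunc} and \cref{statementwithcore}, using the hom-wise description $(\Pi_{2}\mathscr C)(A,B)\simeq_{1}\Pi_{1}(\mathscr C(A,B))$ and product-preservation of $\Pi_1$ to reduce to the case $n+1=1$ in the hom-$\infty$-categories. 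This is more self-contained and makes transparent exactly where the restriction $n+1\le 2$ enters (namely in having an explicit hom-wise model for $\Pi_{n+1}$), whereas the paper's citation-based treatment hides this. The trade-off is that your argument still depends on verifying that hom-wise formula in at least one model, which is precisely what you flag as the main obstacle; the cleanest way to discharge it is to work in the Bergner--Lurie enriched model, where $\Pi_2$ is literally base-change along $\Pi_1$ and the formula holds on the nose.
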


The case $n+1=0$ is essentially by definition of $\Pi_0$, the case $n+1=1$ is essentially done in \cite[\textsection 1.10]{joyalnotes}, and the case $n+1=2$ is addressed in \cite[Theorem 1.4.7]{RiehlVerityBook}. The statement possibly also holds for higher $n$, upon correct identification of the functor $\Pi_n$.

We follow the convention that
an \emph{$(\infty,0)$-equivalence between $(\infty,0)$-categories} is a homotopy equivalence. Let now $n>0$. Assuming to know 
what is an $(\infty,n-1)$-equivalence between $(\infty,n-1)$-categories,
we now define inductively for $n\geq0$
an $(\infty,n)$-equivalence between $(\infty,n)$-categories.

\begin{defn}[$(\infty,n)$-equivalence of $(\infty,n)$-categories]
\label{defeqinfty}
Let $n>0$. An $(\infty,n)$-functor $F\colon \mathscr A\to \mathscr B$ between $(\infty,n)$-categories is an $(\infty,n)$-equivalence if and only if
\begin{enumerate}[leftmargin=*]
    \item the $(\infty,n)$-functor $F$ is \emph{surjective on objects up to equivalence}, meaning that for every object
    $b\in\Ob\mathscr B$ there exists an object $a\in\Ob\mathscr A$ and an $(\infty,n-1)$-equivalence
    \[b\simeq_{(\infty,n-1)} Fa\text{ in the $(\infty,n)$-category }\mathscr B.\]
    \item the $(\infty,n)$-functor $F$ is a \emph{hom-wise $(\infty,n-1)$-equivalence}, meaning that for all objects $a,a'\in\Ob \mathscr A$ the morphism $F$ induces $(\infty,n-1)$-equivalences
    \[F_{a,a'}\colon \mathscr A(a,a')\simeq_{(\infty,n-1)}\mathscr B(Fa,Fa')\text{ of $(\infty,n-1)$-categories}.\]
\end{enumerate}
\end{defn}

We can prove that that every $(\infty,n)$-equivalence admits an inverse in a suitable sense.

\begin{prop}[$(\infty,n)$-equivalence of $(\infty,n)$-categories]
Let $n>0$. 
An $(\infty,n)$-functor $F\colon \mathscr A\to \mathscr B$ between $(\infty,n)$-categories is an $(\infty,n)$-equivalence if
and only if there exists an $(\infty,n)$-functor $G\colon \mathscr B\to \mathscr A$ and $(\infty,n-1)$-equivalences
    \[\eta\colon\id_\mathscr A\simeq_{(\infty,n-1)} G\circ F\text{ in the $(\infty,n)$-category } (\infty,n)\mathscr Cat(\mathscr A,\mathscr A)\]
    \[\text{ and }\varepsilon\colon F\circ G\simeq_{(\infty,n-1)}\id_\mathscr B\text{ in the $(\infty,n)$-category } (\infty,n)\mathscr Cat(\mathscr B,\mathscr B).\]
\end{prop}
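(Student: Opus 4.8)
The plan is to read the statement as the identification of the $(\infty,n)$-equivalences of $(\infty,n)$-categories of \cref{defeqinfty} with the $(\infty,n)$-equivalences \emph{in} the cartesian closed $(\infty,n+1)$-category $(\infty,n)\mathscr Cat$, and to argue the two implications by genuinely different means: the direction ``inverse $\Rightarrow$ equivalence'' is essentially formal, while ``equivalence $\Rightarrow$ inverse'' is where the weakness of the setting is essential.

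For the direction starting from $G$, $\eta\colon\id_{\mathscr A}\simeq_{(\infty,n-1)}G\circ F$ and $\varepsilon\colon F\circ G\simeq_{(\infty,n-1)}\id_{\mathscr B}$, I would first evaluate $\varepsilon$ at each object $b\in\Ob\mathscr B$: since evaluation at an object is an $(\infty,n)$-functor and hence preserves $(\infty,n-1)$-equivalences, this gives $FGb\simeq_{(\infty,n-1)}b$ in $\mathscr B$, so $F$ is surjective on objects up to equivalence. For the hom-wise condition at a pair $a,a'\in\Ob\mathscr A$, I would use that $\eta$ is a (hence natural) equivalence of $(\infty,n)$-functors, so that composing with its $(\infty,n-1)$-equivalence components induces $(\infty,n-1)$-equivalences of hom-$(\infty,n-1)$-categories and, by $2$-out-of-$3$, that $G_{Fa,Fa'}\circ F_{a,a'}=(G\circ F)_{a,a'}$ is an $(\infty,n-1)$-equivalence; applying the same reasoning to $\varepsilon$ at $Fa,Fa'$ shows $F_{GFa,GFa'}\circ G_{Fa,Fa'}$ is one too. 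Since $(\infty,n-1)$-equivalences of $(\infty,n-1)$-categories satisfy the $2$-out-of-$6$ property --- they are the weak equivalences of a model structure, or one may reduce via \cref{statementwithcore} to equivalences in an $(\infty,1)$-category --- it follows that $F_{a,a'}$ is an $(\infty,n-1)$-equivalence, so $F$ is an $(\infty,n)$-equivalence.

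For the converse --- which, unlike the strict case (cf.~\cref{ExampleLack}), genuinely holds --- I would pass to a model. Let $\mathcal M$ be a cartesian model category presenting $(\infty,n)\mathscr Cat$ whose weak equivalences are exactly the maps that are hom-wise $(\infty,n-1)$-equivalences and surjective on objects up to equivalence; such a Dwyer--Kan type description is available, e.g., for saturated $n$-complicial sets or for complete Segal $\Theta_n$-spaces. Passing to bifibrant models, $F$ is then a weak equivalence between bifibrant objects, hence a homotopy equivalence: there is $G\colon\mathscr B\to\mathscr A$ and homotopies $H\colon\mathscr A\times\mathbb I\to\mathscr A$ and $K\colon\mathscr B\times\mathbb I\to\mathscr B$ from $G\circ F$ and $F\circ G$ to the respective identities, where $\mathbb I$ is a cylinder on the terminal object. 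Transposing $H$ across the cartesian closure produces a functor $\mathbb I\to(\infty,n)\mathscr Cat(\mathscr A,\mathscr A)$ carrying the endpoints of $\mathbb I$ to $\id_{\mathscr A}$ and $G\circ F$; as $\mathbb I$ presents a contractible $(\infty,n)$-category its ``interval edge'' is an $(\infty,n-1)$-equivalence, and $(\infty,n)$-functors preserve $(\infty,n-1)$-equivalences, so the image of that edge is the desired $\eta\colon\id_{\mathscr A}\simeq_{(\infty,n-1)}G\circ F$; and $\varepsilon$ is obtained from $K$ in the same way.

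I expect the main obstacle to be the model-categorical input feeding the converse: the identification, in the chosen model, of the weak equivalences with the hom-wise-plus-essentially-surjective maps of \cref{defeqinfty}. This is a real theorem, proved separately in each model, rather than a formality. A secondary point requiring care is the transposition step, which needs $\mathcal M$ to be cartesian with a sufficiently well-behaved interval so that $\mathscr A\times\mathbb I$ is a genuine cylinder object and the internal hom computes the functor $(\infty,n)$-category $(\infty,n)\mathscr Cat(\mathscr A,\mathscr A)$. A more hands-on alternative --- build $G$ on objects via essential surjectivity and on homs via the inductive hypothesis applied to each $F_{a,a'}$ --- immediately runs into exactly the coherence bookkeeping that the model-categorical homotopy inverse handles automatically, so I would not pursue it.
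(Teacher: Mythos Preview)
Your proposal is correct, and for the harder direction (equivalence $\Rightarrow$ inverse) it follows essentially the same route as the paper: pass to a model, use that weak equivalences between bifibrant objects are homotopy equivalences, and translate the homotopies into equivalences in the internal hom. The paper works specifically in saturated $n$-complicial sets, cites Loubaton for the Dwyer--Kan characterization of weak equivalences you flag as the main external input, and in the final step uses the simplicial enrichment to land in the mapping space $\mathrm{sp}_0(\infty,n)\mathscr Cat(\mathscr A,\mathscr A)$ before invoking \cref{statementwithcore} to upgrade an $(\infty,-1)$-equivalence there to an $(\infty,n-1)$-equivalence in the full hom; your cartesian transposition accomplishes the same thing with the interval edge playing the role of that upgrade.

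Where you genuinely diverge is the easy direction (inverse $\Rightarrow$ equivalence): the paper does not argue it separately but folds it into the same chain of bi-implications through the model category, so it still relies on the Dwyer--Kan characterization. Your direct argument via evaluation, naturality, and $2$-out-of-$6$ is more elementary and avoids the model-categorical input for that implication entirely. The trade-off is that the paper's uniform chain is shorter to write, while your split treatment isolates exactly where the nontrivial input is needed.
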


The content of this proposition is essentially \cite[\textsection 5.5, 5.6]{GH}. Precisely, the backwards implication is \cite[Proposition 5.5.3]{GH}, and the forwards implication is roughly discussed in \cite[Remark 5.6.5]{GH}; a further closely related discussion appears in \cite[Corollary 5.6.3]{GH}. We give an alternative proof.

\begin{proof}
Without loss of generality, one can represent $F\colon\mathscr A\to\mathscr B$ as a map between fibrant objects in the Verity model structure $\msset_{(\infty,n)}$ for saturated $n$-complicial sets on marked simplicial sets from \cite[Theorem 1.25]{or}.
By \cref{defeqinfty}, saying that $F$ is an $(\infty,n)$-equivalence amounts to
being a hom-wise equivalence of $(\infty,n-1)$-categories and essentially surjective up to $(\infty,n-1)$-equivalence.
By \cite[Corollary 3.2.11]{Loubaton2} this is equivalent to saying that $F$ is a weak equivalence 
in $\msset_{(\infty,n)}$. 
Using \cite[Proposition 1.2.8]{hovey}, this is equivalent to saying that the map $F\colon\mathscr A\to\mathscr B$ is a homotopy equivalence in $\msset_{(\infty,n)}$, meaning that there exist a map $G\colon\mathscr B\to\mathscr A$ and homotopies in $\msset_{(\infty,n)}$
\[G\circ F\sim_{\msset_{(\infty,n)}}\id_\mathscr A\text{ and }F\circ G\sim_{\msset_{(\infty,n)}}\id_{\mathscr B}.\]
By definition of $(\infty,-1)$-equivalence, this is equivalent to saying that there are $(\infty,-1)$-equivalences
\[\eta\colon\id_\mathscr A\simeq_{(\infty,-1)} G\circ F\text{ in the $(\infty,0)$-category } \Map^{h}_{(\infty,n)\mathscr Cat}(\mathscr A,\mathscr A)\]
    \[\text{ and }\varepsilon\colon F\circ G\simeq_{(\infty,-1)}\id_\mathscr B\text{ in the $(\infty,0)$-category } \Map^{h}_{(\infty,n)\mathscr Cat}(\mathscr B,\mathscr B).\]
One can show, by direct verification, that the model structure $\msset_{(\infty,n)}$
for complicial sets is simplicial with mapping spaces given by
\[\Map_{(\infty,n)\mathscr Cat}(\mathscr A,\mathscr B)\cong\mathrm{sp}_0(\infty,n)\mathscr Cat(\mathscr A,\mathscr B).\]
So the previous statement is equivalent to saying that there are $(\infty,-1)$-equivalences
\[\eta\colon\id_\mathscr A\simeq_{(\infty,-1)} G\circ F\text{ in the $(\infty,0)$-category } (\mathrm{sp}_0((\infty,n)\mathscr Cat))(\mathscr A,\mathscr A).\]
    \[\text{ and }\varepsilon\colon F\circ G\simeq_{(\infty,-1)}\id_\mathscr B\text{ in the $(\infty,0)$-category } (\mathrm{sp}_0((\infty,n)\mathscr Cat))(\mathscr B,\mathscr B)\]
    By \cref{statementwithcore},
this is equivalent to saying that there are $(\infty,n-1)$-equivalences
    \[\eta\colon\id_\mathscr A\simeq_{(\infty,n-1)} G\circ F\text{ in the $(\infty,n)$-category } ((\infty,n)\mathscr Cat)(\mathscr A,\mathscr A)\]
    \[\text{ and }\varepsilon\colon F\circ G\simeq_{(\infty,n-1)}\id_\mathscr B\text{ in the $(\infty,n)$-category } ((\infty,n)\mathscr Cat)(\mathscr B,\mathscr B)\]
    as desired.
\end{proof}

\subsubsection{Walking equivalence}
We can determine an indexing shape parametrizing $(\infty,n)$-equivalences in an $(\infty,n+1)$-category.
To this end, let $\mathscr C_1$ and $\mathscr I$
denote the $(\infty,1)$-categories obtained by regarding the categories $\cC_1$ and $\cI$ as $(\infty,1)$-categories via the inclusion of $\infty$-categories $1\mathscr Cat\hookrightarrow(\infty,1)\mathscr Cat$ from \cref{nerveinclusion}.
They will also be regarded as $(\infty,n)$-categories for $n\geq1$ via the inclusion of $\infty$-categories $1\mathscr Cat\hookrightarrow(\infty,1)\mathscr Cat\hookrightarrow\dots\hookrightarrow (\infty,n)\mathscr Cat$. It is evident that
$\mathscr C_1$ classifies $1$-morphisms in an $(\infty,n+1)$-category, and we will now show that $\mathscr I$ detects $(\infty,n)$-equivalences in an $(\infty,n+1)$-category in a suitable sense.

\begin{prop}
\label{propeqinfty1}
A $1$-morphism $F\colon A\to B$ in an $(\infty,1)$-category $\mathscr C$ is an \emph{$(\infty,0)$-equivalence} if and only if there is a solution to the lifting problem in the $\infty$-category $(\infty,1)\mathscr Cat$
\[\begin{tikzcd}
\mathscr C_1\arrow[r,"F"]\arrow[d,  "f" swap,hook ]&\mathscr C\\
\mathscr I\arrow[ru,dashed, "\widetilde F" swap]
\end{tikzcd}\]
\end{prop}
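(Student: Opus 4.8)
The plan is to reduce the statement about $(\infty,1)$-categories to the corresponding fact about quasi-categories, where it is well-known. First I would replace the abstract $\infty$-category $(\infty,1)\mathscr Cat$ by a concrete model, say the Joyal model structure on simplicial sets, so that $\mathscr C$ is represented by a quasi-category and $\mathscr I$ by the (nerve of the) walking-isomorphism category $\cI$, which is itself a quasi-category. Under this translation, a solution to the lifting problem in $(\infty,1)\mathscr Cat$ corresponds — up to homotopy — to an actual extension of the map $F\colon\mathscr C_1=\Delta[1]\to\mathscr C$ along the cofibration $\mathscr C_1\hookrightarrow\mathscr I$, because $\mathscr C_1\hookrightarrow\mathscr I$ can be taken to be a cofibration between cofibrant objects and $\mathscr C$ is fibrant; so the derived mapping space $\Map(\mathscr I,\mathscr C)\to\Map(\mathscr C_1,\mathscr C)$ has the $1$-morphism $F$ in its image if and only if a strict lift exists.

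Next I would invoke the classical characterization of isomorphisms in a quasi-category: by \cite[Proposition 1.14]{JoyalVolumeII} (the result already cited in the proof of \cref{statementwithcore} for the base case $n+1=1$), a $1$-simplex $F$ in a quasi-category $\mathscr C$ is an equivalence — i.e.\ an $(\infty,0)$-equivalence, which by definition means it admits a one-sided inverse on each side with the appropriate $(\infty,-1)$-equivalences (paths) witnessing $G\circ F\sim\id_A$ and $F\circ G\sim\id_B$ — precisely when the corresponding functor $\Delta[1]\to\mathscr C$ extends along $\Delta[1]\hookrightarrow N\cI$. This is exactly \cref{Idetects} read at the level of quasi-categories rather than strict categories, and it is the content that makes $\mathscr I$ the ``walking equivalence.'' I would spell out that the data $(G,G',\eta,\varepsilon')$ appearing in \cref{twosideinverse} for $n=0$ is exactly the data classified by $N\cI$, since $\cI$ has on each hom the single generating morphism together with both composites forced to be identities.

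Finally I would assemble these two reductions: an $(\infty,0)$-equivalence $F$ in $\mathscr C$ gives, via \cref{twosideinverse} with $n=0$, the inverse data, hence by Joyal's criterion an extension $\Delta[1]\hookrightarrow N\cI\to\mathscr C$; and such an extension, transported back through the model-categorical translation, is a solution of the stated lifting problem in $(\infty,1)\mathscr Cat$. Conversely, any solution of the lifting problem produces (after choosing a model) an extension along $\Delta[1]\hookrightarrow N\cI$, which by the same criterion forces $F$ to be an equivalence. The main obstacle I expect is purely bookkeeping: checking that ``solution to the lifting problem in the $\infty$-category $(\infty,1)\mathscr Cat$'' — a statement about the derived mapping space $\Map_{(\infty,1)\mathscr Cat}(\mathscr I,\mathscr C)\to\Map_{(\infty,1)\mathscr Cat}(\mathscr C_1,\mathscr C)$ hitting the point $F$ — is genuinely equivalent to the existence of a strict lift in a chosen model, so that Joyal's theorem applies verbatim; this is standard (cofibrant source, fibrant target, cofibration $\mathscr C_1\hookrightarrow\mathscr I$), but it is the step where one must be careful not to conflate the homotopical and strict notions of lifting.
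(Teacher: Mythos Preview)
Your proposal is correct and follows essentially the same approach as the paper: represent $\mathscr C$ as a quasi-category, invoke the classical fact that a $1$-simplex in a quasi-category is an equivalence iff it extends along $\Delta[1]\hookrightarrow N\cI$, and then translate between the strict lifting problem and the lifting problem in the $\infty$-category $(\infty,1)\mathscr Cat$ using that $\Delta[1]\hookrightarrow N\cI$ is a cofibration between cofibrant objects with fibrant target. The only minor discrepancy is bibliographic: the paper cites \cite[Proposition~2.2]{DuggerSpivakMapping} for the quasi-categorical characterization rather than \cite[Proposition~1.14]{JoyalVolumeII} (the latter, as used in \cref{statementwithcore}, concerns detection via the core rather than extension along $N\cI$), and the paper packages the model-categorical translation by pointing to the argument of \cref{MClemma} rather than spelling it out --- but the substance is the same.
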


To clarify the meaning of the proposition, we intend that there exists an $(\infty,1)$-functor $\widetilde F\colon\mathscr I\to\mathscr C$ and an $(\infty,-1)$-equivalence
\[\widetilde F\circ f\simeq_{(\infty,-1)}F\text{ in the $(\infty,0)$-category }(\infty,1)\mathscr Cat(\mathscr C_1,\mathscr C).\]

\begin{proof}
Without loss of generality, one can represent $\mathscr C$ as a quasi-category, and the map $F$ as a $1$-simplex $F\colon\Delta[1]\to\mathscr C$. Using \cite[Proposition~2.2]{DuggerSpivakMapping} (and the model of right hom space in a quasi-category from \cite[\textsection 1.2.2]{htt}), one can show that saying that $F$ is an $(\infty,0)$-equivalence is equivalent to saying that there is a solution to the lifting problem in the category $\sset$
\[\begin{tikzcd}
\Delta[1]\arrow[r,"F"]\arrow[d,  "f" swap,hook ]&\mathscr C\\
N\cI\arrow[ru,dashed, "\widetilde F" swap]
\end{tikzcd}\]
Using an argument similar to the one in the proof of \cref{MClemma}, one can further see that this is equivalent to saying that there exists a solution to the lifting problem
\[\begin{tikzcd}
\mathscr C_1\arrow[r,"F"]\arrow[d,  "f" swap,hook ]&\mathscr C\\
\mathscr I\arrow[ru,dashed, "\widetilde F" swap]
\end{tikzcd}\]
in the $\infty$-category $(\infty,1)\mathscr Cat$, as desired.
\end{proof}

We can now use the previous proposition about $(\infty,0)$-categories in an $(\infty,1)$-categories to prove the analog characterization for $(\infty,n)$-equivalence in an $(\infty,n+1)$-category.

\begin{prop}
\label{propeqinfty}
Let $n\geq0$. A $1$-morphism $F\colon A\to B$ in an $(\infty,n+1)$-category $\mathscr C$ is an \emph{$(\infty,n)$-equivalence} if and only if there is a solution to the lifting problem in the $\infty$-category $(\infty,n+1)\mathscr Cat$
\[\begin{tikzcd}
\mathscr C_1\arrow[r,"F"]\arrow[d,  "f" swap,hook ]&\mathscr C\\
\mathscr I\arrow[ru,dashed, "\widetilde F" swap]
\end{tikzcd}\]
\end{prop}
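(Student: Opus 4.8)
The plan is to proceed by induction on $n\geq0$. The base case $n=0$ is exactly \cref{propeqinfty1}. So fix $n\geq1$ and assume the statement for $n-1$: namely, a $1$-morphism in an $(\infty,n)$-category is an $(\infty,n-1)$-equivalence if and only if it extends along the inclusion $\mathscr C_1\hookrightarrow\mathscr I$ in the $\infty$-category $(\infty,n)\mathscr Cat$.

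The first step is a reduction to the core. Recall the adjunction $(\infty,n)\mathscr Cat\rightleftarrows(\infty,n+1)\mathscr Cat\colon\core_n$ of \cref{InclusionHigher}; the left adjoint $\iota$ is a fully faithful inclusion, so its unit is an equivalence, and by \cref{LemmaHom} the functor $\core_n$ acts on hom-objects as $\core_{n-1}$. I would first verify that a $1$-morphism $F\colon A\to B$ of $\mathscr C$ is an $(\infty,n)$-equivalence in $\mathscr C$ if and only if its image $\overline F\colon A\to B$ in the $(\infty,n)$-category $\core_n\mathscr C$ — literally the same $1$-morphism, since $\core_n$ leaves all cells of dimension $\leq n$ (in particular dimension $\leq 1$) untouched — is an $(\infty,n-1)$-equivalence there. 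This comes from unwinding the definition of an $(\infty,n)$-equivalence of a morphism: $F$ is witnessed by a $1$-morphism $G\colon B\to A$ together with $(\infty,n-1)$-equivalences $\eta\colon\id_A\simeq_{(\infty,n-1)}G\circ F$ in $\mathscr C(A,A)$ and $\varepsilon\colon F\circ G\simeq_{(\infty,n-1)}\id_B$ in $\mathscr C(B,B)$; by \cref{statementwithcore} applied to the hom-$(\infty,n)$-categories $\mathscr C(A,A)$ and $\mathscr C(B,B)$, together with the identifications $\core_{n-1}(\mathscr C(A,A))\simeq_{(\infty,n-1)}(\core_n\mathscr C)(A,A)$ and $\core_{n-1}(\mathscr C(B,B))\simeq_{(\infty,n-1)}(\core_n\mathscr C)(B,B)$ of \cref{LemmaHom}, the existence of such $\eta$ and $\varepsilon$ is equivalent to the existence of $(\infty,n-2)$-equivalences between the corresponding cells inside $(\core_n\mathscr C)(A,A)$ and $(\core_n\mathscr C)(B,B)$; and since a $1$-morphism of $\core_n\mathscr C$ is the same datum as a $1$-morphism of $\mathscr C$, this is precisely the statement that $\overline F$ is an $(\infty,n-1)$-equivalence in $\core_n\mathscr C$. (When $n=1$ one invokes instead the base case $n+1=1$ of \cref{statementwithcore}.)

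The second step transports the lifting problem along $\iota\dashv\core_n$. The $(\infty,1)$-categories $\mathscr C_1$ and $\mathscr I$, regarded as $(\infty,n+1)$-categories, lie in the essential image of $\iota$ — indeed already in that of $1\mathscr Cat\hookrightarrow(\infty,n)\mathscr Cat$ — so the adjunction gives a commuting square of mapping $\infty$-groupoids identifying the restriction map $(\infty,n+1)\mathscr Cat(\mathscr I,\mathscr C)\to(\infty,n+1)\mathscr Cat(\mathscr C_1,\mathscr C)$ with $(\infty,n)\mathscr Cat(\mathscr I,\core_n\mathscr C)\to(\infty,n)\mathscr Cat(\mathscr C_1,\core_n\mathscr C)$, carrying $F$ to $\overline F$. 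Hence the lifting problem of $F$ against $\mathscr C_1\hookrightarrow\mathscr I$ in $(\infty,n+1)\mathscr Cat$ has a solution if and only if the lifting problem of $\overline F$ against $\mathscr C_1\hookrightarrow\mathscr I$ in $(\infty,n)\mathscr Cat$ has one. By the inductive hypothesis applied to the $(\infty,n)$-category $\core_n\mathscr C$, the latter holds if and only if $\overline F$ is an $(\infty,n-1)$-equivalence in $\core_n\mathscr C$. Combining this with the first step finishes the induction.

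The point I expect to require the most care is the homotopy-coherent bookkeeping: ``solution to the lifting problem'' means there is a functor $\widetilde F\colon\mathscr I\to\mathscr C$ together with an $(\infty,-1)$-equivalence $\widetilde F\circ f\simeq_{(\infty,-1)}F$ in the relevant mapping $\infty$-groupoid, so the comparison between the two lifting problems must genuinely be realized by the square of mapping $\infty$-groupoids above, and one must confirm that the adjunct of $F$ along $\iota\dashv\core_n$ is indeed the evident image of $F$ in $\core_n\mathscr C$ (using that the unit $\mathscr C_1\to\core_n\iota\mathscr C_1$ is an equivalence). A secondary subtlety, already flagged above, is that \cref{statementwithcore} is phrased for the existence of an equivalence between two objects rather than for a prescribed morphism being one; the passage to the pointed statement in the first step relies only on $\core_n$ being the identity on cells of dimension $\leq n$, so that the witnessing data $(G,\eta,\varepsilon)$ transfer verbatim.
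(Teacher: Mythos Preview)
Your proof is correct and follows essentially the same strategy as the paper: reduce to a lower core and transpose the lifting problem across the inclusion--core adjunction. The paper, however, dispenses with the induction entirely. Rather than stepping down from $\mathscr C$ to $\core_n\mathscr C$ and then invoking the inductive hypothesis, it applies \cref{statementwithcore} once with $k=0$ to land directly in the $(\infty,1)$-category $\core_1\mathscr C$, invokes \cref{propeqinfty1} there, and transposes across the adjunction $(\infty,1)\mathscr Cat\rightleftarrows(\infty,n+1)\mathscr Cat$ in one go. Your inductive step---showing that $F$ is an $(\infty,n)$-equivalence in $\mathscr C$ iff $\overline F$ is an $(\infty,n-1)$-equivalence in $\core_n\mathscr C$---is exactly the content of one step of the induction inside the proof of \cref{statementwithcore}, so your argument effectively re-derives that result. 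The paper's route is therefore shorter, while yours is more self-contained. You are also more explicit than the paper about the pointed-versus-unpointed subtlety (that \cref{statementwithcore} is stated for objects being equivalent, not for a fixed morphism being an equivalence); the paper silently uses the morphism-level version, which is justified by the proof of \cref{statementwithcore} but not by its statement.
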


\begin{proof}
Saying that $F$ is an $(\infty,n+1)$-equivalence in the $(\infty,n+1)$-category $\mathscr C$ is by \cref{statementwithcore} equivalent to saying that $F$ is an $(\infty,0)$-equivalence in $\mathrm{core}_1\mathscr C$. By \cref{propeqinfty1}, this is equivalent to saying that there exists a solution in the $\infty$-category $(\infty,n+1)\mathscr Cat$ for
\[\begin{tikzcd}
\mathscr C_1\arrow[r,"F"]\arrow[d,  "f" swap,hook ]&\mathrm{core}_1\mathscr C\\
\mathscr I\arrow[ru,dashed, "\widetilde F" swap]
\end{tikzcd}\]
By transposing along the inclusion-core adjunction of $\infty$-functors from \eqref{totalcore} for $m=1$, this is equivalent to saying that there exists a solution in the $\infty$-category $(\infty,1)\mathscr Cat$ for
\[\begin{tikzcd}
\mathscr C_1\arrow[r,"F"]\arrow[d,  "f" swap,hook ]&\mathscr C\\
\mathscr I\arrow[ru,dashed, "\widetilde F" swap]
\end{tikzcd}\]
as desired.
\end{proof}

One could also study when a $k$-morphism of an $(n+1)$-category is an $(\infty,n+1-k)$-equivalence. To this end, recall the suspension functor $\mathfrak S^{k-1}\colon(\infty,1)\mathscr Cat\to(\infty,n)\mathscr Cat$ from \eqref{totalsuspension} and consider the $(\infty,k)$-categories $\mathfrak S^{k-1}\mathscr C_1$ and $\mathfrak S^{k-1}\mathscr I$.
They will also be regarded as $(\infty,n)$-categories for $n\geq k$ via the inclusion of $\infty$-categories $(\infty,k)\mathscr Cat\hookrightarrow\dots\hookrightarrow (\infty,n)\mathscr Cat$.
It can be seen, for instance using the model of complete Segal $\Theta_n$-spaces, that $\mathfrak S^{k-1}\mathscr C_1$ classifies $k$-morphisms in an $(\infty,n+1)$-category. We will now show that $\mathfrak S^{k-1}\mathscr I$ classifies $(\infty,k)$-equivalences in an $(\infty,n+1)$-category in a suitable sense.

\begin{defn}
Let $n\ge0$ and $k\ge1$. A $k$-morphism $F\colon A\to B$ in an $(\infty,n+1)$-category $\mathscr C$ is an \emph{$(\infty,n+1-k)$-equivalence} 
if and only if there is a solution to the lifting problem in the $\infty$-category $(\infty,n+1)\mathscr Cat$
\[\begin{tikzcd}
\mathfrak S^{k-1}\mathscr C_1\arrow[r,"F"]\arrow[d,  "f" swap,hook ]&\mathscr C\\
\mathfrak S^{k-1}\mathscr I\arrow[ru,dashed]
\end{tikzcd}\]
\end{defn}

\subsection{Model categorical techniques}

We recall from \cite{hirschhorn} a model-categorical \cref{MClemma} which will be a crucial tool in the sequel. This technical result will allow to interpret the notion of $(\infty,n)$-equivalence in an $(\infty,n+1)$-category presented by one of the usual models coming from model structures.

We denote by $\sim_{\cM}$ the (left) homotopy relation in $\cM$. We denote by $\bot_\cM$ (resp.~$\top_{\cM}$) the initial (resp.~terminal) object of $\cM$.

The following technical fact allows one to work with strict lifting problems as opposed to lifting problems up to homotopy (meaning, inside an $\infty$-category). We will use this proposition in \cref{inqcats,innmqcats,inoobicats,incomplicial}.

\begin{prop}
\label{MClemma}
Let $n\geq0$. Let $\cM$ be a 
model category for $(\infty,n+1)$-categories\footnote{in the sense of \cite[Definition 15.4]{BarwickSchommerPries}} 
Suppose that $\mathscr C_1$ and $\mathscr I$ are cofibrant objects 
and 
that we are given a factorization of $f$
\begin{equation}
    \label{lift}
    \begin{tikzcd}
    f \colon &[-1cm] \mathscr C_1 \arrow[r, hook, "\varphi"] & \widetilde{\mathscr I} \arrow[r, "\simeq" swap, "\psi"] & \mathscr I
    \end{tikzcd}
\end{equation}
as a cofibration $\varphi$ followed by a weak equivalence $\psi$ in $\cM$.
A $1$-morphism $F\colon \mathscr{C}_1\to \mathscr{C}$ in an $(\infty,n+1)$-category $\mathscr C$ is an $(\infty,n)$-equivalence  if and only if there is a solution to the (strict!) lifting problem in $\cM$
\[\begin{tikzcd}
{\mathscr C_1}\arrow[r,"F"]\arrow[d,  "\varphi" swap,hook ]&\mathscr C\\
{\widetilde{\mathscr{I}}}\arrow[ru,dashed]
\end{tikzcd}\]
\end{prop}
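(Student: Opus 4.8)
The plan is to translate the $\infty$-categorical statement about $(\infty,n)$-equivalences into a statement about strict lifts in the model category $\cM$, using \cref{propeqinfty} as the bridge. First I would recall from \cref{propeqinfty} that $F\colon \mathscr C_1\to\mathscr C$ is an $(\infty,n)$-equivalence if and only if there is a solution to the lifting problem of $\mathscr C_1\hookrightarrow\mathscr I$ against $\mathscr C$ \emph{in the $\infty$-category} $(\infty,n+1)\mathscr Cat$. So it suffices to show that this $\infty$-categorical lifting problem against the map $f\colon\mathscr C_1\to\mathscr I$ admits a solution if and only if the \emph{strict} lifting problem against the cofibration $\varphi\colon\mathscr C_1\hookrightarrow\widetilde{\mathscr I}$ admits a solution in $\cM$. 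Since $\cM$ is assumed to be a model category for $(\infty,n+1)$-categories in the sense of \cite[Definition~15.4]{BarwickSchommerPries}, its underlying $\infty$-category is $(\infty,n+1)\mathscr Cat$, and cofibrant–fibrant replacement realizes maps in the $\infty$-category by maps in $\cM$.

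The key step is the following standard model-categorical fact (a version of \cite[Proposition~1.2.8]{hovey} or the treatment in \cite{hirschhorn}): if $\varphi\colon A\hookrightarrow B$ is a cofibration between cofibrant objects and $X$ is any object, then the existence of a diagonal lift against $\varphi$ is invariant under replacing $\varphi$ by any other cofibration between cofibrant objects sitting in a commuting triangle with a weak equivalence over $B$, \emph{provided} we first replace $X$ by a fibrant object. Concretely, one uses the factorization \eqref{lift}: $f=\psi\circ\varphi$ with $\varphi$ a cofibration and $\psi$ a weak equivalence. Without loss of generality replace $\mathscr C$ by a fibrant object. Then I would argue: (i) a map $F\colon\mathscr C_1\to\mathscr C$ extends along $f$ \emph{up to homotopy} iff it extends along $\varphi$ strictly — the forward direction uses that $\varphi$ is a cofibration, $\mathscr C$ is fibrant, and $\psi$ is an acyclic map between cofibrant objects, so a homotopy lift along $f=\psi\varphi$ can be rectified to a strict lift along $\varphi$ by lifting the homotopy; the backward direction is immediate since a strict lift along $\varphi$ composes with a homotopy inverse data of $\psi$ (or, better, one precomposes the fact that $\varphi$ and $f$ have the same "homotopy cofiber" relative to $\mathscr C_1$). (ii) extending $F$ along $f$ up to homotopy in $\cM$ is exactly the same as solving the lifting problem of $f$ against $\mathscr C$ in the $\infty$-category $(\infty,n+1)\mathscr Cat$, because mapping spaces in the underlying $\infty$-category are computed by the homotopy mapping spaces of $\cM$ and $\mathscr C_1,\mathscr I$ are cofibrant while $\mathscr C$ is fibrant.

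Chaining these equivalences with \cref{propeqinfty} gives the result. The main obstacle I anticipate is making step (i) precise — specifically the rectification of a homotopy-commutative extension along $f$ into a strict extension along $\varphi$. The cleanest route is probably: since $\varphi$ is a cofibration and $\psi$ a weak equivalence between cofibrant objects, the pushout-product / left-lifting arguments show that any homotopy class of maps $\widetilde{\mathscr I}\to\mathscr C$ restricting (up to homotopy) to $F$ on $\mathscr C_1$ can be strictified; equivalently, one observes that the map $\operatorname{Map}(\widetilde{\mathscr I},\mathscr C)\to\operatorname{Map}(\mathscr C_1,\mathscr C)$ agrees up to weak equivalence with $\operatorname{Map}(\mathscr I,\mathscr C)\to\operatorname{Map}(\mathscr C_1,\mathscr C)$ because $\psi$ is a weak equivalence, so $F$ is in the image of $\pi_0$ of the former iff it is in the image of $\pi_0$ of the latter, and for a cofibration with cofibrant source and fibrant target "being in the image on $\pi_0$" is exactly "admitting a strict lift." I would write this out invoking \cite{hirschhorn} for the relevant statements about homotopy function complexes and cofibrant–fibrant objects, and cite \cite[Proposition~1.2.8]{hovey} for the strictification of homotopies into genuine lifts.
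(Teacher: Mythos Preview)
Your proposal is correct and follows essentially the same route as the paper: invoke \cref{propeqinfty} to reduce to an $\infty$-categorical lifting problem, identify derived mapping spaces in $\cM$ with hom-spaces in $(\infty,n+1)\mathscr Cat$, use that $\psi$ is a weak equivalence between cofibrant objects to swap $\mathscr I$ for $\widetilde{\mathscr I}$, and then strictify the up-to-homotopy lift along the cofibration $\varphi$ into a fibrant $\mathscr C$. The paper makes your step~(i) precise by phrasing the lifting problem as the nonemptiness of a fiber product on $\pi_0$ of derived mapping spaces, then citing \cite[Theorem~17.7.2]{hirschhorn} to pass to homotopy classes and \cite[Corollary~7.3.12]{hirschhorn} for the final strictification; these are exactly the references you should use rather than the vaguer appeal to pushout-products.
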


\begin{proof}
By definition, being an equivalence in the $(\infty,n+1)$-category $\mathscr{C}$ is equivalent to the existence of
a solution to the lifting problem in the $\infty$-category $(\infty,n+1)\mathscr Cat$
\[\begin{tikzcd}
\mathscr C_1\arrow[r,"F"]\arrow[d,  "f" swap,hook ]&\mathscr C\\
\mathscr I\arrow[ru,dashed, "\widetilde F" swap]
\end{tikzcd}\]
This is equivalent to saying that there exists an $(\infty,n+1)$-functor $\widetilde F\colon\mathscr I\to\mathscr C$ and an $(\infty,-1)$-equivalence
\[\widetilde F\circ f\simeq_{(\infty,-1)}F\text{ in the $(\infty,0)$-category }(\infty,n+1)\mathscr Cat(\mathscr C_1,\mathscr C).\]
Phrased in terms of mapping spaces, this is equivalent to saying
\[\pi_0(\infty,n+1)\mathscr Cat(\widetilde{\mathscr I},\mathscr C)\tttimes{\pi_0(\infty,n+1)\mathscr Cat({\mathscr C_1},\mathscr C)}*\neq\varnothing.\]
Since derived mapping spaces in the model category $\cM$ compute the hom-$(\infty,0)$-categories of the $\infty$-category $(\infty,n+1)\mathscr Cat$, this is equivalent to saying
\[\pi_0\Map^h_{\cM}(\widetilde{\mathscr I},\mathscr C)\tttimes{\pi_0\Map^h_{\cM}{(\mathscr C_1},\mathscr C)}*\neq\varnothing.\]
By \cite[Theorem 17.7.2]{hirschhorn}, this is equivalent to saying that
\[\cM(\widetilde{\mathscr I},\mathscr C)/_{\sim_{\cM}}\tttimes{\cM({\mathscr C_1},\mathscr C)/_{\sim_{\cM}}}*\neq\varnothing.\]
This in turn is equivalent to the existence of an up-to-homotopy lift
\[\begin{tikzcd}
\mathscr C_1\arrow[r,"F"]\arrow[d,  "\varphi" swap,hook ]\arrow[dr, phantom, "\simeq", very near start, yshift=0.0cm, xshift=0.2cm]&\mathscr C\\
\widetilde{\mathscr{I}} \arrow[ru,dashed, "\widetilde F" swap] &\phantom{x}
\end{tikzcd}\]
in $\cM$. 
By \cite[Corollary 7.3.12]{hirschhorn}, this is equivalent to having a strict lift in the same diagram, as desired. 
\end{proof}

With an analog proof, one can prove a more general statement, which recovers the previous one for $k=1$. Consider the map $\mathfrak S^{k-1}f\colon\mathfrak S^{k-1}\mathscr C_1\to\mathfrak S^{k-1}\mathscr I$.

\begin{prop}
\label{MClemma2}
Let $n\geq0$ and $k>0$.
Let $\cM$ be a 
model category for $(\infty,n+1)$-categories\footnote{in the sense of \cite[Definition 15.4]{BarwickSchommerPries}}.
Suppose that $\mathfrak S^{k-1}\mathscr C_1$ and $\mathfrak S^{k-1}\mathscr I$ are cofibrant objects.
Suppose also that
we are given a factorization of $\mathfrak S^{k-1}f$
\begin{equation}
    \label{lift2}
    \begin{tikzcd}
    \mathfrak S^{k-1}f \colon &[-1cm] \mathfrak S^{k-1}\mathscr C_1 \arrow[r, hook, "\varphi_k"] & \widetilde{\mathscr I}_k \arrow[r, "\simeq" swap, "\psi_k"] & \mathfrak S^{k-1}\mathscr I
    \end{tikzcd}
\end{equation}
as a cofibration $\varphi_k$ followed by a weak equivalence $\psi_k$ in $\cM$.
Then, a $k$-morphism $F\colon A\to B$ in an $(\infty,n+1)$-category $\mathscr C$ is an $(\infty,n+1-k)$-equivalence  if and only if there is a solution to the (strict!) lifting problem in $\cM$
\[\begin{tikzcd}
{\mathfrak S^{k-1}\mathscr C_1}\arrow[r,"F"]\arrow[d,  "\varphi_k" swap,hook ]&\mathscr C\\
{\widetilde{\mathscr I}_k}\arrow[ru,dashed]
\end{tikzcd}\]
\end{prop}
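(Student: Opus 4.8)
The plan is to run the proof of \cref{MClemma} with every occurrence of $\mathscr C_1$, $\mathscr I$, $f$ and ``$(\infty,n)$-equivalence'' replaced throughout by $\mathfrak S^{k-1}\mathscr C_1$, $\mathfrak S^{k-1}\mathscr I$, $\mathfrak S^{k-1}f$ and ``$(\infty,n+1-k)$-equivalence of a $k$-morphism''. By the defining lifting property of an $(\infty,n+1-k)$-equivalence, saying that $F$ is one is the same as asking for a solution to the lifting problem of $\mathfrak S^{k-1}f$ against $F$ in the $\infty$-category $(\infty,n+1)\mathscr Cat$. Unwinding this in terms of mapping spaces, and using that $\mathfrak S^{k-1}\mathscr C_1$ and $\mathfrak S^{k-1}\mathscr I$ are cofibrant so that their derived mapping spaces in $\cM$ compute the hom-$(\infty,0)$-categories of $(\infty,n+1)\mathscr Cat$, this becomes the non-emptiness condition
\[
\pi_0\Map^h_{\cM}(\mathfrak S^{k-1}\mathscr I,\mathscr C)\tttimes{\pi_0\Map^h_{\cM}(\mathfrak S^{k-1}\mathscr C_1,\mathscr C)}*\neq\varnothing.
\]

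Next I would invoke \cite[Theorem 17.7.2]{hirschhorn} together with the factorization $\mathfrak S^{k-1}f=\psi_k\circ\varphi_k$ of \eqref{lift2} to rewrite this as
\[
\cM(\widetilde{\mathscr I}_k,\mathscr C)/_{\sim_{\cM}}\tttimes{\cM(\mathfrak S^{k-1}\mathscr C_1,\mathscr C)/_{\sim_{\cM}}}*\neq\varnothing,
\]
which is precisely the statement that the square with left edge the cofibration $\varphi_k$ and top edge $F$ admits a lift up to left homotopy in $\cM$. Finally, since $\varphi_k$ is a cofibration between cofibrant objects, \cite[Corollary 7.3.12]{hirschhorn} upgrades this homotopy lift to a strict lift in the same diagram, which is the desired conclusion.

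I expect the only step needing more care than in \cref{MClemma} to be the bookkeeping around the suspensions: one must work with honest cofibrant model-categorical representatives of $\mathfrak S^{k-1}\mathscr C_1$ and $\mathfrak S^{k-1}\mathscr I$ and check that the suspension construction available in $\cM$ (for instance $\Sigma$ in saturated complicial sets, or $V[1]$ in complete Segal $\Theta_n$-spaces) models the $\infty$-categorical functor $\mathfrak S^{k-1}$, so that the identification of derived mapping spaces out of them with hom-$(\infty,0)$-categories of $(\infty,n+1)\mathscr Cat$ is legitimate. This is what the cofibrancy hypotheses and the factorization hypothesis \eqref{lift2} are arranged to guarantee, and once it is granted every remaining step is formally identical to the proof of \cref{MClemma}.
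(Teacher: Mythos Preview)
Your proposal is correct and is essentially the paper's own approach: the paper states that \cref{MClemma2} is proved ``with an analog proof'' to \cref{MClemma}, and your write-up carries out exactly that analog, replacing $\mathscr C_1$, $\mathscr I$, $f$ by their $(k-1)$-fold suspensions and invoking the same two results from Hirschhorn. Your closing remarks about needing honest cofibrant models for the suspensions are also on point and match the content of the paper's \cref{suspensionrmk}, though note that for the proof of \cref{MClemma2} as stated this bookkeeping is already absorbed into the hypotheses.
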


\begin{rmk}
\label{suspensionrmk}
Given an explicit implementation of $\mathfrak S^{k-1}$ as a homotopical and cofibration-preserving functor $\Sigma^{k-1}$, one can obtain an instance of \eqref{lift2} by applying $\Sigma^{k-1}$ to \eqref{lift}.
    \[
  \begin{tikzcd}
    \Sigma^{k-1}f \colon &[-1cm]\Sigma^{k-1}\mathscr C_1 \arrow[r, hook, "\Sigma^{k-1}\varphi"] &  \Sigma^{k-1}\widetilde{\mathscr I} \arrow[r, "\simeq" swap, "\Sigma^{k-1}\psi"] &  \Sigma^{k-1}\mathscr I.
    \end{tikzcd}
    \]
\end{rmk}

In order to apply \cref{MClemma}, there's interest in $\mathscr C_1$, $\mathscr I$ and in the most simple of their cofibrant replacements in the main model categories that model $(\infty,n+1)$-categories. In the next section we will interpret \cref{MClemma} (and sometimes \cref{MClemma2}) in the main model structures for $(\infty,n+1)$-categories.

\subsection{Equivalences in an $(\infty,1)$-category presented by a model}

We discuss the notion of $(\infty,0)$-equivalence in $(\infty,1)$-categories presented by quasi-categories, naturally marked quasi-categories, and Kan-enriched categories. The treatment of other models -- such as complete Segal space, saturated $1$-complicial sets, and $1$-comical sets --
are recovered for the case $n=1$ of \cref{ncomplsets,ncomical,cstheta}.

\subsubsection{Equivalences in a quasi-category}

Consider the Joyal model structure $\sset_{(\infty,1)}$ on simplicial sets for quasi-categories from \cite[\textsection 6.1]{JoyalVolumeII} or \cite[Theorem 2.2.5.1]{htt}. A model for the inclusion $1\mathscr Cat\hookrightarrow(\infty,1)\mathscr Cat$ from \cref{nerveinclusion} is implemented by the usual nerve
$N\colon\cat\to\sset_{(\infty,1)}$, which is a right Quillen functor. Hence, a model for the walking $1$-morphism $\mathscr C_1$ is the $1$-simplex $N\cC_1\cong\Delta[1]$, and a model for $\mathscr I$ is $N\cI$. A factorization of $f\colon\Delta[1]\to N\cI$ of the form \eqref{lift} is the trivial factorization
\[
\begin{tikzcd}
    f \colon &[-1cm] {\Delta[1]} \arrow[r, hook,"{f}"] & {N\cI} \arrow[r, "\simeq" swap, ""] & N\cI.
    \end{tikzcd}
\]

\begin{prop}
\label{inqcats}
A $1$-morphism $F\colon A\to B$ in a quasi-category $\mathscr C$ is an $(\infty,0)$-equivalence  if and only if there is a solution to either -- hence all -- of the following (strict!) lifting problems in $\sset$
\[\begin{tikzcd}
{\Delta[1]}\arrow[r,"F"]\arrow[d,  "f" swap,hook ]&\mathscr C\\
N\cI\arrow[ru,dashed]
\end{tikzcd}
\quad\text{ and }\quad
\begin{tikzcd}
{\Delta[1]}\arrow[r,"F"]\arrow[d,  "f" swap,hook ]&\mathscr C\\
\mathrm{sk}_2 N\cI\arrow[ru,dashed]
\end{tikzcd}
\quad\text{ and }\quad
\begin{tikzcd}
{\Delta[1]}\arrow[r,"F"]\arrow[d,  "{[1,2]}" swap,hook ]&\mathscr C\\
\Delta[0] \aamalg{\Delta[1]}\Delta[3]\aamalg{\Delta[1]}\Delta[0]\arrow[ru,dashed]
\end{tikzcd}
\]
\end{prop}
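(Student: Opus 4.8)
The plan is to deduce all three equivalences from \cref{MClemma}, applied to the Joyal model structure $\sset_{(\infty,1)}$, together with one elementary comparison of lifting data for the middle case. Since every simplicial set is cofibrant in $\sset_{(\infty,1)}$, the cofibrancy hypotheses of \cref{MClemma} on $\mathscr C_1$ and $\mathscr I$ are automatic, once we recall (as in the paragraph preceding the statement) that the nerve $N\colon\cat\to\sset_{(\infty,1)}$ is right Quillen and realizes the inclusion $1\mathscr Cat\hookrightarrow(\infty,1)\mathscr Cat$, so that $\Delta[1]=N\cC_1$ models $\mathscr C_1$, $N\cI$ models $\mathscr I$, and $f\colon\Delta[1]\to N\cI$ is the monomorphism picking out the edge $f_{\cI}\colon a\to b$ of $\cI$.

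First I would treat the $N\cI$ case: the trivial factorization $\Delta[1]\overset{f}{\hookrightarrow}N\cI\overset{\id}{\longrightarrow}N\cI$ exhibits $f$ as a monomorphism (hence a cofibration) followed by a weak equivalence, so \cref{MClemma} gives that $F$ is an $(\infty,0)$-equivalence iff there is a strict lift against $f$. For the collapsed-$\Delta[3]$ case I would use the factorization $\Delta[1]\overset{[1,2]}{\longrightarrow}\Delta[0]\aamalg{\Delta[1]}\Delta[3]\aamalg{\Delta[1]}\Delta[0]\overset{\psi}{\longrightarrow}N\cI$, where the first map picks out the edge $[1,2]$ of $\Delta[3]$ — which survives both collapses and remains non-degenerate in the quotient, so the map is a monomorphism, hence a cofibration — and $\psi$ is induced by the $3$-simplex of $N\cI$ on the vertices $(a,a,b,b)$, whose edges $[0,1]$ and $[2,3]$ are degenerate (so $\psi$ descends to the pushout) and whose edge $[1,2]$ is $f_{\cI}$ (so $\psi\circ[1,2]=f$). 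It then remains to check that $\psi$ is a weak equivalence, which I would do by showing source and target are both contractible in $\sset_{(\infty,1)}$: the target because $N\cI\simeq N\cC_0=\Delta[0]$, since $\cI\simeq\cC_0$ by \cref{Icontractible} and $N$ preserves this equivalence; the source because it is an iterated pushout of $\Delta[0]$, $\Delta[3]$, $\Delta[0]$ along the cofibrations $\Delta^{\{0,1\}}\hookrightarrow\Delta[3]$ and $\Delta^{\{2,3\}}\hookrightarrow\Delta[0]\aamalg{\Delta[1]}\Delta[3]$, hence a homotopy pushout of simplicial sets each weakly equivalent to $\Delta[0]$. Any map between contractible objects is a weak equivalence, so $\psi$ is one, and \cref{MClemma} applies.

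For the $\mathrm{sk}_2 N\cI$ case, \cref{MClemma} does \emph{not} apply directly with $\widetilde{\mathscr I}=\mathrm{sk}_2 N\cI$, because the skeletal inclusion $\mathrm{sk}_2 N\cI\hookrightarrow N\cI$ is not a weak equivalence ($\mathrm{sk}_2 N\cI$ is, up to weak equivalence, the $2$-sphere). Instead I would sandwich this lifting problem between the two already handled. On the one hand, restriction along $\mathrm{sk}_2 N\cI\hookrightarrow N\cI$ shows a lift against $f\colon\Delta[1]\hookrightarrow N\cI$ restricts to a lift against $\Delta[1]\hookrightarrow\mathrm{sk}_2 N\cI$. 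On the other hand, the doubly degenerate $3$-simplex $s_0s_1(f_{\cI})$ of $N\cI$ lands in $\mathrm{sk}_1 N\cI\subseteq\mathrm{sk}_2 N\cI$ and induces a map $r\colon\Delta[0]\aamalg{\Delta[1]}\Delta[3]\aamalg{\Delta[1]}\Delta[0]\to\mathrm{sk}_2 N\cI$ under $\Delta[1]$ (its edge $[1,2]$ is again $f_{\cI}$), so precomposition with $r$ turns a lift against $\Delta[1]\hookrightarrow\mathrm{sk}_2 N\cI$ into a lift against $\Delta[1]\overset{[1,2]}{\longrightarrow}\Delta[0]\aamalg{\Delta[1]}\Delta[3]\aamalg{\Delta[1]}\Delta[0]$. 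Combining with the two previous cases, the chain ($F$ is an $(\infty,0)$-equivalence) $\Rightarrow$ (lift vs.\ $N\cI$) $\Rightarrow$ (lift vs.\ $\mathrm{sk}_2 N\cI$) $\Rightarrow$ (lift vs.\ collapsed $\Delta[3]$) $\Rightarrow$ ($F$ is an $(\infty,0)$-equivalence) closes up, so all three lifting problems are equivalent.

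The step I expect to be the main obstacle is the verification that $\psi$ is a weak equivalence — i.e.\ organizing the collapsed $\Delta[3]$ as a genuine homotopy pushout of contractible simplicial sets in $\sset_{(\infty,1)}$ and identifying $N\cI$ as contractible — together with the (conceptually rather than technically subtle) point that the $\mathrm{sk}_2 N\cI$ model cannot be fed directly into \cref{MClemma}, since $\mathrm{sk}_2 N\cI\hookrightarrow N\cI$ fails to be a weak equivalence, and must instead be squeezed between the other two lifting problems.
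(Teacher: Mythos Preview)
Your treatment of the first lifting problem matches the paper exactly: both apply \cref{MClemma} to the trivial factorization $\Delta[1]\hookrightarrow N\cI \xrightarrow{\id} N\cI$. For the other two problems the paper takes a shortcut and simply cites \cite[Proposition~2.2]{DuggerSpivakMapping}; your idea of instead feeding a second factorization through \cref{MClemma} and then sandwiching the $\mathrm{sk}_2N\cI$ problem between the other two is a genuinely different and more self-contained route, and the sandwiching step is fine.

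There is, however, a real gap in your argument that $\psi$ is a Joyal weak equivalence. You argue that the collapsed $\Delta[3]$ is contractible because it is a homotopy pushout of simplicial sets ``each weakly equivalent to $\Delta[0]$'', but in the Joyal model structure $\Delta[1]$ and $\Delta[3]$ are \emph{not} weakly equivalent to $\Delta[0]$: the nerve $N[n]$ models the $(\infty,1)$-category $[n]$, which is not equivalent to the terminal category for $n\geq1$. Your argument would go through in the Kan--Quillen model structure, where every $\Delta[n]$ is contractible, but that is not the model structure in play here. So the contractibility of $\Delta[0]\aamalg{\Delta[1]}\Delta[3]\aamalg{\Delta[1]}\Delta[0]$ in $\sset_{(\infty,1)}$ --- which is true --- does not follow from the homotopy-pushout-of-contractibles argument you sketch. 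Establishing it is precisely the nontrivial content that the paper outsources to Dugger--Spivak; if you want to keep the argument self-contained you will need a different proof of this step (for instance, an explicit analysis showing the map $\psi$, or the map to $\Delta[0]$, is a categorical equivalence).
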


The first is an application of \cref{MClemma}, the second and third ones follow from \cite[Proposition 2.2]{DuggerSpivakMapping}.

\subsubsection{Equivalences in a naturally marked quasi-category}

Consider the Lurie model structure $\sset^+_{(\infty,1)}$ on marked simplicial sets for naturally marked quasi-categories from \cite[Proposition 3.1.3.7]{htt}.
A model for the inclusion $1\mathscr Cat\hookrightarrow(\infty,1)\mathscr Cat$ from \cref{nerveinclusion} is implemented by the naturally marked nerve $N^{\natural}\colon\cat\to\sset^+_{(\infty,1)}$ which marks the $1$-simplices that are isomorphisms (see \cite[\textsection 1]{GHL} for more details), which is a right Quillen functor. Hence, a model for the walking $1$-morphism $\mathscr C_1$ is $N^{\natural}\cC_1=\Delta[1]$, the minimally marked $1$-simplex, and a model for $\mathscr I$ is $N^{\natural}\cI=N^\sharp\cI$, the simplicial set $N\cI$ maximally marked.
Relevant factorizations of $f\colon\Delta[1]\to N^\sharp\cI$ of the form \eqref{lift} include the trivial factorization
\[
\begin{tikzcd}
    f \colon &[-1cm] {\Delta[1]} \arrow[r, hook,"{[0,1]}"] & {N^\sharp\cI} \arrow[r, "\simeq" swap, "{\id_{N^\sharp\cI}}"] & N^\sharp\cI,
    \end{tikzcd}
\]
and the factorization
\[
\begin{tikzcd}
    f \colon &[-1cm] {\Delta[1]} \arrow[r, hook,"{[0,1]}"] & {\Delta[1]_t} \arrow[r, "\simeq" swap, "f"] & N^\sharp\cI.
    \end{tikzcd}
\]

The following is an application of \cref{MClemma}.
\begin{prop}
\label{innmqcats}
A $1$-morphism $F\colon A\to B$ in a naturally marked quasi-category $\mathscr C$ is an $(\infty,0)$-equivalence  if and only if there is a solution to either -- hence all -- of the following (strict!) lifting problems in $\sset^+$
\[\begin{tikzcd}
{\Delta[1]}\arrow[r,"F"]\arrow[d,  "f" swap,hook ]&\mathscr C\\ N^\sharp\cI\arrow[ru,dashed]
\end{tikzcd}
\quad\text{ and }\quad
\begin{tikzcd}
{\Delta[1]}\arrow[r,"F"]\arrow[d, "{[0,1]}" swap,hook ]&\mathscr C\\
\Delta[1]_t\arrow[ru,dashed]
\end{tikzcd}
\]
\end{prop}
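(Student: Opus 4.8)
The plan is to invoke \cref{MClemma} with the model category $\cM = \sset^+_{(\infty,1)}$, the Lurie model structure for naturally marked quasi-categories, for $n=0$. First I would check the hypotheses: both $\mathscr C_1 = N^\sharp\cC_1 = \Delta[1]$ (minimally marked) and $\mathscr I = N^\sharp\cI$ are cofibrant, since every object in $\sset^+_{(\infty,1)}$ is cofibrant. I would then supply the two factorizations of $f\colon\Delta[1]\to N^\sharp\cI$ displayed just before the statement: the trivial one $\Delta[1]\xhookrightarrow{\id}N^\sharp\cI\xrightarrow{\simeq}N^\sharp\cI$, and the one passing through $\Delta[1]_t$, namely $\Delta[1]\xhookrightarrow{[0,1]}\Delta[1]_t\xrightarrow{\simeq}N^\sharp\cI$. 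For each, I must verify that the first map is a cofibration (an inclusion that is also a monomorphism on underlying simplicial sets, trivially true for the inclusion $\Delta[1]\hookrightarrow\Delta[1]_t$ which only adds a marking to the nondegenerate $1$-simplex) and that the second map is a weak equivalence in $\sset^+_{(\infty,1)}$ (for the first factorization the second map is the identity; for the second, $\Delta[1]_t\to N^\sharp\cI$ is the standard marked anodyne inclusion of the walking marked edge into the walking marked isomorphism, hence a weak equivalence).

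Having checked the hypotheses, \cref{MClemma} directly yields, for each choice of factorization $\mathscr C_1\xhookrightarrow{\varphi}\widetilde{\mathscr I}\xrightarrow{\simeq}\mathscr I$, that a $1$-morphism $F\colon\Delta[1]\to\mathscr C$ is an $(\infty,0)$-equivalence if and only if the strict lifting problem against $\varphi$ in $\sset^+$ has a solution. Applying this with $\widetilde{\mathscr I} = N^\sharp\cI$ gives the first displayed lifting problem, and applying it with $\widetilde{\mathscr I} = \Delta[1]_t$ gives the second. Since both characterize the same property (being an $(\infty,0)$-equivalence), the two strict lifting problems are equivalent to one another, which is exactly the ``either -- hence all'' phrasing of the statement.

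The only genuinely content-bearing verification is that the inclusion $\Delta[1]\hookrightarrow\Delta[1]_t$ is a cofibration and that $\Delta[1]_t\to N^\sharp\cI$ is a weak equivalence in $\sset^+_{(\infty,1)}$; the former is immediate from the description of cofibrations as monomorphisms (on underlying simplicial sets) in the Lurie model structure, and the latter can be cited from the theory of marked anodyne maps in \cite[\textsection 3.1]{htt} --- the inclusion $\Delta[1]_t\hookrightarrow (\Delta[1])^\sharp$ is marked anodyne, and $(\Delta[1])^\sharp \to N^\sharp\cI$ is a weak equivalence because the underlying map $\Delta[1]\to N\cI$ is a categorical equivalence (an inclusion of the walking arrow into the walking isomorphism, with all relevant edges marked). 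I expect this marked-anodyne bookkeeping to be the main (though still routine) obstacle; everything else is a formal application of \cref{MClemma}. In the paper this is simply asserted, as indicated by the one-line remark ``The following is an application of \cref{MClemma}.''
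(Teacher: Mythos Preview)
Your approach is exactly the paper's: apply \cref{MClemma} to the two displayed factorizations. The overall structure is correct, and the paper itself offers no more detail than ``this is an application of \cref{MClemma}.''

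However, your justification that $\Delta[1]_t \to N^\sharp\cI$ is a weak equivalence contains an error. You write that ``the underlying map $\Delta[1]\to N\cI$ is a categorical equivalence,'' but this is false: the walking arrow $\cC_1$ is not equivalent to the walking isomorphism $\cI$ as categories (one has a non-invertible morphism, the other does not). The correct argument is that both $\Delta[1]_t$ and $N^\sharp\cI$ are weakly equivalent to $\Delta[0]$ in $\sset^+_{(\infty,1)}$: for $N^\sharp\cI$ this follows because $N^\sharp\cI = N^\natural\cI$ is fibrant and $\cI\simeq\cC_0$; for $\Delta[1]_t$ one can observe that maps out of $\Delta[1]_t$ into a naturally marked quasi-category are exactly equivalences, and the space of equivalences is weakly equivalent to the space of objects. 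Alternatively, $\Delta[1]_t \hookrightarrow N^\sharp\cI$ can be checked to be marked anodyne directly. (Also note that $\Delta[1]_t$ and $(\Delta[1])^\sharp$ are the same object, so your intermediate factorization through $(\Delta[1])^\sharp$ is vacuous.)

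A minor notational slip: you write $\mathscr C_1 = N^\sharp\cC_1$, but $N^\sharp\cC_1$ would be $\Delta[1]_t$, not the minimally marked $\Delta[1]$. The correct model, as the paper states, is $N^\natural\cC_1 = \Delta[1]$.
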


\subsubsection{Equivalences in a category enriched over Kan complexes}
Consider the Bergner model structure $\sset\cat_{(\infty,1)}$ on simplicial categories for Kan-enriched categories from \cite{bergner}.
A model for the inclusion $1\mathscr Cat\hookrightarrow(\infty,1)\mathscr Cat$ from \cref{nerveinclusion} is implemented by the base-change functor $\mathrm{disc}_*\colon\cat\to\sset\cat$ along the discrete inclusion $\mathrm{disc}\colon\set\to\sset$, which regards each category as a simplicial category with discrete hom-simplicial sets and which is a right Quillen functor. Hence, a model for the walking $1$-morphism $\mathscr C_1$ is $\mathrm{disc}_*\cC_1$. The following is an application of \cref{MClemma}.

\begin{prop}
\label{enrichedlemma}
A $1$-morphism $F\colon A\to B$ in a Kan-category $\mathscr C$ is an $(\infty,0)$-equivalence if and only if there is a solution of the following lifting problem in the category $\sset\cat$
\[\begin{tikzcd}
{\mathrm{disc}_*\cC_1}\arrow[r,"F"]\arrow[d,  "f" swap,hook ]&\mathscr C\\
\mathfrak C N\cI\arrow[ru,dashed] & \phantom{x}
\end{tikzcd}\]
\end{prop}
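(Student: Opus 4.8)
The plan is to realize this as the instance $n=0$ of \cref{MClemma} in the Bergner model structure $\cM=\sset\cat_{(\infty,1)}$, taking as the cofibrant model $\widetilde{\mathscr I}$ for the walking isomorphism the rigidification $\mathfrak C N\cI$ and using the trivial factorization of $f$.

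First I would pin down the models. Since the right Quillen functor $\mathrm{disc}_*\colon\cat\to\sset\cat_{(\infty,1)}$ presents the inclusion $1\mathscr Cat\hookrightarrow(\infty,1)\mathscr Cat$ from \cref{nerveinclusion}, a model for $\mathscr C_1$ is $\mathrm{disc}_*\cC_1$, which is moreover cofibrant: one has $\mathrm{disc}_*\cC_1\cong\mathfrak C\Delta[1]$, and $\mathfrak C$ applied to any simplicial set is a cofibrant simplicial category. For the walking isomorphism I would take the object $\mathfrak C N\cI$: the nerve $N\cI$ is a Kan complex, hence a quasi-category presenting $\mathscr I$ in the Joyal model structure, and applying the left adjoint $\mathfrak C$ of the homotopy coherent nerve $N^{\mathrm{hc}}$ yields a cofibrant object of $\sset\cat_{(\infty,1)}$ presenting the same $(\infty,1)$-category. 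The comparison map $f\colon\mathrm{disc}_*\cC_1\to\mathfrak C N\cI$ is obtained by applying $\mathfrak C$ to the monomorphism $\Delta[1]\hookrightarrow N\cI$ — so it is a model for the inclusion $\mathscr C_1\hookrightarrow\mathscr I$ — and since $\mathfrak C$ is left Quillen it carries this cofibration of simplicial sets to a cofibration of simplicial categories.

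Next, because $f$ is itself a cofibration with cofibrant target, I would use the trivial factorization
\[
\begin{tikzcd}
f\colon &[-1cm] \mathrm{disc}_*\cC_1 \arrow[r,hook,"f"] & \mathfrak C N\cI \arrow[r,"\simeq" swap, "\id"] & \mathfrak C N\cI
\end{tikzcd}
\]
as a cofibration followed by a (trivial) weak equivalence; this supplies the data \eqref{lift} demanded by \cref{MClemma} with $\widetilde{\mathscr I}=\mathfrak C N\cI$ and $\varphi=f$. Invoking \cref{MClemma} then yields exactly the stated equivalence between $F$ being an $(\infty,0)$-equivalence in $\mathscr C$ and the strict lifting problem of the statement admitting a solution.

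The part that will require care is not the model-categorical formalism but checking that $\mathfrak C N\cI$ really does present the walking isomorphism $\mathscr I$ and that $f$ really does present the inclusion $\mathscr C_1\hookrightarrow\mathscr I$ — that is, reconciling the descriptions of these objects coming from the ordinary nerve $N$, from $\mathrm{disc}_*$, and from the rigidification $\mathfrak C\dashv N^{\mathrm{hc}}$ — together with the (routine) verification that $\mathrm{disc}_*\cC_1$ and $\mathfrak C N\cI$ are cofibrant. Once these identifications are recorded, the result is an immediate application of \cref{MClemma}.
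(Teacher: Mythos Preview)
Your proposal is correct and is precisely the approach the paper takes: it states the result as a direct application of \cref{MClemma}, and the details you spell out---identifying $\mathrm{disc}_*\cC_1\cong\mathfrak C\Delta[1]$, taking $\mathfrak C N\cI$ as a cofibrant model for $\mathscr I$, and using the trivial factorization since $\mathfrak C(\Delta[1]\hookrightarrow N\cI)$ is already a cofibration between cofibrant objects---are exactly the verifications implicitly required.
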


\subsection{Equivalences in an $(\infty,2)$-category presented by a model}

We discuss the notion of $(\infty,1)$-equivalence in $(\infty,2)$-categories presented by $\infty$-bicategories and quasi-categorically enriched categories. The treatment of other models -- such as $2$-complicial sets, saturated $2$-comical sets and complete Segal $\Theta_2$-spaces --
are recovered for the case $n=2$ of \cref{ncomplsets,ncomical,cstheta}.

\subsubsection{Equivalences in an $\infty$-bicategory}

Consider the Lurie model structure on scaled simplicial sets for $\infty$-bicategories from \cite[Theorem 4.2.7]{LurieGoodwillie} or \cite[Definition 6.1]{GHL}.
A model for the inclusion $1\mathscr Cat\hookrightarrow(\infty,1)\mathscr Cat\hookrightarrow(\infty,2)\mathscr Cat$ from \cref{nerveinclusion} is implemented by the functor $N^\sharp\colon\cat\to\sset^{\mathrm{sc}}$ which marks all $2$-simplices in the nerve of a category, and which is right Quillen. Hence, model for the walking $1$-morphism $\mathscr C_1$ is $N^{\sharp}\cC_1=\Delta[1]$, the $1$-simplex with the unique possible scaling, and a model for $\mathscr I$ is $N^\sharp\cI$, the simplicial set $N\cI$ with the maximal scaling. A factorization of $f\colon\Delta[1]\to N^\sharp\cI$ of the form \eqref{lift} is the trivial factorization
\[
\begin{tikzcd}
    f \colon &[-1cm] {\Delta[1]} \arrow[r, hook,"{f}"] & {N^\sharp\cI} \arrow[r, "\simeq" swap, ""] & N^\sharp\cI,
    \end{tikzcd}
\]

 The following is an application of \cref{MClemma}. See also \cite[Definition 1.25]{GHL} for other characterizations of $(\infty,1)$-equivalences in an $\infty$-bicategory.

\begin{prop}
\label{inoobicats}
A $1$-morphism $F\colon A\to B$ in an $\infty$-bicategory $\mathscr C$ is an $(\infty,1)$-equivalence  if and only if there is a solution to the following (strict!) lifting problem in $\sset^{sc}$
\[\begin{tikzcd}
{\Delta[1]}\arrow[r,"F"]\arrow[d,  "f" swap,hook ]&\mathscr C\\
N^\sharp\cI\arrow[ru,dashed]
\end{tikzcd}
\]
\end{prop}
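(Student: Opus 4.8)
The plan is to obtain this as a direct instance of \cref{MClemma} in the case $n=1$, with $\cM$ taken to be the Lurie model structure on scaled simplicial sets $\sset^{\mathrm{sc}}$ from \cite[Theorem 4.2.7]{LurieGoodwillie}, which is a model category for $(\infty,2)$-categories. As recorded in the paragraph preceding the statement, the right Quillen functor $N^\sharp\colon\cat\to\sset^{\mathrm{sc}}$ presents the inclusion from \cref{nerveinclusion}, so that the walking $1$-morphism $\mathscr C_1$ is modeled by $N^\sharp\cC_1=\Delta[1]$ carrying its unique scaling, and $\mathscr I$ is modeled by $N^\sharp\cI$.

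To invoke \cref{MClemma} I would first check its hypotheses. In the Lurie model structure the cofibrations are precisely the maps whose underlying map of simplicial sets is a monomorphism, so every scaled simplicial set is cofibrant; in particular $\Delta[1]$ and $N^\sharp\cI$ are cofibrant. For the required factorization \eqref{lift} of $f\colon\Delta[1]\to N^\sharp\cI$ one uses the trivial factorization displayed just above the statement, whose first leg is $f$ and whose second leg is the identity of $N^\sharp\cI$. This is legitimate: $f$ is injective in every simplicial degree, hence a monomorphism on underlying simplicial sets and therefore a cofibration, and the only $2$-simplices of $\Delta[1]$ are degenerate and so automatically thin, so $f$ is genuinely a map of scaled simplicial sets and no scaling condition obstructs it; the identity is of course a weak equivalence.

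With these verifications in hand, \cref{MClemma} applies verbatim with $\varphi=f$, $\widetilde{\mathscr I}=N^\sharp\cI$, and $\psi=\id_{N^\sharp\cI}$, and yields exactly the claim: a $1$-morphism $F\colon\Delta[1]\to\mathscr C$ in an $\infty$-bicategory $\mathscr C$ is an $(\infty,1)$-equivalence if and only if the strict lifting problem in the statement admits a solution. Here one uses that an $\infty$-bicategory is by definition a fibrant object of $\sset^{\mathrm{sc}}$, which is the fibrancy hypothesis implicit in the up-to-homotopy versus strict lift comparison inside the proof of \cref{MClemma}. The argument is entirely formal given \cref{MClemma}; the only point deserving attention — and it is routine — is the cofibrancy bookkeeping for $\sset^{\mathrm{sc}}$ together with the harmlessness of the scaling on $N^\sharp\cI$, the genuinely substantive facts (that $\sset^{\mathrm{sc}}$ models $(\infty,2)\mathscr Cat$ and that $N^\sharp$ presents the inclusion of \cref{nerveinclusion}) being imported from \cite{LurieGoodwillie} and \cite{GHL}.
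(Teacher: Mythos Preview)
Your proposal is correct and follows exactly the approach indicated in the paper: the paper simply records that the proposition is ``an application of \cref{MClemma}'' using the trivial factorization of $f\colon\Delta[1]\hookrightarrow N^\sharp\cI$ displayed just before the statement, and you spell out precisely this, together with the routine cofibrancy checks in $\sset^{\mathrm{sc}}$.
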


\subsubsection{Equivalences in a category enriched over (naturally marked) quasi-categories}

Consider the Bergner--Lurie model structure $\sset_{(\infty,1)}\cat$ (resp.~$\sset^+_{(\infty,1)}\cat$) on simplicial categories (resp.~marked simplicial categories) for quasi-categorically enriched categories (resp.~categories enriched over naturally marked quasi-categories). The model structure $\sset_{(\infty,1)}\cat$ (resp.~$\sset^+_{(\infty,1)}\cat$) is an instance of 
 \cite[Theorem A.3.2.24]{htt} in the specific case of \cite[Example A.3.2.23]{htt} (resp.~\cite[Example A.3.2.22]{htt}).
A model for the inclusion $\mathscr Cat\hookrightarrow(\infty,1)\mathscr Cat\hookrightarrow(\infty,2)\mathscr Cat$ from \cref{nerveinclusion} is implemented by the right Quillen functor given by the base-change functor $\mathrm{disc}_*\colon\cat\to\sset^{(+)}_{(\infty,1)}\cat$ along the discrete inclusion $\mathrm{disc}\colon\set\to\sset$.
The following are applications of \cref{MClemma}.

\begin{prop}
\label{enrichedlemma2}
A $1$-morphism $F\colon A\to B$ in a category $\mathscr C$ enriched over naturally marked quasi-categories is an $(\infty,1)$-equivalence if and only if there is a solution to the following lifting problem in the category $\sset^+\cat$:
\[
\begin{tikzcd}
{\mathrm{disc}_*\cC_1}\arrow[r,"F"]\arrow[d,  "f" swap,hook ]&\mathscr C\\
\mathfrak C^{sc}N^{\sharp}\cI\arrow[ru,dashed]&\phantom{x}
\end{tikzcd}
\]
\end{prop}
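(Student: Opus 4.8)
The plan is to invoke \cref{MClemma} for $n=1$ with $\cM=\sset^+_{(\infty,1)}\cat$, which by \cite[Theorem A.3.2.24]{htt} applied to \cite[Example A.3.2.22]{htt} is a model category for $(\infty,2)$-categories, exactly as in the proofs of \cref{inoobicats,enrichedlemma}. So the task reduces to producing, inside $\sset^+_{(\infty,1)}\cat$, cofibrant models for the abstract objects $\mathscr C_1$ and $\mathscr I$ of \cref{MClemma} together with a factorization \eqref{lift} of the canonical map $f\colon\mathscr C_1\to\mathscr I$ as a cofibration followed by a weak equivalence.

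First I would record the models. Through the right Quillen functor $\mathrm{disc}_*\colon\cat\to\sset^+_{(\infty,1)}\cat$ realizing the inclusion $1\mathscr Cat\hookrightarrow(\infty,2)\mathscr Cat$ from \cref{nerveinclusion}, a model for the walking $1$-morphism $\mathscr C_1$ is $\mathrm{disc}_*\cC_1$; this is cofibrant since it is the free marked-simplicially-enriched category on a single $1$-morphism and coincides with the scaled rigidification $\mathfrak C^{sc}\Delta[1]$ of the $1$-simplex. For $\mathscr I$ I would use that $N^\sharp\cI$ is a cofibrant object of the Lurie model structure on scaled simplicial sets that models $\mathscr I$ (cf.\ \cref{inoobicats}), and that the scaled rigidification $\mathfrak C^{sc}$ is a left Quillen functor $\sset^{sc}\to\sset^+_{(\infty,1)}\cat$, part of a Quillen equivalence \cite{LurieGoodwillie}; hence $\mathfrak C^{sc}N^\sharp\cI$ is a cofibrant model of $\mathscr I$.

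Next I would build the factorization of $f$. The canonical inclusion $\cC_1\hookrightarrow\cI$ is modelled by the monomorphism $\Delta[1]\hookrightarrow N^\sharp\cI$ of scaled simplicial sets, which is a cofibration there; since $\mathfrak C^{sc}$ is left Quillen, $\varphi\coloneqq\mathfrak C^{sc}(\Delta[1]\hookrightarrow N^\sharp\cI)\colon\mathrm{disc}_*\cC_1\hookrightarrow\mathfrak C^{sc}N^\sharp\cI$ is a cofibration in $\sset^+_{(\infty,1)}\cat$ modelling $f$. Taking $\psi$ to be the identity of $\mathfrak C^{sc}N^\sharp\cI$, the trivial factorization
\[
\begin{tikzcd}
f\colon &[-1cm] \mathrm{disc}_*\cC_1 \arrow[r, hook, "\varphi"] & \mathfrak C^{sc}N^\sharp\cI \arrow[r, "\simeq" swap, "\id"] & \mathfrak C^{sc}N^\sharp\cI
\end{tikzcd}
\]
is an instance of \eqref{lift}. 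Then \cref{MClemma} applies verbatim and shows that $F\colon A\to B$ is an $(\infty,1)$-equivalence if and only if the strict lifting problem of $F\colon\mathrm{disc}_*\cC_1\to\mathscr C$ along $\varphi$ admits a solution, which is precisely the displayed lifting problem.

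The main obstacle is bookkeeping rather than conceptual: one must make sure that $\mathfrak C^{sc}$ is genuinely a left Quillen functor $\sset^{sc}\to\sset^+_{(\infty,1)}\cat$, that $\mathfrak C^{sc}\Delta[1]\cong\mathrm{disc}_*\cC_1$ (and more generally $\mathfrak C^{sc}N^\sharp$ agrees with $\mathrm{disc}_*$ on categories up to the relevant weak equivalences), and that $\mathfrak C^{sc}N^\sharp\cI$ genuinely represents $\mathscr I$ in $(\infty,2)\mathscr Cat$, so that the abstract data $\mathscr C_1,\mathscr I,f$ of \cref{MClemma} are correctly matched by the concrete data $\mathrm{disc}_*\cC_1,\mathfrak C^{sc}N^\sharp\cI,\varphi$. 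Once this dictionary is in place, the rest is formal.
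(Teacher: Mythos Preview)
Your proposal is correct and follows exactly the approach the paper indicates: the paper simply states that this proposition is an application of \cref{MClemma}, and you have correctly filled in the required data by transporting the trivial factorization $\Delta[1]\hookrightarrow N^\sharp\cI\xrightarrow{\id}N^\sharp\cI$ from $\sset^{sc}$ along the left Quillen functor $\mathfrak C^{sc}$. The bookkeeping you flag (that $\mathfrak C^{sc}\Delta[1]\cong\mathrm{disc}_*\cC_1$ and that $\mathfrak C^{sc}N^\sharp\cI$ models $\mathscr I$) is exactly what the paper leaves implicit.
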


\begin{prop}
\label{enrichedlemma2}
A $1$-morphism $F\colon A\to B$ in a quasi-categorically-enriched category $\mathscr C$ is an $(\infty,1)$-equivalence if and only if there is a solution to the following lifting problem in $\sset_{(\infty,1)}\cat$:
\[\begin{tikzcd}
{\mathrm{disc}_*\cC_1}\arrow[r,"F"]\arrow[d,  "f" swap,hook ]
&\mathscr C\\
N_*\cE^{\mathrm{adj}}\arrow[ru,dashed]&\phantom{x}
\end{tikzcd}
\]
\end{prop}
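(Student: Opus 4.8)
The plan is to deduce the statement from \cref{MClemma}, applied to the Bergner--Lurie model structure $\cM=\sset_{(\infty,1)}\cat$ presenting $(\infty,2)$-categories (so $n+1=2$ in the notation of \cref{MClemma}). I would begin by fixing the relevant models. Since the nerve $N\colon\cat\to\sset_{(\infty,1)}$ is strong monoidal and right Quillen (it is right adjoint to the left Quillen functor $\tau_1$), change of enrichment along it gives a right Quillen functor $N_*\colon2\cat\to\sset_{(\infty,1)}\cat$ realizing the inclusion $2\mathscr Cat\hookrightarrow(\infty,2)\mathscr Cat$, and restricting along $1\cat\hookrightarrow2\cat$ it recovers the functor $\mathrm{disc}_*\colon\cat\to\sset_{(\infty,1)}\cat$ from \cref{nerveinclusion}. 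Consequently a model for the walking $1$-morphism $\mathscr C_1$ is $N_*\cC_1=\mathrm{disc}_*\cC_1=\mathfrak C\Delta[1]$, which is cofibrant, being the free $\sset_{(\infty,1)}$-category on a single generating arrow.

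Next I would argue that $N_*\cE^{\mathrm{adj}}$ is a cofibrant model for the walking isomorphism $\mathscr I$ regarded as an $(\infty,2)$-category, so that the factorization \eqref{lift} required by \cref{MClemma} is the trivial one. On the one hand, the canonical $2$-functor $\cE^{\mathrm{adj}}\to\cI$ -- which sends $f,g$ to $f,g$ and the added $2$-cells $\eta,\varepsilon$ to identities, the swallowtail relations being automatic in $\cI$ -- is bijective on objects and hom-wise the collapse $\chaos\mathbb{N}\to\cC_0$; applying $N_*$, we obtain a map $N_*\cE^{\mathrm{adj}}\to N_*\cI$ which is bijective on objects and hom-wise the trivial Kan fibration $N(\chaos\mathbb{N})\to\Delta[0]$, hence a trivial fibration in $\sset_{(\infty,1)}\cat$. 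Since $\cI\simeq\cC_0$ by \cref{Icontractible}, the object $N_*\cI$ models $\mathscr I$, and therefore so does $N_*\cE^{\mathrm{adj}}$ (in accordance with \cref{Eadjcoh}). On the other hand, by \cref{constP,constE,defEadj} the inclusion $\cC_1\hookrightarrow\cE^{\mathrm{adj}}$ exhibits $\cE^{\mathrm{adj}}$ as built from the walking $1$-morphism by a finite sequence of pushouts along boundary inclusions $\partial\cC_k\hookrightarrow\cC_k$, which -- together with the cofibrancy of $\mathrm{disc}_*\cC_1$ -- should force $N_*\cE^{\mathrm{adj}}$ to be cofibrant and $\mathrm{disc}_*\cC_1\to N_*\cE^{\mathrm{adj}}$ to be a cofibration. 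Granting this, the map $\mathrm{disc}_*\cC_1\hookrightarrow N_*\cE^{\mathrm{adj}}$ models $f\colon\mathscr C_1\to\mathscr I$ and is already a cofibration, so \cref{MClemma} applies and yields the desired lifting characterization, exactly as \cref{enrichedlemma,enrichedlemma2} are obtained from \cref{MClemma}.

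The step I expect to be the main obstacle is precisely the cofibrancy of $N_*\cE^{\mathrm{adj}}$ (equivalently, that $\mathrm{disc}_*\cC_1\to N_*\cE^{\mathrm{adj}}$ is a cofibration of simplicial categories): because $N_*$ is a \emph{right} adjoint it does not obviously preserve the cofibrations and pushouts witnessing $\cC_1\hookrightarrow\cE^{\mathrm{adj}}$ as a cellular map in $2\cat$, so one must instead verify by hand that $N_*$ sends each boundary inclusion $\partial\cC_k\hookrightarrow\cC_k$, and each of the pushout squares of \cref{constP,constE,defEadj}, to a cofibration, respectively a pushout, in $\sset_{(\infty,1)}\cat$. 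It is here that the use of the walking \emph{adjoint} equivalence $\cE^{\mathrm{adj}}$ rather than $\cE$ is essential: the hom-categories of $\cE^{\mathrm{adj}}$ are the transparent $\chaos\mathbb{N}$, with contractible nerves, so that $N_*\cE^{\mathrm{adj}}$ can be computed explicitly, whereas those of $\cE$ cannot be described. Should the direct check prove delicate, an alternative is to take a genuine cofibrant replacement $\widetilde{\mathscr I}$ of $N_*\cE^{\mathrm{adj}}$ relative to $\mathrm{disc}_*\cC_1$, apply \cref{MClemma} with that model of $\mathscr I$, and then transfer the resulting strict lifting problem along the weak equivalence $\widetilde{\mathscr I}\xrightarrow{\simeq}N_*\cE^{\mathrm{adj}}$, using that $N_*\cE^{\mathrm{adj}}$ is fibrant ($N_*$ being right Quillen) and that $\mathscr C$ is fibrant. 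Once cofibrancy is secured, the remainder is the bookkeeping internal to \cref{MClemma}.
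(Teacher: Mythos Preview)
Your plan is exactly the paper's: the paper states both versions of \cref{enrichedlemma2} as ``applications of \cref{MClemma}'' and gives no further argument, so on the main line you match the intended proof. You are also right that the only nontrivial hypothesis of \cref{MClemma} to verify here is that $\mathrm{disc}_*\cC_1\hookrightarrow N_*\cE^{\mathrm{adj}}$ is a cofibration in $\sset_{(\infty,1)}\cat$; the weak equivalence $N_*\cE^{\mathrm{adj}}\xrightarrow{\simeq}\mathrm{disc}_*\cI$ is clear, as you say, since it is the identity on objects and hom-wise the Joyal equivalence $N(\chaos\mathbb N)\to\Delta[0]$.

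Your proposed fallback, however, does not close the gap. Suppose you factor $\mathrm{disc}_*\cC_1\to N_*\cE^{\mathrm{adj}}$ as a cofibration $\mathrm{disc}_*\cC_1\hookrightarrow\widetilde{\mathscr I}$ followed by an acyclic fibration $p\colon\widetilde{\mathscr I}\twoheadrightarrow N_*\cE^{\mathrm{adj}}$. The direction ``lift against $N_*\cE^{\mathrm{adj}}$ $\Rightarrow$ lift against $\widetilde{\mathscr I}$'' is fine (precompose with $p$), but the converse is not: given a strict lift $\widetilde{\mathscr I}\to\mathscr C$, producing a strict lift $N_*\cE^{\mathrm{adj}}\to\mathscr C$ under $\mathrm{disc}_*\cC_1$ amounts to finding a section of $p$ under $\mathrm{disc}_*\cC_1$, i.e.\ solving the lifting problem of $\mathrm{disc}_*\cC_1\to N_*\cE^{\mathrm{adj}}$ against the acyclic fibration $p$. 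That lift exists precisely when $\mathrm{disc}_*\cC_1\to N_*\cE^{\mathrm{adj}}$ is a cofibration, which is exactly the hypothesis you were trying to avoid. Fibrancy of $N_*\cE^{\mathrm{adj}}$ and of $\mathscr C$ does not help here; what is missing is cofibrancy of the source of the desired lift.

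So the cofibrancy verification cannot be sidestepped. The cellular argument you sketch is the right idea, but note that $N_*$ is a right adjoint, so it need not preserve the pushouts in \cref{constP,constE,defEadj}; one should instead check directly that $N_*\cE^{\mathrm{adj}}$ is a simplicial computad (free in each simplicial degree with degeneracies preserving generators), which is the cofibrancy criterion in Bergner--Lurie model structures. This is the content behind \cref{counterexample}, where the paper contrasts $N_*\cI$ (not cofibrant, strict lift fails) with $N_*\cE^{\mathrm{adj}}$ (intended to be cofibrant, strict lift succeeds).
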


\begin{ex}
\label{counterexample}
We want to stress that having a cofibration $\varphi$ in the factorization ot type \eqref{lift} is indeed crucial. 
Consider the $1$-morphism $f\colon a\to b$ in the $2$-category $\cE$. We know by construction that $f$ is an equivalence in the $2$-category $\cE$. In particular, it is also an $(\infty,1)$-equivalence in the quasi-categorically enriched category $N_*\cE$. We observe that the lifting problem up to homotopy admits a solution 
in the model category $\sset_{(\infty,1)}\cat$. 
\[\begin{tikzcd}
{N_*\cC_1}\arrow[r,"F"]\arrow[d,  "f" swap ]\arrow[dr, phantom, "\simeq", very near start, yshift=0.0cm, xshift=0.2cm]&N_*\cE\\
N_*\cI\arrow[ru,dashed] &\phantom{x}
\end{tikzcd}\]
The same lifting problem regarded strictly in the category $\sset\cat$ -- as opposed to up-to-homotopy in the model category $\sset_{(\infty,1)}\cat$ -- does \emph{not} lift. This can be seen by observing that all maps $N_*\cI\to N_*\cE$
are constant.
However, if one considers the canonical map $N_*\cE^{\mathrm{adj}}\to N_*\cI$, the lifting problem
\[\begin{tikzcd}
{N_*\cC_1}\arrow[r,"F"]\arrow[d,  "f" swap,hook ]&N_*\cE\\
N_*\cE^{\mathrm{adj}}\arrow[ru,dashed]
\end{tikzcd}\]
admits a strict solution in $\sset\cat$.
\end{ex}

\subsection{Equivalences in an $(\infty,n)$-category presented by a model}

We discuss the notion of $(\infty,n)$-equivalence in $(\infty,n+1)$-categories presented by saturated $(n+1)$-complicial sets, saturated $(n+1)$-comical sets and complete Segal $\Theta_{n+1}$-spaces.

\subsubsection{Equivalence in a saturated $(n+1)$-complicial set} 
\label{ncomplsets}

Consider the Verity model structure on marked simplicial sets $\msset_{(\infty,n+1)}$ for saturated $(n+1)$-complicial sets from \cite[Theorem~1.25]{or}.
A model for the inclusion $1\mathscr Cat\hookrightarrow(\infty,1)\mathscr Cat\hookrightarrow(\infty,n+1)\mathscr Cat$ from \cref{nerveinclusion} is implemented by the homotopical functor $N^\natural\colon\cat\to\msset_{(\infty,n+1)}$, which marks the $1$-simplices that are witnessed by an isomorphism and all simplices in dimension $2$ or higher in the nerve of a category. Hence, a model for the walking $1$-morphism $\mathscr C_1$ is $\Delta[1]$, the standard $1$-simplex minimally marked and a model for $\mathscr I$ is $N^\sharp\cI$, the simplicial set $N\cI$ maximally marked. Relevant factorizations of $f\colon\Delta[1]\to N^\sharp\cI$ of the form \eqref{lift}
include the factorization
\[
\begin{tikzcd}
    f \colon &[-1cm] {\Delta[1]} \arrow[r, hook,"{}"] & {N^\sharp\cI} \arrow[r, "\simeq" swap, "{\psi}"] & N^\sharp\cI,
    \end{tikzcd}
\]
and the factorization
\[
\begin{tikzcd}
    f \colon &[-1cm] {\Delta[1]} \arrow[r, hook,"{}"] & {\Delta[1]_t} \arrow[r, "\simeq" swap, "{\psi}"] & N^\sharp\cI,
    \end{tikzcd}
\]
and the factorization 
\[
\begin{tikzcd}
    f \colon &[-1cm] {\Delta[1]} \arrow[r, hook,"{}"] & {\Delta[3]_{\mathrm{eq}}} \arrow[r, "\simeq" swap, "{\psi}"] & N^\sharp\cI,
    \end{tikzcd}
\]

The following is an application of \cref{MClemma}.
\begin{prop}
\label{incomplicial}
A $1$-morphism $F\colon A\to B$ in a saturated $(n+1)$-complicial set $\mathscr C$ is an $(\infty,n-1)$-equivalence if and only if there is a solution to either -- hence all -- of the following (strict!) lifting problems in the category $\msset$
\[\begin{tikzcd}
{\Delta[1]}\arrow[r,"F"]\arrow[d,  "f" swap,hook ]&\mathscr C\\
N\cI^\sharp\arrow[ru,dashed]
\end{tikzcd}
\quad\text{and}\quad
\begin{tikzcd}
{\Delta[1]}\arrow[r,"F"]\arrow[d,  "f" swap,hook ]&\mathscr C\\
\Delta[1]_t\arrow[ru,dashed]
\end{tikzcd}
\quad\text{and}\quad
\begin{tikzcd}
{\Delta[1]}\arrow[r,"F"]\arrow[d,  "f" swap,hook ]&\mathscr C\\
\Delta[3]_{\mathrm{eq}}\arrow[ru,dashed]
\end{tikzcd}
\]
\end{prop}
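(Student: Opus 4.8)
The plan is to obtain the three asserted equivalences as three separate applications of \cref{MClemma} to the model category $\cM=\msset_{(\infty,n+1)}$, one for each of the three factorizations of $f\colon\Delta[1]\hookrightarrow N^\sharp\cI$ recorded just above the statement. The first thing to verify is that the hypotheses of \cref{MClemma} hold. Since the cofibrations of the Verity model structure are exactly the monomorphisms of underlying simplicial sets \cite{or}, every marked simplicial set is cofibrant; in particular the chosen models $\mathscr C_1=\Delta[1]$ and $\mathscr I=N^\sharp\cI$ are cofibrant, and each of the three maps $\Delta[1]\hookrightarrow N^\sharp\cI$, $\Delta[1]\hookrightarrow\Delta[1]_t$, $\Delta[1]\hookrightarrow\Delta[3]_{\mathrm{eq}}$ is a monomorphism, hence a cofibration. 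So it remains only to check that the second leg of each factorization is a weak equivalence in $\msset_{(\infty,n+1)}$.

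For the identity map $N^\sharp\cI\to N^\sharp\cI$ this is immediate. For the maps $\Delta[1]_t\to N^\sharp\cI$ and $\Delta[3]_{\mathrm{eq}}\to N^\sharp\cI$ appearing in the second and third factorizations, I would argue by two-out-of-three: it suffices to know that all three of $\Delta[1]_t$, $\Delta[3]_{\mathrm{eq}}$ and $N^\sharp\cI$ are weakly contractible in $\msset_{(\infty,n+1)}$. For $N^\sharp\cI$ this holds because $\cI$ is a contractible $1$-category (\cref{Icontractible}) and the functor $N^\natural$ is homotopical and sends equivalences of strict categories to weak equivalences. For $\Delta[1]_t$ and $\Delta[3]_{\mathrm{eq}}$ it follows from the fact that these are the standard ``walking marked arrow'' and ``walking equivalence'' building blocks of the saturated complicial model structure, whose contractibility is part of the thinness and saturation axioms \cite{or,VerityComplicialI}. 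Pinning down these references precisely -- in particular making sure the maps to $N^\sharp\cI$ used here are literally compatible with the contractions witnessing contractibility -- is the \emph{step that takes the most care}; everything else is purely formal.

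With the hypotheses in place, \cref{MClemma} applied to each of the three factorizations yields that $F$ is an $(\infty,n)$-equivalence if and only if the corresponding strict lifting problem in $\msset$ admits a solution. Since all three lifting conditions are thereby identified with the same property of $F$, the ``either -- hence all'' clause follows immediately. As a final remark, applying a homotopical, cofibration-preserving model $\Sigma^{k-1}$ of the suspension to these same factorizations and invoking \cref{MClemma2,suspensionrmk} gives, with no extra work, the analogous strict-lifting criteria for when a $k$-morphism of a saturated $(n+1)$-complicial set is an $(\infty,n+1-k)$-equivalence.
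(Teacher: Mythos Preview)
Your proposal is correct and follows exactly the approach the paper takes: the paper's proof is the single sentence ``The following is an application of \cref{MClemma},'' relying on the three factorizations displayed just before the statement, and you have simply spelled out the routine verifications (cofibrancy, that the second legs are weak equivalences) that the paper leaves implicit. Your two-out-of-three reduction to contractibility of $\Delta[1]_t$, $\Delta[3]_{\mathrm{eq}}$, and $N^\sharp\cI$ is a clean way to handle the weak-equivalence checks, and your caveat about pinning down the references for contractibility is appropriate but not a gap---these are standard facts about the saturated complicial model structure.
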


Let $k>0$. A model for the suspension functor $\mathfrak S^{k-1}\colon(\infty,1)\mathscr Cat\to(\infty,k)\mathscr Cat$ is given by the homotopical functor $\Sigma^{k-1}\colon\msset_{(\infty,1)}\to\msset_{(\infty,k)}$ obtained by iterating the construction from \cite[Lemma~2.7]{ORfundamentalpushouts}. Hence, a model for $\mathfrak S^{k-1}\mathscr C_1$ is $\Sigma^{k-1}\Delta[1]$, and 
a model for $\mathfrak S^{k-1}\mathscr I$ is $\Sigma^{k-1}N^\sharp\cI$.
A factorization of $f_k\colon\Sigma^{k-1}\Delta[1]\to \Sigma^{k-1}N^\sharp\cI$ of the form \eqref{lift}
is the trivial factorization
\[
\begin{tikzcd}
    f_k \colon &[-1cm] {\Sigma^{k-1}\Delta[1]} \arrow[r, hook,"{}"] & {\Sigma^{k-1}N^\sharp\cI} \arrow[r, "\simeq" swap, "{\psi}"] & \Sigma^{k-1}N^\sharp\cI.
    \end{tikzcd}
\]

The following then is again an application of \cref{MClemma2}.
See also \cite[Corollary 3.2.11]{Loubaton2} for other characterizations of $(\infty,n+1-k)$-equivalences.

\begin{prop}
A $k$-morphism $F\colon A\to B$ in a saturated $(n+1)$-complicial set $\mathscr C$ is an $(\infty,n+1-k)$-equivalence if and only if there is a solution to the following (strict!) lifting problem in the category $\msset$
\[\begin{tikzcd}
{\Sigma^{k-1}\Delta[1]}\arrow[r,"F"]\arrow[d,  "f" swap,hook ]&\mathscr C\\
\Sigma^{k-1}N^\sharp\cI\arrow[ru,dashed]
\end{tikzcd}
\]
\end{prop}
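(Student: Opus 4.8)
The plan is to apply \cref{MClemma2} with $\cM=\msset_{(\infty,n+1)}$, the Verity model structure for saturated $(n+1)$-complicial sets from \cite[Theorem~1.25]{or}, which is a model category for $(\infty,n+1)$-categories in the sense of \cite[Definition~15.4]{BarwickSchommerPries}. As recorded in \cref{ncomplsets}, I would take the explicit implementation $\Sigma^{k-1}\colon\msset_{(\infty,1)}\to\msset_{(\infty,k)}$ of the suspension $\mathfrak S^{k-1}$ obtained by iterating the construction of \cite[Lemma~2.7]{ORfundamentalpushouts}; this is homotopical and cofibration-preserving (indeed an iterate of the left adjoint $\Sigma$ of the Quillen pair $\Sigma\dashv\Hom$), and $\Sigma^{k-1}\Delta[1]$, respectively $\Sigma^{k-1}N^\sharp\cI$, model $\mathfrak S^{k-1}\mathscr C_1$, respectively $\mathfrak S^{k-1}\mathscr I$. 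It then suffices to check the two hypotheses of \cref{MClemma2}: cofibrancy of these objects, and a factorization of $\mathfrak S^{k-1}f$ of the form \eqref{lift2}.

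For cofibrancy, I would use that cofibrations in $\msset_{(\infty,n+1)}$ are exactly the monomorphisms of marked simplicial sets, so every object --- in particular $\Sigma^{k-1}\Delta[1]$ and $\Sigma^{k-1}N^\sharp\cI$ --- is cofibrant. For the factorization, I would use the trivial one recorded in \cref{ncomplsets}: factor $\mathfrak S^{k-1}f$ as $\varphi_k\coloneqq\Sigma^{k-1}f\colon\Sigma^{k-1}\Delta[1]\to\Sigma^{k-1}N^\sharp\cI$ followed by $\psi_k\coloneqq\id_{\Sigma^{k-1}N^\sharp\cI}$. The map $\psi_k$ is a weak equivalence, and $\varphi_k$ is a cofibration: indeed $f\colon\Delta[1]\to N^\sharp\cI$ is a monomorphism of marked simplicial sets (injective on underlying simplicial sets and marking-preserving), $\Sigma^{k-1}$ preserves monomorphisms, and cofibrations in $\msset_{(\infty,n+1)}$ are exactly the monomorphisms. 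Equivalently, this is the instance of \eqref{lift2} produced by \cref{suspensionrmk} from the trivial factorization of $f$ used for \cref{incomplicial}.

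With both hypotheses verified, \cref{MClemma2} applies verbatim and identifies ``$F$ is an $(\infty,n+1-k)$-equivalence'' with the solvability of the strict lifting problem against $\varphi_k=\Sigma^{k-1}f$, which is precisely the displayed square. I do not expect a genuine obstacle: the statement is a routine specialization of \cref{MClemma2}, and the only points deserving a word of justification --- that $\Sigma^{k-1}$ is homotopical, cofibration-preserving, and correctly models $\mathfrak S^{k-1}$ on $\mathscr C_1$ and $\mathscr I$, and that $f\colon\Delta[1]\to N^\sharp\cI$ is a monomorphism of marked simplicial sets --- are immediate from the cited results.
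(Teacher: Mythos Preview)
Your proposal is correct and follows essentially the same approach as the paper: the paper simply records the trivial factorization of $\Sigma^{k-1}f$ and states that the result is an application of \cref{MClemma2}, and your write-up spells out exactly the verifications (cofibrancy of the objects, and that the trivial factorization has the required form via \cref{suspensionrmk}) that make this application go through. One minor remark: rather than asserting that cofibrations in $\msset_{(\infty,n+1)}$ are exactly the monomorphisms, it is cleaner to argue as you already do parenthetically, namely that $\Sigma$ is left Quillen and hence $\Sigma^{k-1}$ preserves cofibrations, since $f\colon\Delta[1]\hookrightarrow N^\sharp\cI$ is already known to be a cofibration from \cref{incomplicial}.
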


\subsubsection{Equivalence in a saturated $(n+1)$-comical set}

\label{ncomical}

Consider the Doherty--Kapulkin--Maehara model structure on marked cubical sets for saturated $(n+1)$-comical sets
\cite[Theorem 2.7]{DKM}.
A model for the inclusion $1\mathscr Cat\hookrightarrow(\infty,1)\mathscr Cat\hookrightarrow(\infty,n+1)\mathscr Cat$ from \cref{nerveinclusion} is implemented by the homotopical functor $N^\natural\colon\cat\to mc\set_{(\infty,n+1)}$, which marks the $1$-cubes that are witnessed by an isomorphism and all cubes in dimension $2$ or higher in the cubical nerve of a category. Hence, a model for the walking $1$-morphism $\mathscr C_1$ is $\square[1]$, the standard $1$-cube minimally marked and a model for $\mathscr I$ is $N^\sharp\cI$, the cubical nerve $N\cI$ of $\cI$ maximally marked. Relevant factorizations of $f\colon\square[1]\to N^\sharp\cI$ of the form \eqref{lift}
include the factorization
\[
\begin{tikzcd}
    f \colon &[-1cm] {\square[1]} \arrow[r, hook,"{}"] & {N^\sharp\cI} \arrow[r, "\simeq" swap, "{\psi}"] & N^\sharp\cI,
    \end{tikzcd}
\]
and the factorization
\[
\begin{tikzcd}
    f \colon &[-1cm] {\square[1]} \arrow[r, hook,"{}"] & {\widetilde\square[1]} \arrow[r, "\simeq" swap, "{\psi}"] & N^\sharp\cI,
    \end{tikzcd}
\]
where $\widetilde\square[1]$ is the marked $1$-cube from \cite[\textsection1]{DKM},
and the factorization
\[
\begin{tikzcd}
    f \colon &[-1cm] {\square[1]} \arrow[r, hook,"{}"] & {L_{i,j}} \arrow[r, "\simeq" swap, "{\psi}"] & N^\sharp\cI,
    \end{tikzcd}
\]
where $L_{i,j}$ is the cubical set from \cite[\textsection2]{DKM}.

The following is an application of \cref{MClemma2}.
\begin{prop}
A $1$-morphism $F\colon A\to B$ in a saturated $(n+1)$-comical set $\mathscr C$ is an $(\infty,n)$-equivalence if and only if there is a solution to either -- hence all -- of the following (strict!) lifting problems in the category $mc\set$
\[\begin{tikzcd}
{\square[1]}\arrow[r,"F"]\arrow[d,  "f" swap,hook ]&\mathscr C\\
N^\sharp\cI\arrow[ru,dashed]
\end{tikzcd}
\quad\text{and}\quad
\begin{tikzcd}
{\square[1]}\arrow[r,"F"]\arrow[d,  "f" swap,hook ]&\mathscr C\\
{\widetilde\square[1]}\arrow[ru,dashed]
\end{tikzcd}
\quad\text{and}\quad
\begin{tikzcd}
{\square[1]}\arrow[r,"F"]\arrow[d,  "f" swap,hook ]&\mathscr C\\
L_{i,j}\arrow[ru,dashed]
\end{tikzcd}
\]
\end{prop}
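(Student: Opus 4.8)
The plan is to derive the statement from \cref{MClemma} applied to the model category $\cM=mc\set_{(\infty,n+1)}$ for saturated $(n+1)$-comical sets, feeding it in turn each of the three factorizations of $f\colon\square[1]\to N^\sharp\cI$ displayed above. As recalled, $\square[1]$ is a model for the walking $1$-morphism $\mathscr C_1$ and $N^\sharp\cI$ is a model for the walking isomorphism $\mathscr I$, so the only thing that remains is to check that the hypotheses of \cref{MClemma} are met by each of the three factorizations: namely that $\square[1]$ and $N^\sharp\cI$ are cofibrant, that the left-hand map is a cofibration, and that the right-hand map is a weak equivalence.

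First I would dispatch the cofibrancy and cofibration conditions. Since the cofibrations of $mc\set_{(\infty,n+1)}$ are the monomorphisms of marked cubical sets, every object -- in particular $\square[1]$ and $N^\sharp\cI$ -- is cofibrant, and each of the three maps $\square[1]\hookrightarrow N^\sharp\cI$, $\square[1]\hookrightarrow\widetilde\square[1]$, $\square[1]\hookrightarrow L_{i,j}$ is a cofibration, being a monomorphism of marked cubical sets (the middle one is an isomorphism on underlying cubical sets that merely enlarges the marking).

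Next I would verify the three weak equivalences. The first factorization has the identity as its right-hand map, so there is nothing to check. For the third, $L_{i,j}\to N^\sharp\cI$ is to be recognized as a weak equivalence from the explicit combinatorics of \cite[\textsection2]{DKM}: $L_{i,j}$ is the cubical counterpart of $\Delta[3]_{\mathrm{eq}}$ from \cref{incomplicial}, and I would exhibit the map as a finite composite of pushouts of the generating (marked) comical anodyne extensions. For the second, $\widetilde\square[1]\to N^\sharp\cI$ is the cubical counterpart of the weak equivalence $\Delta[1]_t\to N^\sharp\cI$ used in \cref{incomplicial}; I would again present it as a composite of comical anodyne extensions, collapsing the walking marked $1$-cube onto the walking isomorphism, or alternatively transport the complicial statement along the comparison between the comical and complicial model structures.

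With all hypotheses verified, \cref{MClemma} applies to each of the three factorizations and gives, for each, that $F$ is an $(\infty,n)$-equivalence in $\mathscr C$ if and only if the corresponding strict lifting problem in $mc\set$ admits a solution; since the three right-hand conditions are thereby each equivalent to the same property of $F$, the solvability of any one of the three strict lifting problems forces the solvability of the other two, which is the ``either -- hence all'' clause. The main obstacle is the verification that $\widetilde\square[1]\to N^\sharp\cI$ and $L_{i,j}\to N^\sharp\cI$ are weak equivalences: this is the only genuinely comical-model-specific input and requires unwinding the definitions of the comical anodyne maps and of the cubical nerve of $\cI$, whereas everything else is a formal application of \cref{MClemma}.
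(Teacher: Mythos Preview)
Your proposal is correct and follows the same approach as the paper, which simply states that the result is an application of \cref{MClemma2} (equivalently \cref{MClemma}, since here $k=1$) to the three displayed factorizations; you have merely unpacked the hypothesis-checking that the paper leaves implicit. The only point worth flagging is that your claim that the cofibrations of $mc\set_{(\infty,n+1)}$ are the monomorphisms, and your sketches for why $\widetilde\square[1]\to N^\sharp\cI$ and $L_{i,j}\to N^\sharp\cI$ are weak equivalences, would need to be sourced from \cite{DKM} rather than argued from scratch---but the paper itself does not spell these out either, treating the factorizations as given.
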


\subsubsection{Equivalence in a complete Segal $\Theta_{n+1}$-space}
\label{cstheta}

Consider the Rezk model structure on $\Theta_{n+1}$-spaces for complete Segal $\Theta_{n+1}$-spaces from \cite[\textsection 11]{rezkTheta}.

A model for the inclusion $1\mathscr Cat\hookrightarrow(\infty,1)\mathscr Cat\hookrightarrow(\infty,n+1)\mathscr Cat$ from \cref{nerveinclusion} is implemented by the homotopical functor \[p^*N^{\textrm{Rezk}}\colon\cat\xrightarrow{N^{\textrm{Rezk}}}\spsh{\Theta_1}_{(\infty,1)}\xrightarrow{p^*}\spsh{\Theta_{n+1}}_{(\infty,n+1)},\]
where $N^{\textrm{Rezk}}$ denotes the Rezk nerve from \cite[\textsection 3.5]{rezk} -- there referred to as the \emph{classifying diagram} -- which is a right Quillen functor, and hence homotopical, and $p\colon\Theta_{n+1}\to\Theta_1=\Delta$ denotes the canonical projection, so that the functor $p^*\colon\spsh{\Theta_{1}}_{(\infty,1)}\to\spsh{\Theta_{n+1}}_{(\infty,n+1)}$ is a left Quillen functor, and hence is homotopical.
For $k>0$, a model for the suspension functor $\mathfrak S^{k-1}\colon(\infty,1)\mathscr Cat\to(\infty,k)\mathscr Cat$ is given by the homotopical functor $\Sigma^{k-1}\colon\spsh{\Theta_1}_{(\infty,1)}\to\spsh{\Theta_{n+1}}_{(\infty,n+1)}$ obtained by iterating the construction $V[1]$ from \cite[Proposition~4.6]{rezkTheta}. Hence, a model for $\mathfrak S^{k-1}\mathscr C_1$ is the representable object
$\Sigma^{k-1}p^* N^{\textrm{Rezk}}\cC_1\cong \Theta_{n}[\cC_k]$ and a model for $\mathfrak S^{k-1}\mathscr I$ is $\Sigma^{k-1}p^* N^{\textrm{Rezk}}\cI$.

A factorization of $f\colon\Delta[1]\to \Sigma^{k-1}N^{\textrm{Rezk}}\cI$ of the form \eqref{lift}
is the trivial factorization
\[
\begin{tikzcd}
    f \colon &[-1cm] {\Sigma^{k-1}N^{\textrm{Rezk}}\cC_1} \arrow[r, hook,"{}"] & {\Sigma^{k-1}N^{\textrm{Rezk}}\cI} \arrow[r, "\simeq" swap, "{\psi}"] & \Sigma^{k-1}p^* N^{\textrm{Rezk}}\cI.
    \end{tikzcd}
\]

The following is an application of \cref{MClemma2}. See also \cite[\textsection 11.14]{rezkTheta} for other characterizations of $(\infty,n+1-k)$-equivalences.

\begin{prop}
A $k$-morphism $F\colon A\to B$ in a complete Segal $\Theta_{n+1}$-space $\mathscr C$ is an $(\infty,n+1-k)$-equivalence if and only if there is a solution to the following (strict!) lifting problem in the category $\spsh{\Theta_{n+1}}$
\[\begin{tikzcd}
\Theta_{n+1}[\cC_k]\arrow[r,"F"]\arrow[d,  "f" swap,hook ]&\mathscr C\\
\Sigma^{k-1}p^* N^{\textrm{Rezk}}\cI\arrow[ru,dashed]
\end{tikzcd}
\]
\end{prop}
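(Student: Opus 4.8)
The plan is to deduce the statement as a direct application of \cref{MClemma2}, taking for the ambient model category $\cM$ the Rezk model structure $\spsh{\Theta_{n+1}}_{(\infty,n+1)}$ on complete Segal $\Theta_{n+1}$-spaces, and for the factorization of $\mathfrak S^{k-1}f$ the one displayed just above the statement. So the work consists entirely of checking the hypotheses of \cref{MClemma2} in this model.

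First I would recall that $\spsh{\Theta_{n+1}}_{(\infty,n+1)}$ is a model category for $(\infty,n+1)$-categories in the sense of \cite[Definition 15.4]{BarwickSchommerPries}: its underlying homotopy theory is that of $(\infty,n+1)\mathscr Cat$ by \cite[\textsection 11]{rezkTheta} together with the comparison results cited earlier, and the homotopical functors $p^*N^{\textrm{Rezk}}$ and $\Sigma^{k-1}$ (built by iterating the functor $V[1]$ of \cite[Proposition~4.6]{rezkTheta}) implement the inclusion $1\mathscr Cat\hookrightarrow(\infty,n+1)\mathscr Cat$ of \cref{nerveinclusion} and the iterated suspension $\mathfrak S^{k-1}$ of \eqref{totalsuspension}. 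In particular $\mathfrak S^{k-1}\mathscr C_1$ is modelled by $\Theta_{n+1}[\cC_k]\cong\Sigma^{k-1}p^*N^{\textrm{Rezk}}\cC_1$ and $\mathfrak S^{k-1}\mathscr I$ by $\Sigma^{k-1}p^*N^{\textrm{Rezk}}\cI$. Since the Rezk model structure is a left Bousfield localization of the Reedy model structure on simplicial presheaves on $\Theta_{n+1}$, its cofibrations are precisely the monomorphisms and every object is cofibrant; in particular $\mathfrak S^{k-1}\mathscr C_1$ and $\mathfrak S^{k-1}\mathscr I$ are cofibrant, as \cref{MClemma2} requires.

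Next I would check that the displayed factorization is of the type \eqref{lift2}, i.e., that $\varphi_k\colon\Sigma^{k-1}p^*N^{\textrm{Rezk}}\cC_1\to\Sigma^{k-1}p^*N^{\textrm{Rezk}}\cI$ is a cofibration, the comparison $\psi_k$ being a weak equivalence by construction (indeed the identity, once its target is identified with the chosen model of $\mathfrak S^{k-1}\mathscr I$). As cofibrations here are monomorphisms, it suffices to see that $\Sigma^{k-1}p^*N^{\textrm{Rezk}}$ sends the inclusion of categories $f\colon\cC_1\hookrightarrow\cI$ — which is injective on objects and faithful, though not full — to a monomorphism. Level-wise over $[m]\in\Delta$ the Rezk nerve $N^{\textrm{Rezk}}\cC$ is the ordinary nerve of the maximal subgroupoid of the functor category $\cC^{[m]}$, so $N^{\textrm{Rezk}}f$ is obtained by applying the nerve to the functors $\mathrm{iso}(\cC_1^{[m]})\to\mathrm{iso}(\cI^{[m]})$, which are again injective on objects and faithful and hence have monomorphic nerves; thus $N^{\textrm{Rezk}}f$ is a monomorphism of simplicial presheaves on $\Theta_1=\Delta$. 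Reindexing along $p\colon\Theta_{n+1}\to\Theta_1$ preserves monomorphisms, and $\Sigma^{k-1}$, being built from the left Quillen functor $V[1]$, preserves cofibrations and in particular monomorphisms. Hence $\varphi_k$ is a monomorphism, all hypotheses of \cref{MClemma2} are met, and its conclusion is exactly the statement to be proved.

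The main obstacle — though it is only a mild one — is this verification that $\varphi_k$ is a cofibration: everything else is bookkeeping about which model and which functors realize the abstract objects $\mathfrak S^{k-1}\mathscr C_1$ and $\mathfrak S^{k-1}\mathscr I$ of \cref{MClemma2}. That verification rests on two elementary facts: that the ordinary nerve of an injective-on-objects faithful functor of categories is a monomorphism, and that both $p^*$ and the left Quillen functor $\Sigma^{k-1}$ preserve monomorphisms. One could alternatively bypass \cref{MClemma2} by matching the strict lifting problem directly against the characterisation of $(\infty,n+1-k)$-equivalences in \cite[\textsection 11.14]{rezkTheta}, but routing the argument through \cref{MClemma2} keeps it uniform with the other models treated in this section.
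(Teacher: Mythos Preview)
Your proposal is correct and follows exactly the approach taken in the paper: the proposition is stated there as an application of \cref{MClemma2} with the trivial factorization displayed just before, and the paper does not spell out further details. If anything, your verification that $\varphi_k$ is a monomorphism (via the levelwise description of the Rezk nerve and preservation of monomorphisms under $p^*$ and $\Sigma^{k-1}$) is more explicit than what the paper records, which simply asserts the factorization is of the required form.
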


\bibliographystyle{amsalpha}
\bibliography{ref}

\end{document}

%%%
%%%